\newtheorem{thm}{Theorem}
\numberwithin{thm}{section}
\newtheorem{prop}[thm]{Proposition}
\newtheorem{lemma}[thm]{Lemma}
\newtheorem{cor}[thm]{Corollary}
\newtheorem{question}[thm]{Question}
\newtheorem*{thmg}{Theorem \ref{thm:graphs}}
\newtheorem*{thmnz}{Theorem \ref{thm:nonzero}}
\newtheorem*{thmm2}{Theorem \ref{thm:main2}}
\newtheorem*{lemmnf}{Lemma \ref{lem:multinomial-form}}
\newtheorem*{cor2}{Corollaries \ref{cor:lower-bound} and \ref{cor:upper-bound}}
\newcommand{\nQ}{NQ}
\theoremstyle{definition}
\newtheorem{defn}[thm]{Definition}
\newtheorem{remark}[thm]{Remark}
\newtheorem{example}[thm]{Example}
\newtheorem{notation}[thm]{Notation}
\newtheorem*{defnmc}{Definition \ref{defn:multicolored}}
\definecolor{mygreen}{rgb}{0.2, 0.6, 0.2}
\definecolor{y1}{rgb}{0.8, 0.6, 0}
\definecolor{y2}{rgb}{0.5, 0, 1}
\newcommand{\phihue}{\varphi^{\mathrm{hue}}}
\newcommand{\Mbar}{\overline{M}}
\newcommand{\Moduli}{\overline{M}} 
\newcommand{\moduli}{M} 
\newcommand{\Fix}{\mathrm{Fix}}
\newcommand{\OSP}{\mathrm{OSP}}
\newcommand{\Trees}{\mathrm{Trees}}
\newcommand{\DR}{\mathrm{DR}}
\newcommand{\invOSP}{\varphi^{\mathrm{OSP}}}
\newcommand{\invheavy}{\varphi^{\updownarrows}}
\newcommand{\tree}{\mathrm{Tree}}
\newcommand{\millipedes}{\mathrm{Mill}}
\newcommand{\PP}{\mathbb{P}}
\newcommand{\ACO}{\mathrm{ACO}}
\title{Sign-reversing involutions in moduli spaces of curves}
\author{Vance Blankers}
\address{College of Science, Northeastern University, Boston, MA 02115, USA}
\email{v.blankers@northeastern.edu}
\author{Maria Gillespie} 
\address{Department of Mathematics, Colorado State University, Fort Collins, CO 80523, USA}
\email{maria.gillespie@colostate.edu}
\thanks{The second author was partially supported by the Simons Foundation.}
\author{Jake Levinson} 
\address{Département de mathématiques et de statistique, Université de Montréal, Montréal, QC, Canada} 
\email{jake.levinson@umontreal.ca}
\thanks{The third author was partially supported by NSERC Discovery Grant RGPIN-2021-04169.}
\date{\today}
\begin{document}

\begin{abstract}
 We use sign-reversing involutions to solve two computational problems that arise naturally in the geometry of moduli spaces of curves.  In particular, we give an explicit, nonnegative combinatorial formula for arbitrary $\psi$ class intersection products on the genus zero multicolored spaces $\Mbar_{0,[r_1,\ldots,r_m]}$ using a novel sign reversing involution on decorated diagrams.  As an application, we give a necessary and sufficient condition for when these intersection products are positive in terms of matchings on graphs. 
 
 We also calculate the analog of the tropical Euler characteristic for the graphical moduli spaces $\Mbar_{0,\Gamma}$ for graphs with two dominant vertices $P, Q$, by constructing two new sign-reversing involutions to simplify the sum. We show that (up to sign) it is the number of acyclic orientations of $\Gamma \smallsetminus  \{P, Q\}$.
\end{abstract}

\maketitle

\section{Introduction}\label{sec:introduction}

We investigate two natural alternating sums arising on modular compactifications of the moduli space $M_{0, n}$ of $n$ distinct points on $\mathbb{P}^1$, specifically Fry's \emph{graphically stable moduli spaces} $\overline{M}_{0, \Gamma}$ \cite{fry2019tropical} and the special case of what we call \emph{multicolored} moduli spaces, which arise naturally in tropical geometry. We examine
alternating sums of boundary strata and alternating sums computing intersection products of $\psi$ classes, evaluating these respective sums using the combinatorial method of \emph{sign-reversing involutions} (SRIs) to reduce them to positive expressions. We thereby obtain new enumerative and positivity results. 

\begin{remark}
This paper grew out of conversations between combinatorialists and geometers at the 2024 BIRS workshop \emph{Combinatorics of Moduli of Curves}, where sign-reversing involutions were identified as a relatively underused technique in this area, despite the prevalence of alternating sums in both algebraic geometry (Euler characteristics, determinants) and in combinatorics, the latter including summations over graphs and trees resembling those encountered in studying moduli spaces of curves. We thus intend our paper as a demonstration of the usefulness and flexibility of sign-reversing involutions, in particular as applied to moduli spaces of curves.
\end{remark}

The moduli space $M_{0, n}$ admits many natural modular compactifications, including the Deligne--Mumford compactification by stable curves, the Hassett spaces of weighted stable curves, and Fry's graphically stable moduli spaces $\Moduli_{0, \Gamma}$ \cite{fry2019tropical}, among others (see \cite{smyth_zstable} for a complete classification). These spaces differ in their boundary geometry, which in turn affects their enumerative and topological properties. In terms of marked curves, they differ in their rules governing when collisions of marked points result in new components. For example, the graphically-stable moduli space $\overline{M}_{0, \Gamma}$ is determined by a choice of simple non-bipartite graph $\Gamma$ on the vertex set $[n] := \{1, \ldots, n\}$. For $\Gamma$-stable curves, the marked points are then allowed to coincide whenever the corresponding vertices of $\Gamma$ form an independent set. The boundary strata of $\Mbar_{0,\Gamma}$ are indexed by the set $\tree(\Gamma)$ of $\Gamma$-stable trees (see Definition \ref{defn:dual_tree}). 

The following special cases are distinguished in tropical geometry:
\begin{defnmc}
    Let $\Gamma$ be a complete multipartite graph on $[n]$ with independent sets $C^{(1)}, \dots, C^{(m)}$ of respective sizes $r_1,\dots,r_m$, with $r_1 + \cdots r_m = n$. We call $\Mbar_{0,[r_1,\dots,r_m]} := \Mbar_{0,\Gamma}$ a \textbf{multicolored space}. Stability with respect to $\Gamma$ can be thought of here as each marked point having a color, where marked points are allowed to coincide if and only if they have the same color.
\end{defnmc}

Multicolored spaces have particularly nice combinatorial and topological properties.  As shown in \cite[Theorem 3.13]{fry2019tropical}, multicolored spaces are precisely the graphically stable moduli spaces for which the geometric tropicalization coming from the Pl\"ucker embedding agrees with the corresponding space of tropical curves (i.e. metric trees). Multicolored spaces also generalize the \emph{Losev--Manin} and \emph{heavy-light} spaces studied in \cite{losevmanin2000, weighted, KKL21}, which can be thought of as having one color for all the light points and a distinct color for each heavy point.  The heavy-light spaces themselves are precisely the Hassett spaces with the natural tropicalization properties above \cite{weighted}. 

Our first main result is a calculation of the alternating signed count of boundary strata for any graphically stable moduli space $\Moduli_{0, \Gamma}$. Topologically, this count gives the Euler characteristic of the boundary complex of $\Moduli_{0, \Gamma}$. For multicolored spaces, this is equivalent to the Euler characteristic of the link at the origin of the associated tropical moduli space.

We say a vertex of $\Gamma$ is \textbf{dominating} if it is connected to every other vertex. For $T \in \tree(\Gamma)$, we write $i(T)$ for the number of internal edges of $T$, i.e. the codimension of the corresponding stratum. We then have the following theorem.

\begin{thmg}
    Let $\Gamma$ be a simple graph with two dominating vertices $P, Q$. 
    Let $\ACO(\Gamma)$ be the set of acyclic orientations of $\Gamma \smallsetminus \{P, Q\}$. Then
    \[
    \sum_T (-1)^{i(T)} = (-1)^{n-1}|\ACO(\Gamma)|.
    \]
\end{thmg}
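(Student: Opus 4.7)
The plan is to construct two sign-reversing involutions (SRIs) on $\tree(\Gamma)$ that successively simplify the alternating sum until the remaining contribution matches $|\ACO(\Gamma \smallsetminus \{P,Q\})|$ (possibly up to an overall sign, as suggested by the phrase ``up to sign'' in the abstract). The structural key is that for any $T \in \tree(\Gamma)$, the dominating vertices $P$ and $Q$ must each occupy their own singleton internal vertex $v_P, v_Q$---no other leaf can share these vertices since every other vertex of $\Gamma$ is adjacent to them. Thus $T$ carries a well-defined \emph{spine}: the unique path from $v_P$ to $v_Q$ in $T$.

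The first involution $\varphi_1$ targets trees with internal structure off the spine. Fix a total order on $V(\Gamma) \smallsetminus \{P,Q\}$, and for each non-caterpillar tree $T$ identify a canonical off-spine feature---for instance, the smallest-indexed leaf $\ell$ lying at a non-spine vertex together with the internal edge joining $\ell$'s branch to the spine. The involution either contracts this edge (absorbing the branch into its spine vertex) or, inversely, extracts $\ell$ from a spine block into a new branching vertex. This flips $i(T)$ by exactly one, pairing each non-caterpillar with a partner of opposite sign. The fixed points of $\varphi_1$ are the caterpillar trees, those whose internal vertices all lie on the spine.

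After $\varphi_1$ the sum reduces to a signed count over caterpillars, which correspond to ordered partitions $(A_1, \ldots, A_k)$ of $V(\Gamma) \smallsetminus \{P, Q\}$ into non-empty $\Gamma$-independent subsets with the stability condition that each consecutive union $A_i \cup A_{i+1}$ is not $\Gamma$-independent (equivalently, there is an edge of $\Gamma$ between the two blocks). Each such partition contributes $(-1)^{k+1}$. The second involution $\varphi_2$ is patterned on the classical sign-reversing proof of Stanley's identity $(-1)^{|V(G)|}\chi_G(-1) = |\ACO(G)|$: using the total order on $V(\Gamma) \smallsetminus \{P,Q\}$, it identifies a canonical ``movable'' element---the smallest $v$ that can be either split off into a new singleton block or merged into an adjacent block while preserving $\Gamma$-independence and the non-contractibility condition---and pairs a partition of length $k$ with one of length $k \pm 1$. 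The fixed points of $\varphi_2$, those partitions admitting no such move, biject with acyclic orientations of $\Gamma \smallsetminus \{P,Q\}$ via iterated sink removal.

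The principal obstacle lies in constructing $\varphi_1$: the contraction or extraction operation must always produce another $\Gamma$-stable tree, i.e., no edge may become $\Gamma$-contractible and the independence condition must hold at each vertex. A careful case analysis, centered on whether the canonical smallest off-spine leaf currently sits in a singleton or a larger block, and whether the adjacent spine block permits the transfer, will be required to verify $\varphi_1^2 = \mathrm{id}$ and to match the rare edge cases where the canonical feature is ambiguous. For $\varphi_2$, the key verification is that its fixed-point partitions match the image of acyclic orientations under iterated sink removal; this follows by induction on $|V(\Gamma) \smallsetminus \{P,Q\}|$, using the observation that the first block of any fixed-point partition must equal the set of sinks of the corresponding orientation.
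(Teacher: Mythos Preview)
Your two-involution strategy matches the paper's, but $\varphi_1$ as described does not work. Extracting a single leaf $\ell$ from a spine block into a new off-spine vertex violates $\Gamma$-stability: that new vertex would carry one node and one marked point, whereas a one-node component must carry two marked points joined by a $\Gamma$-edge. Dually, your contraction absorbs an entire branch (possibly many leaves) into the spine, so its putative inverse cannot be ``extract the single leaf $\ell$''---the map fails to be an involution. The paper's up-down involution avoids this by working one spine vertex at a time and moving \emph{all} of its off-path attachments together: at the first $v_i$ whose total off-path leaf set $L_i$ contains a $\Gamma$-edge, contract if $v_i$ has exactly one off-path edge, otherwise push all off-path edges onto a single new child of $v_i$. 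The fixed points are then caterpillars whose blocks $L_i$ are $\Gamma$-independent---a condition that does \emph{not} come for free, since leaves sharing a dual-tree vertex are distinct marked points on a $\mathbb{P}^1$, not collided points, so stability imposes no independence constraint at a vertex. (Your claim that $P$ and $Q$ each occupy singleton vertices rests on this same misreading.)

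You also impose a spurious adjacency condition ``$A_i \cup A_{i+1}$ is not $\Gamma$-independent''. No such condition arises: removing any internal spine edge separates $P$ from $Q$, and since both are dominating, each side automatically contains a $\Gamma$-edge regardless of the blocks. So the caterpillar fixed points are precisely \emph{all} ordered set partitions of $V(\Gamma)\smallsetminus\{P,Q\}$ into independent blocks, with sign $(-1)^{k-1}$. From there your plan for $\varphi_2$---a merge/split involution on such OSPs whose fixed points biject with acyclic orientations via iterated sink (or source) removal---is correct in outline and is essentially what the paper's left-right involution carries out.
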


We prove Theorem \ref{thm:graphs} using two sign-reversing involutions. The first, which we call the {\bf up-down involution}, reduces the sum to a simpler signed count of certain colorings of $\Gamma$. This sum can be recognized as equalling $|\ACO(\Gamma)|$ by classic arguments such as contraction-deletion. Interestingly, a sign-reversing involution to yield the latter does not appear in the literature, although certain closely-related constructions of Gessel and Nadeau--Bernardi can in principle be iterated to construct one \cite{bernardi}. We accordingly exhibit a second involution that computes the latter equality directly, and is of combinatorial interest in its own right as it gives such an involution in closed form. This second involution, which we call the {\bf left-right involution}, depends on the total ordering of the vertices of $\Gamma$ and generalizes the well-known merge-split involution on ordered set partitions, which corresponds to the case where $\Gamma$ has no edges.

\begin{question}\label{question:simplicial}
    Determine how Theorem \ref{thm:graphs} generalizes to the simplicially-stable moduli spaces, the most general modular compactifications of $M_{0, n}$, which are governed by a choice of abstract simplicial complex $\mathcal{K}$ on the vertex set $[n]$ (see Definition \ref{defn:simplicial_stability}). 
\end{question}
We note that our first involution generalizes straightforwardly to these spaces (see Remark \ref{rmk:up-down-simplicial}). The alternating sum of strata can moreover be interpreted as an intersection product (see Remark \ref{rmk:all-ones-product} and Proposition \ref{prop:all-ones-product}).

We next consider intersection products of $\psi$ classes in the Chow ring of the moduli space:
\[\int_{\Mbar_{0, \Gamma}} \psi_1^{k_1} \cdots \psi_n^{k_n}, \text{ where } \sum k_i = n-3.\]
On $\Mbar_{0, n}$, such a product is always given by a multinomial coefficient. On $\Mbar_{0, \Gamma}$, a reduction formula to $\Mbar_{0, k}$ for various $k\leq n$ expresses these products as alternating sums involving certain set partitions (see \cite[Theorem 7.9]{alexeevguypsi} and Theorem \ref{thm:psi-formula-graph}). Many authors have studied other variants of this product, most commonly involving pullbacks of $\psi$ classes along forgetful maps, but staying on $\overline{M}_{0, n}$,  including \cite{CGM, silversmith2021crossratio, dastidar2022matroidpsiclasses, brakensiek, GGL-tournaments, GGL-hyperplanes, Reimer-Berg}. We discuss some similarities and contrasts between these results and ours further below.

We give a nonnegative evaluation of these integrals in the case of multicolored spaces $\Mbar_{0, [r_1, \ldots, r_m]}$. Our result again uses two successive sign-reversing involutions; as with Theorem \ref{thm:graphs}, the first is `easier', and the second is `harder' and again parallels the merge-split involution. We show:

\begin{thmm2}
    The intersection product \begin{equation*}
        \int_{\Moduli_{0,[r_1,\dots,r_m]}} \left(\prod_{i=1}^{\ell_1}\psi_i^{k_i^{(1)}}\right)\left(\prod_{i=r_1+1}^{r_1+\ell_2}\psi_i^{k_i^{(2)}}\right)\cdots \left(\prod_{i=r_1+\cdots+r_{m-1}+1}^{r_1+\cdots+r_{m-1}+\ell_m}\psi_i^{k_i^{(m)}}\right)   
    \end{equation*}
    equals the number of \textbf{fixed point decorations} with parameters $$\mathbf{k}=(k_1^{(1)},k_2^{(1)},\ldots,k_{n}^{(m)}).$$
    Here, we assume each $k_i^{(j)}$ is strictly positive, and so $\ell_j \geq 0$ is the number of distinct $\psi$ classes of color $C^{(j)}$ appearing in the product above.
\end{thmm2}

The fixed point decorations in Theorem \ref{thm:main2} may be defined as ways of displaying and
marking the numbers $1,\ldots,n$ as follows:

  \begin{itemize}
      \item The numbers are sorted into rows by color, and the first number in each of the first three rows is crossed out.  The numbers representing marked points $i$ with $k_i>0$ are boxed.
      \item We assign to each number that is not crossed out a label $C_i^{(j)}$, where $i$ is a choice of boxed number (of any color), which we call its {\bf hue}, subject to the following restrictions:
      
      \begin{itemize}
        \item \textbf{Multiplicities}: Each hue $C_i^{(j)}$ is assigned to $k_i^{(j)}$ numbers,
          \item \textbf{Mismatched}: Every unboxed number is assigned a hue not from its own color, and
          \item \textbf{No permission to merge}: There is no boxed number $r$, of color $j$ and hue $C_{t}^{(j)}$, such that $t<r$ and where $\{t\}$ has permission to merge as defined in Section \ref{sec:SRI2}.
      \end{itemize}
  \end{itemize}

In particular, an easy way to satisfy the third condition is if every boxed number $i$ can be assigned \emph{itself} as its hue $C_i^{(j)}$ while respecting the multiplicities and `mismatched' conditions on the unboxed numbers; see Example \ref{ex:special-fixed-points}. These particular decorations lead to our positivity results below.

\begin{example}\label{ex:fixed_point}
    Below is an example of a fixed point decoration for the product
    \begin{align*}
        \int_{\Moduli_{0,[6,5,4,2,2]}} \psi_1\psi_2\psi_3^2 \cdot \psi_7\psi_8^2\psi_9^3\psi_{10}^4\cdot \psi_{18}\psi_{19}.
    \end{align*}
 \begin{center}
    \begin{tabular}{cccccccccc}
    $C^{(1)}$ & = & $R$ & & $\boxed{\cancel{\phantom{.}{\color{red}1}\phantom{.}}}$ & $\boxed{\phantom{.}{\color{red}2}_{R_2}\phantom{.}}$ & $\boxed{{\phantom{.}{\color{red}3}_{R_{3}}\phantom{.}}}$ & ${\color{red}4}_{B_{9}}$ & ${\color{red}5}_{B_{10}}$ & ${\color{red}6}_{B_8}$ \\
    
    $C^{(2)}$ & = &$B$ & & $\boxed{\cancel{\phantom{.}\color{blue}7\phantom{.}}}$ & $\boxed{{\color{blue}8}_{B_{8}}}$ & $\boxed{{\color{blue}9}_{B_{9}}}$ & $\boxed{{\color{blue}10}_{B_{10}}}$ & ${\color{blue}11}_{R_3}$ & \\
    
$C^{(3)}$ & = & $G$ & &    $\cancel{{\color{mygreen}12}}$ & ${\color{mygreen}13}_{B_{10}}$ & ${\color{mygreen}14}_{R_1}$ & ${\color{mygreen}15}_{B_9}$ & & \\
    
   $C^{(4)}$ & = & $Y$ & & ${\color{y1}16}_{B_{10}}$ & ${\color{y1}17}_{B_7}$ & & & & \\
    
$C^{(5)}$ & = & $V$ & &  $\boxed{{\color{y2}18}_{V_{18}}}$ & $\boxed{{\color{y2}19}_{V_{19}}}$ & & & &
\end{tabular}
\end{center}
In this decoration, each boxed number has been assigned itself as its hue, and all unboxed numbers have mismatched hues. It is therefore a fixed point decoration.
\end{example}

Our description specializes well to several special cases of interest, as outlined in Section \ref{sec:examples}. We note that these enumerations are interesting even for heavy-light spaces (see Section \ref{subsec:single-color-heavy-light}).

Theorem \ref{thm:main2} implies in particular that products of psi classes on multicolored spaces are always nonnegative. (We remark that this nonnegativity does not hold on all modular compactifications of $M_{0, n}$.) As a further application of Theorem \ref{thm:main2}, we give necessary and sufficient conditions for when such a product is positive. These conditions follow easily from Theorem \ref{thm:main2} and turn out to be linear inequalities in the data $\mathbf{k}$, convexity conditions similar to the \emph{Cerberus condition} \cite{brakensiek} for products of pullbacks of $\psi$ classes on $\overline{M}_{0, n}$, the \emph{Catalan condition} \cite{CGM} for products of omega classes, and convex quantities arising in the calculation of related integrals in terms of mixed volumes \cite{dastidar2022matroidpsiclasses}.

\begin{thmnz}
    Let $k_{C^{(j)}} = \sum k_i^{(j)}$ be the sum of the exponents on the $\psi$ classes of color $C^{(j)}$, with $k_{C^{(1)}} + \cdots + k_{C^{(m)}} = n-3$. 

    The intersection product in Theorem \ref{thm:main2} is nonzero if and only if the inequalities
    $$k_{C^{(j)}}\le \ell_j+n -3 - r_j$$ hold for all $j$ such that $k_{C^{(j)}} \ne 0$.
\end{thmnz}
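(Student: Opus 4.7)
The plan is to apply Theorem~\ref{thm:main2} to reduce positivity of the intersection product to the existence of at least one fixed point decoration with parameters $\mathbf{k}$, and then prove both directions of the iff.

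For necessity, I would fix a color $j$ with $k_{C^{(j)}} > 0$ (forcing $\ell_j \ge 1$) and count the positions available to color-$j$ hues in any hypothetical decoration. The mismatched condition forbids placing a color-$j$ hue on any unboxed non-crossed-out position of color $j$; a direct count shows there are exactly $r_j - \ell_j$ such positions (noting that when $j \le 3$ and $\ell_j \ge 1$, the first-in-row of color $j$ is both boxed and crossed out, so this count is uniform across cases). Hence at most $(n-3) - (r_j - \ell_j) = \ell_j + n - 3 - r_j$ color-$j$ hue slots can be placed, yielding the required inequality.

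For sufficiency, I would construct a fixed point decoration via Hall's theorem applied to a restricted bipartite graph $G$. One side of $G$ is the multiset of $n - 3$ hue slots (with label $C_t^{(j)}$ appearing with multiplicity $k_t^{(j)}$) and the other is the $n - 3$ non-crossed-out positions; I connect hue slot $C_t^{(j)}$ to position $p$ iff either $p$ has color different from $j$ (mismatched on unboxed, cross-color on boxed) or $p = t$ is itself a boxed position (self-hue). Any perfect matching in $G$ is automatically a fixed point decoration: the mismatched condition holds by construction, and the `no permission to merge' clause is vacuous because the only same-color hue a boxed color-$j$ position can receive in $G$ is its own self-hue ($t = r$, never $t < r$).

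The hard part will be verifying Hall's condition in $G$. Subsets $A$ of hue slots spanning at least two colors are trivial, since cross-color edges make all $n-3$ non-crossed-out positions reachable. The binding case is when $A$ consists of all slots of $C_r^{(j)}$ for $r$ in some subset $R$ of boxed color-$j$ indices. Here I will compute $|N(A)| = n - 3 - r_j + c_j + \rho$, where $\rho = |R \cap \text{non-crossed-out}|$ and $c_j = [j \le 3,\ \ell_j \ge 1] \in \{0,1\}$, and bound $|A| \le k_{C^{(j)}} - (\ell_j - |R|)$ by using $k_{r'}^{(j)} \ge 1$ for each boxed $r' \notin R$. Combining with the theorem's inequality $k_{C^{(j)}} \le \ell_j + n - 3 - r_j$ and the bookkeeping fact $|R| \le \rho + c_j$ (at most one boxed color-$j$ position is crossed out) then yields $|A| \le n - 3 - r_j + |R| \le |N(A)|$. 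The main conceptual step is the design of $G$ so that Hall's condition aligns precisely with the stated inequalities; once this is in place, the remaining verification is straightforward accounting.
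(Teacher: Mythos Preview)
Your proof is correct and uses the same core tool as the paper (Hall's Marriage Lemma), but you organize the argument differently. The paper first reduces to \emph{mismatched colorings} (assignments of colors rather than hues) via a separate surjection lemma showing that a fixed point decoration exists iff a mismatched coloring exists; it then applies Hall's to the simpler bipartite graph on colors-with-multiplicity, where the Hall check is a one-line count. You instead apply Hall's directly at the level of hues, but restrict the graph so that a boxed position of color $j$ may only receive its own self-hue among color-$j$ hues; this makes any perfect matching automatically a fixed point, bypassing the paper's surjection lemma. The tradeoff is that your Hall verification carries a bit more bookkeeping (the $\rho$ and $c_j$ parameters and the bound $|R| \le \rho + c_j$), whereas the paper offloads that work into the separate coloring-to-decoration step. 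Both routes are clean; yours is more self-contained, while the paper's isolates the ``colors suffice'' observation as a reusable lemma that also yields the lower bound of Corollary~\ref{cor:lower-bound}.
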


\begin{example} \label{ex:inequalities}
The $\psi$ class intersection $\int \psi_1\psi_2\psi_3^2 \cdot \psi_7\psi_8^2\psi_9^3\psi_{10}^4\cdot \psi_{18}\psi_{19}$ in Example \ref{ex:fixed_point} had 
\begin{align*}
\ell_1=3, \quad \ell_2=4, \quad &\ell_3=\ell_4=0, \quad \ell_5=2, \\
k_{C^{(1)}}=4, \quad k_{C^{(2)}}=10, \quad &k_{C^{(3)}}=k_{C^{(4)}}=0, \quad k_{C^{(5)}}=2.
\end{align*}
The inequalities of Theorem \ref{thm:nonzero} say
\begin{alignat*}{2}
    C^{(1)}&: \quad & 4 &\le 3+19-3-6=13, \\
    C^{(2)}&: \quad & 10 &\le 4+19-3-5=15, \\
    C^{(5)}&: \quad & 2 &\le 2+19-3-2 = 16,
\end{alignat*}
with no conditions on $C^{(3)}$ and $C^{(4)}$ since $k_{C^{(3)}} = k_{C^{(4)}} = 0$. By Theorem \ref{thm:nonzero}, we conclude that the integral is nonzero.
\end{example}

Our methods yield both an upper bound and a lower bound for the integral:

\begin{cor2}
    The intersection product of Theorem \ref{thm:main2}
    is bounded below by the number of {\bf mismatched colorings} and bounded above by the number of {\bf mismatched decorations with singleton hues}, with parameters $\mathbf{k}$.
\end{cor2}

Both of the terms above are defined in Section \ref{sec:cerberus}. Our proof of Theorem \ref{thm:nonzero} makes use of Hall's Marriage Lemma for bipartite matchings, which likewise parallels \cite{brakensiek} on Kapranov degrees and \cite{silversmith2021crossratio} on cross-ratio degrees on the standard moduli space $\overline{M}_{0, n}$, both of which give upper bounds in terms of bipartite matchings (whereas we give both upper and lower bounds). See Remark \ref{rmk:comparison-to-BELL-SIL} for further discussion.

\subsection{Outline}

After establishing background and notation on graphically stable moduli spaces in Section \ref{sec:background-graphical}, $\psi$ classes in Section \ref{sec:background-psi}, and the combinatorics of sign-reversing involutions in Section \ref{sec:background-SRI}, we prove Theorem \ref{thm:graphs} in Section \ref{sec:SRI1}. We then prove Theorem \ref{thm:main2} in Section \ref{sec:SRI2} and give some applications and observations in Section \ref{sec:examples}.  Finally, we prove Theorem \ref{thm:nonzero} in Section \ref{sec:cerberus}.

\subsection{Acknowledgments}
 This work arose from the Banff workshop on Combinatorics of Moduli of Curves (COMOC) in summer 2024.  We thank Renzo Cavalieri, Ian Cavey, Deniz Genlik, Sean Griffin, Matt Larson, Diane Maclagan, Rohini Ramadas, and Rob Silversmith for helpful conversations pertaining to this work.

\section{Geometric and combinatorial backgrouhnd} 

\subsection{Background on graphical moduli spaces}\label{sec:background-graphical}

In order to discuss the graphically stable spaces $\Mbar_{0,\Gamma}$, we first recall some basic facts about moduli spaces of curves and some of their compactifications. Interested readers should consider consulting any of \cite{hm1998, vakil08, acgh2013} for a more thorough introduction to these spaces.

\begin{defn}\label{defn:deligne-mumford_stability}
    Denote by $M_{0,n}$ the moduli space of smooth rational curves with $n$ distinct, ordered marked points up to isomorphism. We say that an at-worst nodal curve of arithmetic genus zero with $n$ distinct, smooth marked points is \textbf{Deligne-Mumford stable} if its automorphism group is finite. Equivalently, such a curve consists of a tree structure of $\PP^1$s glued at nodes with marked points distributed throughout; the curve is Deligne-Mumford stable if each $\PP^1$ has at least three ``special points'' (nodes or marked points). Then $\Mbar_{0,n}$ is the compactification of $M_{0,n}$ consisting of all such Deligne-Mumford stable curves.  
\end{defn}

The Deligne-Mumford compactification $\Mbar_{0,n}$ is a smooth, connected variety of dimension $n-3$, which can be realized as an iterated blow-up of projective space \cite{Ka2}. In fact, it is a wonderful compactification of the braid arrangement of hyperplanes in $\PP^{n-3}$ \cite{DeConciniProcesi}.

Several families of alternative compactifications for $M_{0,n}$ have been discussed in the literature \cite{losevmanin2000, hassett2003, smyth_zstable, BlankersBozlee} by disallowing certain dual trees and allowing some of the marked points to collide. Of particular interest in this paper are \textbf{graphically stable spaces} introduced in \cite{fry2019tropical} and some of their specializations. We also discuss two related compactifications: \textbf{Hassett spaces} introduced in \cite{hassett2003} and \textbf{simplicially stable spaces} introduced in \cite{BlankersBozlee}.

In each case, some marked points on curves parametrized by these spaces are allowed to collide: in a graphically stable space, we start with a graph $\Gamma$ on vertices labeled by $[n]$ and allow marks to collide if their corresponding indices form an independent set in $\Gamma$; in a Hassett space, we choose a vector of rational weights $\mathcal{A} = (a_1,\dots,a_n)$ and allow marks to collide if their corresponding weights are small enough; and in a simplicially stable space, we choose a simplicial complex $\mathcal{K}$ on the ground set $[n]$ and allow marks to collide if their corresponding indices are a face of $\mathcal{K}$. Though the results of this paper concern graphically stable spaces and some specializations thereof, we rely on a generalization of a result on Hassett spaces in Section \ref{sec:background-psi}, so we include all there definitions here for completeness.

\begin{defn}[{\cite{fry2019tropical}}]\label{defn:graphical_stability}
    Let $\Gamma$ be a simple graph on vertices $[n]$ that is not bipartite. We say that $(C;p_1,\dots,p_n)$, an at-worst nodal curve of arithmetic genus zero with $n$ smooth marked points, is \textbf{$\Gamma$-stable} if
    \begin{itemize}
        \item for every irreducible $Z \subseteq C$ with two nodes, there exists at least one marked point on $Z$; 
        \item for every irreducible $Z\subseteq C$ with a single node, there exist at least two marked points $p_i$ and $p_j$ on $Z$ such that $i$ and $j$ have an edge in $\Gamma$; and
        \item for each smooth $x\in C$, the set $\{i \, : \, p_i = x\}$ forms an independent set in $\Gamma$.
    \end{itemize}
    The \textbf{graphically stable space} $\Mbar_{0,\Gamma}$ is then the compactification of $M_{0,n}$ consisting of all $\Gamma$-stable curves up to isomorphism.
\end{defn}

\begin{defn}[{\cite{hassett2003}}]\label{defn:hassett_stability}
    Fix \textbf{weight data} $\mathcal{A} = (a_1,\dots,a_n)$ with $a_i\in (0,1]$ with $\sum a_i > 2$. We say that $(C;p_1,\dots,p_n)$, an at-worst nodal curve of arithmetic genus zero with $n$ smooth marked points, is \textbf{$\mathcal{A}$-stable} if
    \begin{itemize}
        \item for every irreducible $Z \subseteq C$ with two nodes, there exists at least one marked point on $Z$; 
        \item for every irreducible $Z\subseteq C$ with a single node, we have $\sum_{p_i \in Z} a_i > 1$; and
        \item for each smooth $x\in C$, we have $\sum_{p_i x} a_i \leq 1$.
    \end{itemize}
    Then the \textbf{Hassett space} $\Mbar_{0,\mathcal{A}}$ is the compactification of $M_{0,n}$ consisting of all such $\mathcal{A}$-stable curves up to isomorphism.
\end{defn}

\begin{defn}[{\cite{BlankersBozlee}}]\label{defn:simplicial_stability}
    Fix a simplicial complex $\mathcal{K}$ with $0$-skeleton $[n]$ such that $[n]$ is not the union of two faces of $\mathcal{K}$. We say that $(C;p_1,\dots,p_n)$, an at-worst nodal curve of arithmetic genus zero $n$ smooth marked points is \textbf{$\mathcal{K}$-stable} if
    \begin{itemize}
        \item for every irreducible $Z\subseteq C$ with two nodes, there exists at least one marked point on $Z$;
        \item for every irreducible $Z\subseteq C$ with a single node, there exist marked points $p_i, p_j \in Z$ such that $\{i, j\}$ is not an edge of $\mathcal{K}$; and
        \item for each smooth $x\in C$, the set $\{i \, : \, p_i = x\}$ is a face of $\mathcal{K}$.
    \end{itemize}
    The \textbf{simplicially stable space $\Moduli_{0,\mathcal{K}}$} is then the compactification of $\moduli_{0,n}$ consisting of all $\mathcal{K}$-stable curves up to isomorphism.
\end{defn}

\begin{remark}\label{rem:simplicial_stability}
    There are Hassett spaces which cannot be realized as graphically stable spaces and vice versa. The more general notion of simplicial stability encompasses both Hassett and graphical stability. However, there do exist compactifications that can be expressed via any of these three. Notably, the Deligne-Mumford compactification $\Moduli_{0,n}$, corresponds to any of
    \begin{itemize}
        \item $\Gamma$ a complete graph,
        \item $\mathcal{A} = (1,1,\dots,1)$, or
        \item $\mathcal{K}$ a simplicial complex with only a $0$-skeleton;
    \end{itemize}
    and the \textbf{Losev-Manin space} $\mathrm{LM}_{2|n-2}$, originally introduced in \cite{losevmanin2000}, which corresponds to
    \begin{itemize}
        \item $\Gamma$ with two vertices with maximum valence and no other edges,
        \item $\mathcal{A} = (1,1, \frac{1}{n-2}, \dots \frac{1}{n-2})$, or
        \item $\mathcal{K}$ consisting of an $(n-2)$-simplex and two disjoint vertices.
    \end{itemize}
\end{remark}

For each of these stabilities, we can define some important subspaces by specifying some combinatorial data in the form of dual graphs -- since we are working in genus zero, we in fact only need to use dual trees. Note that these trees are  distinct from the graph $\Gamma$ used to define graphical stability.

\begin{defn}\label{defn:dual_tree}
    Let $[C;p_1,\dots,p_n]$ be a point in $\Moduli_{0,\mathcal{K}}$ with $p_i \neq p_j$ for $i\neq j$. The \textbf{dual tree} of $[C;p_1,\dots,p_n]$ is the tree with one vertex for every $\PP^1$ component of $C$, an edge connecting vertices when corresponding components share a node, and labeled half-edges (leaves) corresponding to the marked points $p_1,\dots,p_n$. We say that a vertex with a half edge labeled $p_i$ \textbf{contains} $p_i$. A \textbf{boundary stratum} of $\Mbar_{0,\mathcal{K}}$ is a subset of curves sharing the same dual tree.
    \end{defn}
When working with a graphically stable space $\Moduli_{0,\Gamma}$, the set $\tree(\Gamma)$ of \textbf{$\Gamma$-stable trees} consists of all possible dual trees for a given $\Gamma$.

\begin{example}\label{ex:graph-diagram}
    Below is a graph $\Gamma$, a stable curve in $\Mbar_{0,\Gamma}$, and its dual tree.
\begin{center}
    \includegraphics{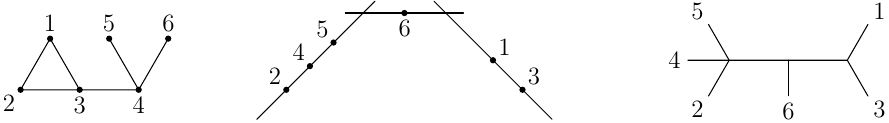}
\end{center}
Note that $\{1, 3\}$ and $\{2, 4, 5\}$ are not independent sets of $\Gamma$.
\end{example}

In \cite{fry2019tropical}, Fry shows that specific graphically stable spaces behave particularly well in regard to tropicalization. It is for these \textbf{multicolored spaces} that our results in Section \ref{sec:SRI2} apply.

\begin{defn}\label{defn:multicolored}
    Let $\Gamma$ be a complete multipartite graph on $[n]$ with independent sets $C^{(1)}, \dots, C^{(m)}$ of respective sizes $r_1,\dots,r_m$ so that $r_1 + \cdots + r_m = n$. We call $\Mbar_{0,[r_1,\dots,r_m]} := \Mbar_{0,\Gamma}$ a \textbf{multicolored space}. We interpret each marked point $p_i$ on a curve (or corresponding leaf of the dual tree) as having color $C^{(j)}$ if $i\in C^{(j)}$.
\end{defn}

\begin{remark} \label{rmk:dual-trees}
    Dual trees of $\Mbar_{0,[a_1,\ldots,a_k]}$ have the property that whenever an edge is removed from the tree, both remaining connected components contain a pair of leaves of different colors.
\end{remark}

\subsection{Background on \texorpdfstring{$\psi$}{} classes}\label{sec:background-psi}

Throughout this section we refer frequently to \cite{alexeevguypsi}, which nominally discusses $\psi$ classes on (stable maps from) Hassett spaces. However, for much of their paper the authors use the language of simplicially stable spaces -- though these were only formally defined and constructed in \cite{BlankersBozlee}. As such, many of the arguments and results in \cite{alexeevguypsi} immediately generalize to simplicially stable spaces. Below, we summarize and translate the relevant results to our setting and notation; an interested reader may consult their original article, along with \cite{bc_hassett} for another treatment of some of their intersection results and \cite{BlankersBozlee} for the formal construction of simplicially stable spaces.

\begin{remark}\label{rem:higher-genus}
    Other than Lemma \ref{lem:multinomial-form}, the contents of this section also generalize to moduli spaces of higher genus curves $\moduli_{g,n}$ without alteration. By restricting our attention to genus zero, we benefit from the multinomial coefficient description of $\psi$ class intersections on $\Moduli_{0,n}$ used in Lemma \ref{lem:multinomial-form}, and we need only consider dual trees instead of more general dual graphs when studying boundary strata in Section \ref{sec:SRI1}.
\end{remark}

\begin{defn}\label{defn:psi_classes}
    Let $\Moduli$ be any simplicially stable compactification of $\moduli_{0,n}$, and let $\mathbb{L}_i \to \Moduli$ be the \textbf{$i$th cotangent line bundle}, whose fibers are naturally identified over a point $[C;p_1,\dots,p_n]\in \Moduli$ with the cotangent space $T^*_{p_i}C$. Define the \textbf{$i$th $\psi$ class} in $A^1(\Moduli)$ as
    \begin{align*}
        \psi_i := c_1(\mathbb{L}_i),
    \end{align*}
    where $c_1$ is the first Chern class.
\end{defn}

While originally studied on the Deligne-Mumford compactification, $\psi$ classes can be defined in this way for any simplicially stable space, since the marked points
are required to be smooth. In \cite[Theorem 7.9]{alexeevguypsi}, the authors give a formula for top degree $\psi$ class intersections on Hassett spaces in terms of $\psi$ class intersections on $\Moduli_{g,n}$ (see also \cite[Corollary 3.8]{bc_hassett}, from which we adopt the bulk of our notation). 
\begin{thm}[{{\cite[Theorem 7.9]{alexeevguypsi}, \cite[Corollary 3.8]{bc_hassett}}}]\label{thm:psi-formula-og}
    For weight data $\mathcal{A} = (a_1,\dots,a_n)$, let $\mathfrak{P}_{\mathcal{A}}$ be the set of partitions $\mathcal{P} = \{P_1,\dots,P_r\} \vdash [n]$ such that $\sum_{i\in P_j} a_i \leq 1$ for each $P_j$.  We write $\ell(\mathcal{P})$ for the number of blocks $r$ of the set partition.
    For $i \in [n]$, let $k_i$ be a non-negative integer, and let $\sum_{i\in \cup [n]} k_i = n-3$. Define $k_{P_j} = \sum_{i\in P_j} k_i$. Then
    \begin{align*}
        \int_{\Moduli_{0,\mathcal{A}}} \prod_{i=1}^n \psi_i^{k_i} = \sum_{\mathcal{P} \in \mathfrak{P}_{\mathcal{A}}} (-1)^{n + \ell(\mathcal{P})} \int_{\Moduli_{0,\ell(\mathcal{P})}} \prod_{j=1}^{\ell(\mathcal{P})} \psi_{j}^{k_{P_j} - |P_j| + 1}.
    \end{align*}
\end{thm}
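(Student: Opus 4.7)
The plan is to reduce to the known multinomial case of $\overline{M}_{0,n}$ (Lemma \ref{lem:multinomial-form}) by pulling back along the Hassett reduction morphism $\rho \colon \overline{M}_{0,n} \to \overline{M}_{0,\mathcal{A}}$ and performing inclusion--exclusion over boundary corrections. This morphism is birational with connected fibres and contracts precisely those boundary divisors $\delta_S$ with $|S| \geq 2$ and $\sum_{j \in S} a_j \leq 1$. Since $\rho_{*}[\overline{M}_{0,n}] = [\overline{M}_{0,\mathcal{A}}]$, the projection formula gives
\[
\int_{\overline{M}_{0,\mathcal{A}}} \prod_i (\psi_i^{\mathcal{A}})^{k_i} \;=\; \int_{\overline{M}_{0,n}} \prod_i \bigl(\rho^* \psi_i^{\mathcal{A}}\bigr)^{k_i}.
\]
The first technical input I would need is a comparison formula for $\rho^* \psi_i^{\mathcal{A}}$, analogous to the classical identity $\pi^* \psi_i = \psi_i - \delta_{i,n+1}$ for the forgetful map:
\[
\rho^* \psi_i^{\mathcal{A}} \;=\; \psi_i \;-\; \sum_{\substack{S \ni i,\ |S| \geq 2 \\ \sum_{j \in S} a_j \leq 1}} \delta_S.
\]
This is proved by factoring $\rho$ into a sequence of elementary contractions and tracking the cotangent line bundle along the $i$th section through each blow-down.

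Next, I would expand the product $\prod_i \bigl(\psi_i - \sum_S \delta_S\bigr)^{k_i}$ and reorganize the resulting monomials. The divisors $\delta_S$ and $\delta_{S'}$ meet transversally in $\overline{M}_{0,n}$ exactly when $S, S'$ are nested or disjoint, so any nonzero monomial in the boundary classes corresponds to a laminar family of subsets whose minimal sets form a partition $\mathcal{P} \in \mathfrak{P}_{\mathcal{A}}$. On the associated stratum
\[
\delta_{\mathcal{P}} \;\cong\; \overline{M}_{0,\,\ell(\mathcal{P})} \;\times\; \prod_j \overline{M}_{0,\,|P_j|+1},
\]
each $\psi_i$ with $i \in P_j$ restricts to the $\psi$ class at mark $i$ on the small factor $\overline{M}_{0,|P_j|+1}$, and the normal-bundle contributions supply $\psi$ classes at the attaching nodes. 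Integrating out each small factor via Lemma \ref{lem:multinomial-form} collapses $\prod_{i \in P_j} \psi_i^{k_i}$ to a single $\psi_j^{\,k_{P_j} - |P_j| + 1}$ on the central factor $\overline{M}_{0,\ell(\mathcal{P})}$, consistent with the dimension count $\sum_j (k_{P_j} - |P_j| + 1) = (n-3) - (n - \ell(\mathcal{P})) = \ell(\mathcal{P}) - 3$. The sign $(-1)^{n + \ell(\mathcal{P})}$ emerges because each block $P_j$ contributes $(-1)^{|P_j| - 1}$ from the inclusion--exclusion, whose product is $(-1)^{n - \ell(\mathcal{P})} = (-1)^{n + \ell(\mathcal{P})}$.

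The main obstacle is the bookkeeping in the expansion: many different choices of subsets $(S_1, \ldots, S_n)$ in the distributed product can yield the same partition $\mathcal{P}$, and one must verify that the multiplicities combine correctly after applying Keel's relations on the boundary. The cleanest way to sidestep this combinatorial tangle is to run induction on the Hassett factorization $\rho = \rho_1 \circ \cdots \circ \rho_k$ into elementary contractions, so that each step introduces exactly one new block into the partitions appearing, and the alternating sign then emerges telescopically. A secondary subtlety is ensuring that the normal-bundle $\psi$-contributions on each $\overline{M}_{0,|P_j|+1}$ factor have the right exponent at the attaching node to match the multinomial formula; this can be tracked using the standard self-intersection formula $\delta_{\mathcal{P}}\big|_{\delta_{\mathcal{P}}} = -\psi_\star - \psi_{\star\star}$ for the two half-edges at each node.
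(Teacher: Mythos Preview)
The paper does not prove this theorem; it is stated as a citation to \cite{alexeevguypsi} and \cite{bc_hassett}. The closest thing to a ``paper's own proof'' is the proof of the graphical analogue, Theorem~\ref{thm:psi-formula-graph}, which is itself just a pointer: it invokes the reduction map $\rho\colon\Moduli_{0,n}\to\Moduli_{0,\Gamma}$, the pullback comparison $\psi_i=\rho^*\psi_i+D_i$, and then defers the remaining ``combinatorial bookkeeping'' to \cite[Corollary~3.8]{bc_hassett}. Your plan follows exactly this route --- pull back along $\rho$, use the comparison of $\psi$ classes, and organize the boundary corrections by partitions --- so at the level of strategy you are aligned with the cited sources and with the paper's sketch.

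Two cautions. First, you invoke Lemma~\ref{lem:multinomial-form} to integrate over the small factors, but in the paper that lemma is \emph{derived from} the theorem you are proving; you should instead cite the classical multinomial formula on $\Moduli_{0,n}$ directly (which is all that is actually needed on the factors $\Moduli_{0,|P_j|+1}$). Second, your middle paragraph --- expanding $\prod_i(\psi_i-\sum_S\delta_S)^{k_i}$ wholesale and asserting that surviving monomials are indexed by laminar families collapsing to partitions --- glosses over the genuine work. You yourself flag this (``the main obstacle is the bookkeeping''), and your proposed fix, inducting along the factorization of $\rho$ into elementary contractions, is the approach that actually succeeds in \cite{bc_hassett}; the direct expansion does not cleanly yield the stated formula without that induction. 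So your sketch is on the right track, but the content of the proof lives in the inductive step you mention only at the end, not in the global expansion you describe first.
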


The arguments in \cite{alexeevguypsi} involving $\psi$ classes proceed essentially unchanged in the case of simplicially stable spaces -- even those which are not Hassett spaces -- with the caveat that the conclusions in terms of Hassett stability must be reworded to be given in terms of simplicial stability. This result in turn can be restated in terms of graphically stable spaces, per Remark \ref{rem:simplicial_stability}. In particular, we need only change the definition of the set of partitions $\mathfrak{P}$ in the sum.

\begin{thm}\label{thm:psi-formula-graph}
    For a simple graph $\Gamma$ on vertex set $[n]$, let $\mathfrak{P}_{\Gamma}$ be the set of partitions $\mathcal{P} = \{P_1,\dots,P_r\} \vdash [n]$ 
    such that $P_j$ is an independent set in $\Gamma$ for each $j$.
    Then
    \begin{align*}
        \int_{\Moduli_{0,\Gamma}} \prod_{i=1}^n \psi_i^{k_i} = \sum_{\mathcal{P} \in \mathfrak{P}_{\Gamma}} (-1)^{n + \ell(\mathcal{P})} \int_{\Moduli_{0,\ell(\mathcal{P})}} \prod_{j=1}^{\ell(\mathcal{P}))} \psi_{j}^{k_{P_j} - |P_j| + 1}.
    \end{align*}
\end{thm}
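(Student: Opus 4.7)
The plan is to deduce Theorem \ref{thm:psi-formula-graph} from the simplicial generalization of Theorem \ref{thm:psi-formula-og}, which the surrounding discussion identifies as the form in which Alexeev and Guy's argument actually operates.

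First, I would identify $\Moduli_{0,\Gamma}$ with a simplicially stable space. Given a simple graph $\Gamma$ on $[n]$, let $\mathcal{K}(\Gamma)$ be the independence complex of $\Gamma$, i.e.\ the abstract simplicial complex whose faces are exactly the independent sets of $\Gamma$. Comparing Definitions \ref{defn:graphical_stability} and \ref{defn:simplicial_stability} term by term, a curve is $\Gamma$-stable if and only if it is $\mathcal{K}(\Gamma)$-stable, and (under the non-bipartite hypothesis on $\Gamma$) $[n]$ is not the union of two faces of $\mathcal{K}(\Gamma)$. Hence $\Moduli_{0,\Gamma} = \Moduli_{0,\mathcal{K}(\Gamma)}$ as compactifications of $\moduli_{0,n}$.

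Next, I would invoke the simplicial version of the Alexeev--Guy formula: for any admissible simplicial complex $\mathcal{K}$,
\[
\int_{\Moduli_{0,\mathcal{K}}} \prod_{i=1}^n \psi_i^{k_i} = \sum_{\mathcal{P} \in \mathfrak{P}_{\mathcal{K}}} (-1)^{n+\ell(\mathcal{P})} \int_{\Moduli_{0,\ell(\mathcal{P})}} \prod_{j=1}^{\ell(\mathcal{P})} \psi_j^{k_{P_j} - |P_j| + 1},
\]
where $\mathfrak{P}_{\mathcal{K}}$ is the set of partitions of $[n]$ whose blocks are faces of $\mathcal{K}$. The proof is a direct translation of Alexeev and Guy's argument for Hassett spaces. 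One uses the reduction morphism $\rho: \Moduli_{0,n} \to \Moduli_{0,\mathcal{K}}$, expresses each $\psi$ class on $\Moduli_{0,\mathcal{K}}$ as $\rho^*\psi_i$ adjusted by boundary corrections supported on the divisors contracted by $\rho$, expands the product, and applies the projection formula so that each surviving term corresponds to a partition whose blocks index the coincidences of marks permitted by stability. Only the combinatorial datum ``which subsets of marks may coincide'' enters this calculation, so the Hassett weight condition $\sum_{i\in P_j} a_i \leq 1$ is replaced by the simplicial condition $P_j \in \mathcal{K}$ with no further modification.

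Finally, taking $\mathcal{K} = \mathcal{K}(\Gamma)$, the faces of $\mathcal{K}(\Gamma)$ are by definition the independent sets of $\Gamma$, so $\mathfrak{P}_{\mathcal{K}(\Gamma)} = \mathfrak{P}_\Gamma$ as sets of partitions, and substitution yields the claimed formula. The main obstacle is not conceptual but expository: one must verify that every step of the original argument for Hassett spaces (construction of $\rho$, identification of its exceptional loci, and the comparison formula between $\psi$ classes on source and target) depends only on the collection of admissible coincidence sets, and not on the specific rational weights $(a_1,\dots,a_n)$. Once this dependency is isolated, Theorem \ref{thm:psi-formula-graph} follows from the simplicial formula by the purely notational substitution $\mathcal{K} \leftrightarrow \mathcal{K}(\Gamma)$.
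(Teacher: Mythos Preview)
Your proposal is correct and follows essentially the same approach as the paper: both arguments identify $\Moduli_{0,\Gamma}$ as the simplicially stable space for the independence complex of $\Gamma$, observe that the Alexeev--Guy arguments (and the auxiliary lemmas from \cite{bc_hassett}) are already phrased in simplicial terms, and conclude by substituting the independent-set condition for the Hassett weight condition. The paper's proof is slightly more specific in pinning down which cited results (\cite[Corollary 4.9, Corollary 5.4]{alexeevguypsi} and \cite[Lemmas 2.8--2.9, Corollary 3.8]{bc_hassett}) carry over verbatim, but the logical structure is the same as yours.
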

\begin{proof}
    Although their goal is to address Hassett spaces, the arguments in \cite[Corollary 4.9]{alexeevguypsi} and all required antecedent propositions are written in terms of simplicial stability; therefore, for any graphically stable space (which is in particular also a simplicially stable space), there exists a map $\rho:\Moduli_{0,n} \to \Moduli_{0,\Gamma}$ which consists of a series of blow-ups. The same observation holds for \cite[Corollary 5.4]{alexeevguypsi}, establishing that
    \begin{align*}
        \psi_i = \rho^*(\psi_i) + D_i,
    \end{align*}
    where the correction term $D_i$ is the sum of all boundary divisors corresponding to dual trees $T$ for which $\{j \in [n] : j$ is attached to the same vertex of $T$ as $i\}$ forms an independent set of $\Gamma$.

    With this pullback statement, the results \cite[Lemma 2.8, Lemma 2.9]{bc_hassett} comparing the intersection of $\psi$ classes with boundary strata also follow through analogously. Finally, the argument in \cite[Corollary 3.8]{bc_hassett} requires only these results and some combinatorial bookkeeping independent of any geometry, so its analogue -- which is the theorem statement -- also follows.  
\end{proof}

Changing the underlying combinatorial structure (simplicial complex, weights, graph) defining stability does not change the value of an integral of $\psi$ classes if the changes in the structure occur away from the indices corresponding to $\psi$ classes with positive exponents. This follows from the fact that the morphism $\rho:\Moduli \to \Moduli'$ induced also induces a map of universal families $\tilde{\rho}:\overline{U} \to \overline{U}'$, which is an isomorphism in a neighborhood of the $i$th section if the $i$th vertex/weight is not impacted by the change in stability conditions. Since we are primarily interested in $\psi$ class integrals on graphical spaces, we present the result in that context, but the respective statements and proofs for the Hassett and simplicially stable cases are entirely analogous.

\begin{cor}\label{cor:color-clumping}
   Let $\Gamma$ be a simple graph on vertex set $[n]$, and let $\Gamma'$ be a graph obtained from $\Gamma$ by deleting some set of edges $E$ of $\Gamma$. Let $\mathbf{k} = (k_1, \ldots, k_n)$ be nonnegative integers. Suppose that for all $i \in [n]$ with $k_i > 0$, $i$ is not incident to any edge in $E$. Then
    \begin{align*}
        \int_{\Moduli_{0,\Gamma}} \prod_{i=1}^n \psi_i^{k_i} = \int_{\Moduli_{0,\Gamma'}} \prod_{i=1}^n \psi_i^{k_i}.
    \end{align*}
\end{cor}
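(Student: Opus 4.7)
The plan is to build a birational reduction morphism $\rho\colon\Moduli_{0,\Gamma}\to\Moduli_{0,\Gamma'}$ and show that $\rho^{*}\psi_i=\psi_i$ for every index $i$ with $k_i>0$; the desired equality of integrals then drops out of the projection formula. Since $\Gamma'\subseteq\Gamma$, the $\Gamma'$-stability conditions are ``coarser'' (fewer edges means more independent sets, so more collisions are permitted, and the single-node condition is more stringent). The universal construction recalled in Section~\ref{sec:background-psi} (via the simplicial framework of \cite{alexeevguypsi,BlankersBozlee}) produces a morphism $\rho$ that is the identity on the open locus $M_{0,n}$ and therefore birational; concretely, it contracts exactly those boundary divisors whose dual trees cease to be $\Gamma'$-stable when the edges $E$ are removed.

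Next, I would compare the $\psi$ classes via the universal families. Let $\pi\colon\overline{U}\to\Moduli_{0,\Gamma}$ and $\pi'\colon\overline{U}'\to\Moduli_{0,\Gamma'}$ denote the universal curves with tautological sections $\sigma_i,\sigma_i'$, and let $\tilde\rho\colon\overline{U}\to\overline{U}'$ be the induced morphism, so that $\tilde\rho\circ\sigma_i=\sigma_i'\circ\rho$. The hypothesis that no $i$ with $k_i>0$ is incident to any edge of $E$ is precisely the statement that $\Gamma$- and $\Gamma'$-stability agree in an (analytic) neighborhood of the $i$th section: any collision or nodal configuration distinguishing the two stability conditions must involve at least one endpoint of an edge in $E$, and hence is disjoint from $\sigma_i$. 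Therefore $\tilde\rho$ restricts to an isomorphism on a neighborhood of $\sigma_i(\Moduli_{0,\Gamma})$, and this isomorphism identifies the relative cotangent bundles along the sections, giving $\rho^{*}\mathbb{L}_i\cong\mathbb{L}_i$ and hence $\rho^{*}\psi_i=\psi_i$ for every $i$ with $k_i>0$.

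With this pullback identity in hand, the conclusion is immediate from the projection formula: since $\rho$ is birational of degree one,
\begin{align*}
\int_{\Moduli_{0,\Gamma}}\prod_{i=1}^{n}\psi_i^{k_i}=\int_{\Moduli_{0,\Gamma}}\rho^{*}\!\left(\prod_{i=1}^{n}\psi_i^{k_i}\right)=\int_{\Moduli_{0,\Gamma'}}\prod_{i=1}^{n}\psi_i^{k_i},
\end{align*}
where the first equality uses that for each $i$ with $k_i>0$ we have $\rho^{*}\psi_i^{k_i}=\psi_i^{k_i}$ (indices with $k_i=0$ do not appear in the product at all).

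The main obstacle is verifying the local-isomorphism claim for $\tilde\rho$ near the $i$th section. One must check that every exceptional divisor of $\rho$ corresponds geometrically to a configuration in which some pair of points joined by a deleted edge has become coincident (or lies on a component whose stability is witnessed only by a deleted edge), and that no such configuration meets $\sigma_i$ when $i$ is disjoint from $E$. This amounts to a careful but essentially bookkeeping traversal of the explicit blow-up construction of $\Moduli_{0,\Gamma}$ used in the proof of Theorem~\ref{thm:psi-formula-graph}; after that, the remaining steps are purely formal.
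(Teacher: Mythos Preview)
Your proposal is correct and follows essentially the same approach as the paper: both use the reduction morphism $\rho\colon\Moduli_{0,\Gamma}\to\Moduli_{0,\Gamma'}$ and argue that $\rho^{*}\psi_i=\psi_i$ whenever $k_i>0$, then conclude via the projection formula. The only cosmetic difference is that the paper phrases the comparison as $\psi_i=\rho^{*}\psi_i+D_i$ (citing the analogue of \cite[Corollary~5.4]{alexeevguypsi}) and checks combinatorially that the correction divisor $D_i$ vanishes --- since the independent sets of $\Gamma$ and $\Gamma'$ containing $i$ coincide when $i$ is not incident to $E$ --- whereas you argue directly that $\tilde\rho$ is a local isomorphism near the $i$th section; the paper's own preamble to the corollary in fact sketches your universal-family version before giving the $D_i$ argument.
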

\begin{proof}

    Let $\rho:\Moduli_{0,\Gamma} \to \Moduli_{0,\Gamma'}$ be the reduction morphism which, on the level of curves, collapses subcurves which are $\Gamma$-stable but not $\Gamma'$ stable. Similar to the previous proof, we have 
    \begin{align*}
        \psi_i = \rho^*(\psi_i) + D_i,
    \end{align*}
    where $D_i$ is the sum of all boundary divisors with dual trees $T$ for which $\{j \in [n] : j$ is attached to the same vertex of $T$ as $i\}$ forms an independent set in $\Gamma'$ but not in $\Gamma$. If $k_i > 0$, then $i \in \Gamma$ is not incident to any of the deleted edges, so the independent sets containing $i$ are the same in $\Gamma$ as in $\Gamma'$.
    Thus $D_i = 0$ and so all terms appearing in the integral have $\psi_i = \rho^*(\psi_i)$.
\end{proof}

We now fix $\Gamma$ to be a complete multipartite graph so that $\Moduli_{0,\Gamma}$ a multicolored space.

\begin{lemma}\label{lem:multinomial-form}
Let $\Mbar_{0,[r_1,\dots,r_m]}$ be a multicolored space with $r_1 + \cdots + r_m = n$. We have
$$\int_{\Mbar_{0,[r_1,\ldots,r_m]}}\psi_1^{k_1}\psi_2^{k_2}\cdots \psi_n^{k_n}=\sum_{\mathcal{P}}(-1)^{\sum_{B\in \mathcal{P}}(|B|-1)}\binom{\ell(\mathcal{P})-3}{k_{B_1}-|B_1|+1,k_{B_2}-|B_2|+1,\ldots}$$    
where the sum is over all set partitions $\mathcal{P}$ of the marked points such that each block $B$ is monochromatic. Here we write $k_B=\sum_{i\in B}k_i$, and when some $k_{B_j}-|B_j|+1$ is negative, the entire multinomial coefficient is defined to be $0$.
\end{lemma}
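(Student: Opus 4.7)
The plan is to combine Theorem \ref{thm:psi-formula-graph} with the classical multinomial formula for $\psi$ class intersections on the Deligne--Mumford compactification $\Mbar_{0, n}$, specialized to the case where $\Gamma$ is complete multipartite. The main observation is that the independent sets of a complete multipartite graph on independent sets $C^{(1)},\ldots,C^{(m)}$ are precisely the subsets contained in a single color class. Hence the set $\mathfrak{P}_{\Gamma}$ of partitions of $[n]$ into independent sets of $\Gamma$ coincides with the set of partitions into monochromatic blocks.

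First I would invoke Theorem \ref{thm:psi-formula-graph} to write
\[
\int_{\Moduli_{0,\Gamma}} \prod_{i=1}^n \psi_i^{k_i} = \sum_{\mathcal{P} \in \mathfrak{P}_{\Gamma}} (-1)^{n + \ell(\mathcal{P})} \int_{\Moduli_{0,\ell(\mathcal{P})}} \prod_{j=1}^{\ell(\mathcal{P})} \psi_{j}^{k_{P_j} - |P_j| + 1},
\]
and reinterpret the index set as monochromatic set partitions. Next I would apply the classical Witten--Kontsevich / string equation consequence that
\[
\int_{\Moduli_{0,N}} \psi_1^{a_1}\cdots \psi_N^{a_N} = \binom{N-3}{a_1,\ldots,a_N}
\]
when $\sum a_i = N-3$ and all $a_i \geq 0$, with the convention that the integral vanishes if any $a_i$ is negative (which matches the stated convention in the lemma that the multinomial is $0$ when any $k_{B_j}-|B_j|+1<0$). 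A quick check confirms the degree constraint: since $\sum_j (k_{P_j}-|P_j|+1) = (n-3) - n + \ell(\mathcal{P}) = \ell(\mathcal{P})-3$, the multinomial is of the correct size.

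It remains to reconcile the two sign conventions. Since $\sum_{B \in \mathcal{P}}(|B|-1) = n - \ell(\mathcal{P})$, we have
\[
(-1)^{n+\ell(\mathcal{P})} = (-1)^{n-\ell(\mathcal{P})} = (-1)^{\sum_{B \in \mathcal{P}}(|B|-1)},
\]
so the sign in Theorem \ref{thm:psi-formula-graph} matches the sign stated in the lemma, completing the derivation.

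This proof is essentially a direct specialization, so no substantial obstacle is expected; the only subtlety is the sign rewriting and noting the convention for the vanishing multinomial, both of which are bookkeeping. The substantive geometric content is contained in Theorem \ref{thm:psi-formula-graph} (hence ultimately in the results of Alexeev--Guy cited there).
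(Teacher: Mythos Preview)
Your proposal is correct and follows exactly the same approach as the paper's own proof: specialize Theorem \ref{thm:psi-formula-graph} to the complete multipartite graph (so that $\mathfrak{P}_\Gamma$ is the set of monochromatic partitions) and then replace the $\psi$ class integral on $\Mbar_{0,\ell(\mathcal{P})}$ by the classical multinomial coefficient. Your additional remarks on the degree check and the sign rewriting $(-1)^{n+\ell(\mathcal{P})}=(-1)^{\sum_B(|B|-1)}$ are correct bookkeeping that the paper leaves implicit.
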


\begin{proof}
    We use Theorem \ref{thm:psi-formula-graph} for the complete multipartite graph with independent sets of sizes $r_1,\ldots,r_m$. In genus $0$, it is well known (e.g., \cite{k:psi}) that the $\psi$ class intersection on the right hand side is simply the multinomial coefficient shown.
\end{proof}

\begin{remark}\label{rmk:coefficients-nonzero}
    The terms in the summation in Lemma \ref{lem:multinomial-form} are $0$ whenever $k_{B_j}-|B_j|+1<0$ for some $j$.  Thus we may restrict our summation to the set partitions for which each block $B_j$ has the property that $k_{B_j}-|B_j|+1\ge 0$. In particular, this means that any block that does not contain a point with a $\psi$ class occurring to a positive power in the product must be a singleton block.
\end{remark}

\subsection{Background on sign reversing involutions (SRIs)}\label{sec:background-SRI}

We now introduce the combinatorial notation we use throughout.  See \cite[Section 2.2]{SaganBook} for many of these definitions and for the ordered set partition involution below.

\begin{defn}
    Let $X$ be a set and \[\epsilon : X \to \{\pm 1\}\] a sign statistic on $X$. We recall that a {\bf sign-reversing involution} is an involution $\varphi : X \to X$ such that whenever $\varphi(x) \ne x$, then $\epsilon(\varphi(x)) = -\epsilon(x)$.
\end{defn} 

The main purpose of constructing a sign-reversing involution is to achieve the simplification
\begin{equation}\label{eq:alt-sum-X}
\sum_{x \in X} \epsilon(x) = \sum_{x \in \Fix(\varphi)} \epsilon(x),
\end{equation}
where $\Fix(\varphi) = \{x : \varphi(x) = x\}$ is the set of fixed points. In most cases of interest, $\varphi$ is designed such that its fixed points all have the same sign, in which case
\[
\sum_{x \in \Fix(\varphi)} \epsilon(x) = \pm |\Fix(\varphi)|.
\]

\subsection{(Ordered) set partitions}

Our results generalize a well-known sign reversing involution on ordered set partitions. We establish notation for set partitions and ordered set partitions here and recall the involution below.

\begin{defn}
    A \textbf{set partition} of a set $X$ is a set $\mathcal{P}=\{B_1,\ldots,B_k\}$ of subsets $B_i\subseteq X$ such that:
    \begin{itemize}
        \item $|B_i|>0$ for all $i$,
        \item $B_i\cap B_j=\emptyset$ for any $i,j$, and
        \item $B_1\cup \cdots \cup B_k=X$.
    \end{itemize}
    The $B_i$ are called \textbf{blocks} of the set partition, and $k=|\mathcal{P}|$ is also denoted $\ell(\mathcal{P})$ for the \textbf{length} of the set partition, or the number of blocks.
\end{defn}

\begin{defn}
    An \textbf{ordered set partition}, or OSP, is a set partition along with an ordering on the blocks, forming a tuple $P=(B_1,\ldots,B_k)$ with the properties above.   Its \textbf{sign} is $$(-1)^{n-\ell(P)}=(-1)^{\sum |B_i|-1}.$$  Note that this means when $P$ consists of all singleton blocks, its sign is $1$.
\end{defn}

For example, the set $\{\{4\},\{2\},\{1,3,6\},\{5\},\{7,8\}\}$ is a set partition of $\{1,2,\ldots,8\}$, whereas $(\{4\},\{2\},\{1,3,6\},\{5\},\{7,8\})$ is an ordered set partition, with sign $(-1)^{3}=-1$.

\begin{defn} \label{def:invOSP}
    The sign reversing involution $\invOSP$ is defined on ordered set partitions as follows.  Given an OSP $P$, let $B_i$ be the leftmost block that is either 
    
    \begin{itemize}
        \item not a singleton ($|B_i|>1$) or 
        \item a singleton $B_i=\{c\}$ that occurs just after a smaller singleton $\{m\}$.
    \end{itemize}
       In either case let $B_{i-1}=\{m\}$ be the singleton block preceding $B_i$ (if it exists) and let $c=\min(B_i)$.

    \textbf{Split case:} If $c<m$ or $m$ does not exist, replace $B_i$ by the two blocks $\{c\},B_i\setminus c$ in order.

    \textbf{Merge case:} If $c>m$, merge $m$ into block $B_i$.

    \noindent We define the output to be $\invOSP(P)$.
\end{defn}

\begin{example}
    Starting from $P=(\{4\},\{2\},\{1,3,6\},\{5\},\{7,8\})$, we identify the leftmost block $B_i$ to be $\{1,3,6\}$, since the singleton $2$ is less than the preceding singleton $4$.  So we have $$\invOSP(P)=\{\{4\},\{2\},\{1\},\{3,6\},\{5\},\{7,8\}\}.$$
    To apply the involution again, we now have that $\{3,6\}$ is the leftmost block that is either not a singleton or a singleton whose previous singleton is smaller.  But now we are in the merge case because $1<3$.  So we have 
    $$\invOSP\left(\invOSP(P)\right)=\{\{4\},\{2\},\{1,3,6\},\{5\},\{7,8\}\}=P.$$
    Note that the sign also changes when we merge or split.
\end{example}

Note that the involution $\invOSP$ allows one to cancel terms in the alternating sum below to find: $$\sum_{P\in \mathrm{OSP}(n)} (-1)^{n-\ell(P)}=1.$$  Indeed, there is only one fixed point of $\invOSP$, namely when there are no blocks $B_i$ satisfying either condition.  Thus the blocks must all be singletons and in decreasing order, which only happens in the unique OSP $(\{n\},\ldots,\{2\},\{1\})$.

\section{An SRI for alternating counts of boundary strata}\label{sec:SRI1}

Let $\Gamma$ be a simple graph on vertices $\{P,Q\} \cup [n]$ in which $P$ and $Q$ are each dominant - they are connected to every other vertex in $\Gamma$.
\begin{thm}\label{thm:graphs}
    We have $$\sum_{T\in \tree(\Gamma)}(-1)^{i(T)}=(-1)^{n-1}|\ACO(\Gamma)|$$
    where $i(T)$ is the number of internal edges of $T$ and $\ACO(\Gamma)$ is the set of acyclic orientations of $\Gamma\smallsetminus \{P,Q\}$.
\end{thm}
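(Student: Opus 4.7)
My plan is to prove Theorem \ref{thm:graphs} by constructing the two sign-reversing involutions alluded to in the paper's introduction. The first, an ``up-down involution'' $\varphi^{\updownarrow}$ on $\tree(\Gamma)$, should cancel trees whose structure is ``superfluous'' to the underlying spine / ordered-block structure. Since $P$ and $Q$ are dominating, every $\Gamma$-stable tree $T$ has a well-defined spine from the tree-vertex $v_P$ containing $P$ to the tree-vertex $v_Q$ containing $Q$, and any tree-leaf of $T$ off this spine must carry two marks adjacent in $\Gamma \smallsetminus \{P,Q\}$. I would design $\varphi^{\updownarrow}$ to scan $T$ in a canonical order (e.g.\ by distance from $v_P$ together with a fixed ordering on $[n]$) and identify the first ``site'' that can be toggled: either an edge of $\Gamma \smallsetminus \{P,Q\}$ whose two endpoints lie at the same tree-vertex $v$ (in which case we split $v$ along this edge) or whose two endpoints lie at tree-vertices connected by an internal edge $e$ (in which case we contract $e$). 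These two operations should be mutually inverse and each flip $i(T)$ by $\pm 1$. The resulting fixed points biject with ordered set partitions $\mathcal{P} = (B_1, \ldots, B_\ell)$ of $[n]$ whose blocks are all independent in $\Gamma \smallsetminus \{P, Q\}$, giving
\[
\sum_{T \in \tree(\Gamma)} (-1)^{i(T)} \;=\; \pm \sum_{\mathcal{P}} (-1)^{n - \ell(\mathcal{P})}.
\]

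The second involution, a ``left-right involution'' $\varphi^{\leftrightarrow}$ on these graphical OSPs, generalizes the merge-split involution $\invOSP$ of Definition \ref{def:invOSP}. I would fix a total order on $[n]$ and, given $\mathcal{P}$, scan blocks left to right looking for the first position where a split (of $\min B_i$ into a new singleton preceding $B_i$) or a merge (of a preceding singleton $\{m\}$ into $B_i$) is possible, where now ``merge'' additionally requires that $\{m\} \cup B_i$ remain independent in $\Gamma \smallsetminus \{P, Q\}$. Each such operation toggles $\ell(\mathcal{P})$ by $\pm 1$, flipping the sign $(-1)^{n - \ell(\mathcal{P})}$. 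The fixed points are the graphical OSPs admitting no such operation, and they should be characterized by the ``height function'' of an acyclic orientation: each block $B_i$ is the set of vertices at height $i-1$, each block is independent, and each $v \in B_i$ has a neighbor in $B_{i-1}$ (which blocks the corresponding merge). This is precisely Stanley's classical bijection $|\ACO(G)| = (-1)^{|V|}\chi_G(-1)$ in combinatorial form, giving $\sum_{\mathcal{P}} (-1)^{n - \ell(\mathcal{P})} = |\ACO(\Gamma \smallsetminus \{P, Q\})|$. Composing the two steps then yields the theorem (up to the overall sign convention).

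The principal obstacle is the precise design of both involutions so that their split/contract or split/merge pairings are consistent, well-defined, and interact correctly with the independence constraint from $\Gamma$. For $\varphi^{\updownarrow}$, a naive site-based rule runs into subtleties when $\Gamma$-stability forces other marks on the same tree-vertex to be redistributed asymmetrically during a split, so the canonical rule must simultaneously respect the tree structure, the chosen vertex ordering, and $\Gamma$-stability. The dominance of $P$ and $Q$ should simplify this by automatically securing stability at $v_P$ and $v_Q$ and localizing the subtle stability conditions to branch leaves and to spine vertices carrying non-independent mark sets. For $\varphi^{\leftrightarrow}$, the graph-theoretic independence constraint can block merges that would otherwise be legal in the classical setting, so the ``leftmost legal operation'' rule must be phrased so that the operation remains an involution and so that its fixed points match exactly the height-OSPs of acyclic orientations; verifying this last matching is where the classical content of Stanley's theorem enters.
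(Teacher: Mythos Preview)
Your high-level architecture matches the paper's, but the second involution as you describe it is not an involution, and this is a genuine gap rather than a detail to be filled in. Consider three vertices $m<c<d$ with $m\sim d$ in $\Gamma\smallsetminus\{P,Q\}$ but $m\not\sim c$ and $c\not\sim d$. Under your rule ``scan left to right for the first block where a split or an independence-preserving merge is possible'':
\[
(\{m\},\{c,d\}) \longmapsto (\{m\},\{c\},\{d\}) \longmapsto (\{m,c\},\{d\}) \longmapsto (\{m\},\{c\},\{d\}),
\]
so two distinct OSPs map to the same image and $\varphi^{\leftrightarrow}$ is not even a bijection. The independence constraint breaks the classical merge/split pairing precisely because when the natural merge is illegal you are forced to split, but the reverse of that split is a \emph{different} (legal) merge. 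Relatedly, your claimed fixed set---the ``height OSPs'' with non-singleton blocks---cannot arise from any merge/split rule, since a non-singleton block can always be split.

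The paper resolves this with a genuinely different mechanism. For each leaf $c$ at a spine vertex $v$, it defines a \emph{disconnected region} $\DR_c$ consisting of all leaves at consecutive vertices to the left of $v$ none of which is adjacent to $c$ in $\Gamma$, and sets $m_c=\min\DR_c$. The merge/split decision at the leftmost bad vertex is then governed by the conditions $m_{c_i}\lessgtr c_i$ for \emph{each} leaf $c_i$ there, and in the split case the leaf removed is the smallest $c_i$ with $m_{c_i}>c_i$, which need not be the block minimum. The fixed points are therefore all-singleton caterpillars satisfying $m_c>c$ for every $c$, and these are matched to acyclic orientations by iteratively removing the largest source---not by the height function. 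Your first involution is also underspecified (``split $v$ along this edge'' does not determine how the other marks and branches at $v$ are distributed); the paper instead contracts or expands the entire bundle of off-spine edges at the first spine vertex whose off-spine leaf set is dependent in $\Gamma$, which sidesteps that ambiguity.
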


To set up the proof, we first recall (Remark \ref{rmk:dual-trees}) that the dual trees we are summing over are precisely those such that for every internal edge $e$, each of the branches $B,B'$ on either side of $e$ contains a pair of leaves whose corresponding vertices in $\Gamma$ are joined by an edge.

Since in any tree there is a unique path between any pair of vertices, we may draw our dual trees as a path from $P$ to $Q$, with branches coming off vertices along this path.  The above condition means that any branch off the path contains some pair of leaves corresponding to a pair of vertices connected by an edge in $\Gamma$. In particular, every edge along the path from the vertex containing $P$ to the vertex containing $Q$ has this property satisfied, since $P$ and $Q$ are dominant in $\Gamma$. An example is shown below; on the left is the dual tree, and on the right is the graph $\Gamma \smallsetminus \{P,Q\}$.

\begin{center}
\begin{asy}
import math;
import graph;
unitsize(1cm);

draw((0,0)--(4,0));

label("$P$",(0,0),W);
label("$Q$",(4,0),E);

draw((1,0)--(1,-1)--(0.5,-1.5));
draw((1,-1)--(1.5,-1.5)--(2,-2));
draw((1.5,-1.5)--(1,-2));
draw((2,0)--(2,-0.5));
draw((2.7,-0.5)--(3,0)--(3.3,-0.5));

label("$2$",(0.5,-1.5),S);
label("$4$",(1,-2),S);
label("$6$",(2,-2),S);
label("$3$",(2,-0.5),S);
label("$1$",(2.7,-0.5),S);
label("$5$",(3.3,-0.5),S);

draw((6,0)--(7,0)--(6.5,-0.8)--cycle);
draw((8,0)--(8.5,-0.5));

dot((6,0));
dot((7,0));
dot((6.5,-0.8));
dot((8,0));
dot((8.5,-0.5));
dot((9,0));

label("$1$",(6,0),W);
label("$2$",(7,0),E);
label("$4$",(6.5,-0.8),S);
label("$3$",(8,0),N);
label("$5$",(8.5,-0.5),SE);
label("$6$",(9,0),E);
\end{asy}    
\end{center}

We now construct a sign reversing involution $\invheavy$ on this set of trees, which we call $\Trees(\Gamma)$, in two steps.

\begin{defn}\label{def:up-down}  The \textbf{up-down involution} $\invheavy$ is defined on $\tree(\Gamma)$ as follows. Let $v_1, \ldots, v_r$ be the vertices along the path $p$ from $P$ to $Q$. For each $i$, let $L_i$ be the set of leaves attached to the branches at $v_i$ off the path from $P$ to $Q$.  If any $L_i$ has a pair of leaves connected by an edge in $\Gamma$, we choose the smallest such $i$ and apply the following process to define $\invheavy(T)$:
\begin{itemize}
    \item \textbf{Contraction:} If there is only one edge attached to $v_i$ besides the two along the path $p$, contract that edge.
    \item \textbf{Expansion:} If there are at least two edges attached to $v_i$ besides the two along the path $p$, let $u_1,\ldots,u_k$ be the vertices connected to $v_i$ not along the path $p$.  Disconnect all the $u_j$'s from $v_i$, connect them all to a new vertex $v'$, and connect $v'$ to $v_i$.
\end{itemize}

If no such $L_i$ exists, we define $\invheavy(T)=T$.

\begin{center}
\begin{asy}
import math;
import graph;
unitsize(1cm);

draw((0,0)--(4,0));

dot((1,0));

label("$v_i$",(1,0),N);

label("$P$",(0,0),W);
label("$Q$",(4,0),E);

draw((1,0)--(1,-1)--(0.5,-1.5));
draw((1,-1)--(1.5,-1.5)--(1.8,-2));
draw((1.5,-1.5)--(1,-2));
draw((2,0)--(2,-0.5));
draw((2.7,-0.5)--(3,0)--(3.3,-0.5));

label("$2$",(0.5,-1.5),S);
label("$4$",(1,-2),S);
label("$6$",(1.8,-2),S);
label("$3$",(2,-0.5),S);
label("$1$",(2.7,-0.5),S);
label("$5$",(3.3,-0.5),S);

draw((5,-0.25)--(6,-0.25),arrow=Arrows(TeXHead));

currentpicture=shift((-7,0))*currentpicture;

draw((0,0)--(4,0));

dot((1,0));

label("$v_i$",(1,0),N);

label("$P$",(0,0),W);
label("$Q$",(4,0),E);

draw((1,0)--(0.5,-0.5));
draw((1,0)--(1.5,-0.5));
draw((1,-1)--(1.5,-0.5)--(1.8,-1));
draw((2,0)--(2,-0.5));
draw((2.7,-0.5)--(3,0)--(3.3,-0.5));

label("$2$",(0.5,-0.5),S);
label("$4$",(1,-1),S);
label("$6$",(1.8,-1),S);
label("$3$",(2,-0.5),S);
label("$1$",(2.7,-0.5),S);
label("$5$",(3.3,-0.5),S);

\end{asy}    
\end{center}
\end{defn}

Then $\invheavy$ is a sign-reversing involution that cancels all terms in the summation other than the trees defined below.

\begin{defn}
    Let $T \in \tree(\Gamma)$ and let $v_1, \ldots, v_r$ be as above. We say $T$ is a \textbf{$\Gamma$-millipede} if each $v_i$ is attached only to leaves, and these leaves $L_i$ form an independent set in $\Gamma$. We write $\millipedes(\Gamma)$ for the set of $\Gamma$-millipedes.
\end{defn}
We note that a $\Gamma$-millipede is equivalent to the data of an ordered set partition of the vertices of $\Gamma \smallsetminus \{P, Q\}$, for which each block is an independent set.  An example of a $\Gamma$-millipede is shown below, for $\Gamma\smallsetminus \{P,Q\}$ the graph on the right.

\begin{center}
\begin{asy}
import math;
import graph;
unitsize(1cm);

draw((0,0)--(4,0));

label("$P$",(0,0),W);
label("$Q$",(4,0),E);

draw((0.5,-0.5)--(1,0)--(1.5,-0.5));
draw((1,0)--(1,-0.5));
draw((2,0)--(2,-0.5));
draw((2.7,-0.5)--(3,0)--(3.3,-0.5));

label("$2$",(0.5,-0.5),S);
label("$3$",(1.5,-0.5),S);
label("$4$",(2,-0.5),S);
label("$1$",(2.7,-0.5),S);
label("$5$",(3.3,-0.5),S);
label("$6$",(1,-0.5),S);

draw((6,0)--(7,0)--(6.5,-0.8)--cycle);
draw((8,0)--(8.5,-0.5));

dot((6,0));
dot((7,0));
dot((6.5,-0.8));
dot((8,0));
dot((8.5,-0.5));
dot((9,0));

label("$1$",(6,0),W);
label("$2$",(7,0),E);
label("$4$",(6.5,-0.8),S);
label("$3$",(8,0),N);
label("$5$",(8.5,-0.5),SE);
label("$6$",(9,0),E);

\end{asy} 
\end{center}

\begin{remark}[Up-down involutions on simplicially stable spaces]\label{rmk:up-down-simplicial}
A similar up-down involution makes sense on any simplicially-stable space $\Mbar_{0, \mathcal{K}}$ with two isolated vertices $P, Q$ (i.e. marked points that cannot collide with any other point). We define $v_1, \ldots, v_r$ and $L_1, \ldots, L_r$ as in Definition \ref{def:up-down}. If any $L_i$ is a non-face of $\mathcal{K}$, we take the smallest such $i$ and apply the two cases above. The fixed points correspond to ordered set partitions of $\mathcal{K} \smallsetminus \{P, Q\}$ for which every block is a face of $\mathcal{K}$.
\end{remark}

We now define a sign-reversing involution on $\millipedes(\Gamma)$.

\begin{defn}\label{def:disconnected-region}
Let $T \in \millipedes(\Gamma)$, let $v_1, \ldots, v_r$ be as above, and let $c$ be a leaf attached at some $v_i$. Let $j$ be minimal such that $j \leq i$ and every leaf of $v_j, \ldots, v_{i-1}$ is disconnected from $c \in \Gamma$. The {\bf disconnected region} of $c$, denoted $\DR_c(T)$, is the set of leaves attached to $v_j, \ldots, v_{i-1}$. We let 
\[
m_c = m_c(T) = \min(\DR_c(T))
\]
be its minimum, or $m_c = \infty$ if $\DR_c(T)$ is empty.
\end{defn}

\begin{example}\label{ex:left-right}

In the diagram below, $\DR_2=\{\}$, $\DR_4=\{\}$, $\DR_5=\{2,4\}$, $\DR_1=\{5\}$, $\DR_3=\{\}$, $\DR_6=\{2,4,5\}$.  So, for instance, $m_2=m_4=m_3=\infty$, $m_1=5$, $m_5=2$, and $m_6=2$.

\begin{center}
\begin{asy}
import math;
import graph;
unitsize(1cm);

draw((0,0)--(5,0));

label("$P$",(0,0),W);
label("$Q$",(5,0),E);

draw((1,0)--(1,-0.5));
draw((2,0)--(2,-0.5));
draw((3,0)--(3,-0.5));
draw((3.75,-0.5)--(4,0)--(4.25,-0.5));
draw((4,0)--(4,-0.5));

label("$2$",(1,-0.5),S);
label("$4$",(2,-0.5),S);
label("$5$",(3,-0.5),S);
label("$1$",(3.75,-0.5),S);
label("$3$",(4,-0.5),S);
label("$6$",(4.25,-0.5),S);

draw((6+1,0)--(7+1,0)--(6.5+1,-0.8)--cycle);
draw((8+1,0)--(8.5+1,-0.8));

dot((6+1,0));
dot((7+1,0));
dot((6.5+1,-0.8));
dot((8+1,0));
dot((8.5+1,-0.8));
dot((9+1,0));

label("$1$",(6+1,0),W);
label("$2$",(7+1,0),E);
label("$4$",(6.5+1,-0.8),S);
label("$3$",(8+1,0),W);
label("$5$",(8.5+1,-0.8),S);
label("$6$",(9+1,0),E);

\end{asy} 
\end{center}

\end{example}

Note that $c$ is disconnected in $\Gamma$ from every element in $\DR_c(T)$. 

\begin{remark}\label{rmk:DR}
    Let $v$ be as in Definition \ref{def:disconnected-region} with leaves $c_1 < \cdots < c_k$. If $m_{c_i} < \infty$ for some $i$, there must be a vertex $w$ to the left of $v$. If moreover $m_{c_i} < \infty$ for all $i$, then the leaves attached at $w$ form an independent set with $c_1, \ldots, c_k$.
\end{remark}

\begin{defn}
    For a tree $T\in \millipedes(\Gamma)$, we say a vertex $v$ is a \textbf{bad vertex} if either it has more than one leaf attached (ignoring $P$ and $Q$), or it has a singleton leaf $c$ attached and has $m_c<c$.
\end{defn}

The involution considers the leftmost bad vertex and performs an operation to form a new tree as follows.

\begin{defn}\label{defn:invOSPgamma}
    We define the \textbf{left-right involution}  $\varphi^\leftrightarrows:\millipedes(\Gamma)\to \millipedes(\Gamma)$ as follows.  Let $T\in \millipedes(\Gamma)$.  
    If there are no bad vertices along the path from $P$ to $Q$, define $\varphi^\leftrightarrows(T)=T$.  Otherwise, let $v$ be the leftmost bad vertex along the path from $P$ to $Q$ and let $c_1<\cdots<c_k$ be the leaves attached at $v$.  
    \begin{itemize}
        \item \textbf{Merge case:} If $m_{c_i}<c_i$ for all $i$, there is a singleton leaf $c_0$ attached at the vertex $v_0$ just left of $v$ along the path. We define $\varphi^\leftrightarrows(T)$ to be the tree formed by merging $c_0$ with $c_1,\ldots,c_n$ so they are all attached at $v$ (and deleting vertex $v_0$).
        \item \textbf{Split case:} Otherwise, let $i$ be the smallest index such that $m_{c_i}>c_i$.  Then we remove leaf $c_i$ from attaching at $v$ and create a new vertex $v_0$ just before $v$ that we attach a singleton leaf labeled $c_i$ to, to form $\varphi^\leftrightarrows(T)$.
    \end{itemize}
\end{defn}

\begin{example}
    Continuing Example \ref{ex:left-right}, the leftmost bad vertex is the $5$, and $m_5 < 5$. We obtain, from the merge case:

\begin{center}
\begin{asy}
import math;
import graph;
unitsize(1cm);

draw((0,0)--(5,0));

label("$P$",(0,0),W);
label("$Q$",(5,0),E);

draw((1,0)--(1,-0.5));
draw((2,0)--(2,-0.5));
draw((3,0)--(3,-0.5));
draw((3.75,-0.5)--(4,0)--(4.25,-0.5));
draw((4,0)--(4,-0.5));

label("$2$",(1,-0.5),S);
label("$4$",(2,-0.5),S);
label("$5$",(3,-0.5),S);
label("$1$",(3.75,-0.5),S);
label("$3$",(4,-0.5),S);
label("$6$",(4.25,-0.5),S);

draw((6,-0.25)--(7,-0.25), arrow=Arrows(TeXHead));

currentpicture=shift((-8,0))*currentpicture;

draw((0,0)--(4,0));

label("$P$",(0,0),W);
label("$Q$",(4,0),E);

draw((1,0)--(1,-0.5));
draw((1.75,-0.5)--(2,0)--(2.25,-0.5));
draw((3, -0.5)--(3, 0));
draw((2.75,-0.5)--(3,0)--(3.25,-0.5));

label("$2$",(1,-0.5),S);
label("$4$",(1.75,-0.5),S);
label("$5$",(2.25,-0.5),S);
label("$1$",(2.75,-0.5),S);
label("$3$",(3,-0.5),S);
label("$6$",(3.25,-0.5),S);

\end{asy} 
\end{center}

Note that on the right we have $\DR_2=\{\} = \DR_4$, so the $45$ vertex is the leftmost bad vertex and has $m_4 = \infty$ (the split case).

\end{example}

\begin{example}
With the same graph $\Gamma$ as in the previous example \ref{ex:left-right}, we have:

\begin{center}
\begin{asy}
import math;
import graph;
unitsize(1cm);

draw((0,0)--(5,0));

label("$P$",(0,0),W);
label("$Q$",(5,0),E);

draw((1,0)--(1,-0.5));
draw((2,0)--(2,-0.5));
draw((3,0)--(3,-0.5));
draw((3.75,-0.5)--(4,0)--(4.25,-0.5));
draw((4,0)--(4,-0.5));

label("$5$",(1,-0.5),S);
label("$1$",(2,-0.5),S);
label("$2$",(3,-0.5),S);
label("$3$",(3.75,-0.5),S);
label("$4$",(4,-0.5),S);
label("$6$",(4.25,-0.5),S);

draw((6,-0.25)--(7,-0.25), arrow=Arrows(TeXHead));

currentpicture=shift((-8,0))*currentpicture;

draw((0,0)--(6,0));

label("$P$",(0,0),W);
label("$Q$",(6,0),E);

draw((1,0)--(1,-0.5));
draw((2,0)--(2,-0.5));
draw((3,0)--(3,-0.5));
draw((4,0)--(4,-0.5));
draw((4.75,-0.5)--(5,0)--(5.25,-0.5));

label("$5$",(1,-0.5),S);
label("$1$",(2,-0.5),S);
label("$2$",(3,-0.5),S);
label("$4$",(4,-0.5),S);
label("$3$",(4.75,-0.5),S);
label("$6$",(5.25,-0.5),S);

\end{asy} 
\end{center}
On the left, we have $\DR_5 = \{\}$, $\DR_1 = \{5\}$, $\DR_2 = \{\}$, $\DR_3 = \{1, 2\}$, $\DR_4 = \{\}$ and $\DR_6 = \{1, 2, 5\}$. The leftmost bad vertex is the $346$ and has $m_3 < 3$, $m_4 > 4$ and $m_6 < 6$ (the split case). On the right, we now have $\DR_4 = \{\}, \DR_3 = \{1,2,4\}$ and $\DR_6 = \{1,2,4,5\}$, hence $m_3 < 3$ and $m_6 < 6$ (the merge case). Note that in this example, the entry $4$ that splits and merges is not the minimum of its block.
\end{example}

\begin{remark}[Recovering the merge-split involution $\invOSP$ in Losev-Manin case]\label{rmk:recover-OSP-from-graph-case}
    In the special case where $\Gamma$ has no edges, $\varphi^\leftrightarrows$ recovers the merge-split involution $\invOSP$ on ordered set partitions (Definition \ref{def:invOSP}). To see this, we observe that $\DR_c(T)$ is then the complete set of leaves left of $c$, so the condition $m_c > c$ says $c$ is smaller than every leaf attached to vertices to its left. If the leftmost bad vertex has leaves $c_1 < \cdots < c_k$, the merge and split conditions therefore depend only on $c_1$, where they reduce to those of $\invOSP$.

    Note that this agrees with the Losev-Manin case, because there are two points, the dominant points $P$ and $Q$, that cannot collide with any vertex (two heavy points) and the rest can occupy the same spaces (light points).
\end{remark}

\begin{lemma}\label{lem:well-defined}
The map $\varphi^\leftrightarrows$ is well defined.
\end{lemma}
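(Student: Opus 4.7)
The plan is to check that $\varphi^\leftrightarrows$ is defined unambiguously in both cases and that the output still lies in $\millipedes(\Gamma)$. The merge case is where the actual content is; the split case is almost formal.

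For the merge case, I need to verify three things: (i) the vertex $v_0 := v_{i-1}$ exists, (ii) $v_0$ has a unique non-$\{P,Q\}$ leaf, which I then call $c_0$, and (iii) after merging, $\{c_0, c_1, \ldots, c_k\}$ is an independent set in $\Gamma$. Existence of $v_0$ follows because $m_{c_1} < c_1$ forces $\DR_{c_1}(T) \neq \emptyset$, so the index $j_1$ from Definition~\ref{def:disconnected-region} satisfies $j_1 < i$; in particular $i \geq 2$. For uniqueness in (ii), I use that $v$ is the \emph{leftmost} bad vertex, so $v_0$ is not bad and hence carries at most one non-$\{P,Q\}$ leaf. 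On the other hand, $T$ is dual to a stable curve, so $v_0$ --- whether it is an interior path vertex or the endpoint $v_1$ with $P$ attached --- must support at least one non-$\{P,Q\}$ leaf to have three special points (two path edges plus one leaf in the interior case; one path edge plus $P$ plus one more leaf in the endpoint case). Hence $v_0$ has exactly one such leaf, giving $c_0$ unambiguously.

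For the independence in (iii), the hypothesis of the merge case says $m_{c_\ell} < c_\ell$ for every $\ell$, so for each $\ell$ we have $j_\ell < i$, meaning $v_{i-1}$ lies in the range $v_{j_\ell}, \ldots, v_{i-1}$ of Definition~\ref{def:disconnected-region}. The definition then forces every leaf at $v_{i-1}$ --- and in particular $c_0$ --- to be disconnected from $c_\ell$ in $\Gamma$. The $c_\ell$ themselves already form an independent set since they sat together at $v$ in the original millipede, so $\{c_0, c_1, \ldots, c_k\}$ is independent. Dual-tree stability after contracting the edge $v_0 v$ is immediate: the merged vertex retains at least $k + 1 \geq 2$ leaves plus its surviving path edges.

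The split case is more mechanical. Since the merge condition fails, there is some $\ell$ with $m_{c_\ell} \geq c_\ell$; but $m_{c_\ell} \neq c_\ell$ by construction of $\DR$, so in fact $m_{c_\ell} > c_\ell$ and the smallest such $\ell = i$ is well defined. After splitting, $v$ has leaf set $\{c_1, \ldots, \widehat{c_i}, \ldots, c_k\}$ (a subset of the original independent set, hence independent) and the new vertex $v_0$ carries just $\{c_i\}$ (trivially independent) together with two path edges, giving three special points. The internal-edge conditions defining $\tree(\Gamma)$ remain satisfied because $P$ and $Q$ are dominant, so on each side of every internal edge one finds a connected pair of leaves via $P$ or $Q$. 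Hence the output of $\varphi^\leftrightarrows$ is again a $\Gamma$-millipede. The main obstacle throughout is the argument in paragraph two that $v_{i-1}$ is forced to have exactly one non-$\{P,Q\}$ leaf and that this leaf is automatically disconnected from each $c_\ell$; everything else is bookkeeping.
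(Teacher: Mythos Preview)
Your proof is correct and follows essentially the same approach as the paper's. The paper's proof is two sentences: it cites Remark~\ref{rmk:DR} for the merge case (existence of the left vertex $w$ and independence of its leaves with $c_1,\ldots,c_k$) and notes for the split case that a subset of an independent set is independent. Your argument is a more detailed, self-contained version of this: you explicitly verify existence and uniqueness of $c_0$ (using that $v$ is the leftmost bad vertex together with stability), independence of $\{c_0,c_1,\ldots,c_k\}$ (reproving the content of Remark~\ref{rmk:DR}), and dual-tree stability of the output in both cases, which the paper leaves implicit.
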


\begin{proof}
The merge case is well-defined and remains a millipede by Remark \ref{rmk:DR}.  The split case is always well-defined and also forms a millipede output because a subset of an independent set is an independent set. 
\end{proof}

\begin{lemma}\label{lem:involution}
The map $\varphi^\leftrightarrows$ is an involution.
\end{lemma}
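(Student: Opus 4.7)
The plan is to verify, in each of the merge and split cases, that one further application of $\varphi^\leftrightarrows$ recovers $T$. A general observation I will use throughout is that each operation only modifies the tree in a neighborhood of the leftmost bad vertex $v$: all vertices strictly to the left are unchanged, and since their disconnected regions depend only on material even further left, they remain non-bad in the output. So the leftmost bad vertex of $\varphi^\leftrightarrows(T)$ can only be the newly-created or altered vertex near $v$'s original position, and I only need to verify that this new vertex is bad, falls into the opposite case of the involution, and that applying the opposite operation reverses the first.

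In the \textbf{merge case}, $v$ has leaves $c_1 < \cdots < c_k$ with $m_{c_i}(T) < c_i$ for all $i$, and the merged vertex $v'$ in $T' := \varphi^\leftrightarrows(T)$ has leaves $\{c_0, c_1, \ldots, c_k\}$ where $c_0$ was the singleton at the preceding vertex $v_0$. Since $v'$ has multiple leaves it is bad, and hence leftmost bad. To see that the split case applies at $v'$ and splits off precisely $c_0$, I will argue: (a) $m_{c_0}(T') = m_{c_0}(T) > c_0$, because $v_0$ was not bad in $T$ and the disconnected region of $c_0$ extends back from the same place in $T$ and $T'$; and (b) for any leaf $c_j$ of $v'$ with $c_j < c_0$, the value $m_{c_j}(T) < c_j < c_0$ means that removing $c_0$ from $\DR_{c_j}$ does not change its minimum, so $m_{c_j}(T') = m_{c_j}(T) < c_j$. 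Hence $c_0$ is the smallest leaf of $v'$ satisfying the split condition, and splitting it off returns $T$.

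In the \textbf{split case}, $i_0$ is the smallest index with $m_{c_{i_0}}(T) > c_{i_0}$. After splitting, the new singleton $w = \{c_{i_0}\}$ has an unchanged disconnected region, so $m_{c_{i_0}}(T') > c_{i_0}$ and $w$ is not bad. For every other leaf $c_j$ of $v' = v \smallsetminus \{c_{i_0}\}$, the new singleton $c_{i_0}$ lies in $\DR_{c_j}(T')$, since $c_j$ and $c_{i_0}$ were leaves of the same millipede vertex in $T$ and are therefore disconnected in $\Gamma$; consequently
\[
m_{c_j}(T') \;\leq\; \min\bigl(m_{c_j}(T),\; c_{i_0}\bigr) \;<\; c_j,
\]
where the bound $m_{c_j}(T) < c_j$ handles $j < i_0$ by the minimality of $i_0$ and the bound $c_{i_0} < c_j$ handles $j > i_0$. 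Thus $v'$ is bad (even when $k = 2$ makes it a singleton), is the leftmost bad vertex of $T'$, and satisfies the merge condition; merging $w$ back into $v'$ recovers $T$.

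The main obstacle I anticipate is this last calculation—verifying that the merge condition holds at every leaf of $v'$ after a split. This is the step where the minimality of $i_0$ does real work: it forces $m_{c_j}(T) < c_j$ for $j < i_0$, and simultaneously makes $c_{i_0}$ available as a small witness inside $\DR_{c_j}(T')$ for $j > i_0$. Everything else reduces to bookkeeping about how disconnected regions shift under a purely local modification of the tree.
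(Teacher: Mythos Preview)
Your proposal is correct and follows essentially the same approach as the paper's proof: both proceed by case analysis on merge versus split, tracking how $m_{c_j}$ changes for the leaves at the affected vertex and using the minimality of $i_0$ (resp.\ the non-badness of $v_0$) in exactly the same way. Your explicit ``locality'' observation, that vertices strictly to the left remain non-bad because their disconnected regions are unchanged, is left implicit in the paper but is the same underlying reasoning.
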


\begin{proof}
    Let $T\in \millipedes(\Gamma)$ and $T'=\varphi^\leftrightarrows(T)$.  We wish to show $\varphi^\leftrightarrows(T')=T$.  If $T'=T$ we are done.  Otherwise, let $v$ be the leftmost bad vertex in $T$, with leaves $c_1,\ldots,c_k$ attached.

    \textbf{Case 1 (Merge case):} Suppose $m_{c_i}<c_i$ for all $i=1,\ldots,k$.  Then each disconnected region $\DR_{c_i}$ is nonempty, so the leaf $c_0$ attached to the vertex $w$ just left of $v$ exists and forms an independent set with $c_1,\ldots,c_k$ in the graph. Since we are assuming $v$ is the leftmost bad vertex, we also have $m_{c_0}>c_0$ (otherwise $w$ would be a bad vertex to the left of $v$).  

    Now, by the definition of $\varphi^\leftrightarrows$, $T'$ is formed by merging $c_0$ with the leaves attached at $v$, by moving $c_0$ to be attached at $v$ and deleting the vertex $v_0$ that $c_0$ was attached to.  Suppose the new least to greatest ordering of the vertices is $c_1,c_2,\ldots,c_i,c_0,c_{i+1},\ldots,c_k$.  Then the new minima $m'_{c_j}=m_{c_j}(T')$ are unchanged for $j\le i$ since then $m_{c_j}\le c_j<c_0$, and so $m'_{c_j}=m_{c_j}$. It follows that $c_0$ is the smallest leaf attached to $v$ such that $m_{c_0}>c_0$, so $\varphi^\leftrightarrows(T')$ is formed by splitting off $c_0$ to the left again, returning $T$.

    \textbf{Case 2 (Split case):} Suppose some $m_{c_j}>c_j$, and let $i$ be the smallest index such that $m_{c_i}>c_i$.  Then $T'=\varphi^\leftrightarrows(T)$ is formed by splitting off $c_i$ to be a singleton leaf just to the left of $v$.  Then we still have $m'_{c_i}=m_{c_i}(T')>c_i$ in $T'$ because its disconnected region has not changed.  Thus the new vertex is not bad. 

    Now, consider $m_{c_j}'=m_{c_j}(T')$ for $j=1,\ldots,i-1$ and for $j=i+1,\ldots,k$.  For $j<i$, since $c_i>c_j>m_{c_j}$ we have that $m'_{c_j}=m_{c_j}$ is still less than $c_j$.  For $j>i$, we now have that $c_i<c_j$ is to the left of $c_j$ in its disconnected region, and so $m'_{c_j}<c_j$.  Thus all leaves at $v$ now satisfy $m'_{c_j}<c_j$, so $v$ is again a bad vertex.  Thus to apply $\varphi^\leftrightarrows$ we merge $c_i$ back into the set of leaves attached at $v$ again, and so $\varphi^\leftrightarrows(T')=T$.
\end{proof}

The next lemma is immediate from the definition of $\varphi^\leftrightarrows$.

\begin{lemma}\label{lem:fixed-points}
    The fixed points of $\varphi^\leftrightarrows$ are the trees in $\millipedes(\Gamma)$ that are caterpillars from $P$ to $Q$ with singleton leaf branches in between, such that for every leaf $c\neq 0,\infty$ we have $m_c>c$. 
\end{lemma}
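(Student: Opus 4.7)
The plan is to unpack Definition \ref{defn:invOSPgamma} directly. By construction, $\varphi^\leftrightarrows(T) = T$ precisely when the algorithm finds no bad vertex along the path from $P$ to $Q$, since otherwise either a merge or a split is performed and produces a strictly different tree. So the entire content of the lemma is to translate the condition ``no bad vertex along the path'' into the claimed caterpillar-with-singleton-leaves condition.

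To do this, I would negate the two clauses in the definition of bad vertex: $v$ is bad iff (i) it has more than one leaf attached (ignoring $P$ and $Q$), or (ii) it has a single attached leaf $c$ with $m_c < c$. Forbidding (i) at every path vertex $v_i$ is exactly the caterpillar condition that every branch off the $P$--$Q$ path is a singleton leaf. Forbidding (ii) gives $m_c \geq c$ for each such leaf; since $\DR_c(T)$ consists of leaves attached at vertices strictly to the left of $v_i$, we have $c \notin \DR_c(T)$, hence $m_c \neq c$, which sharpens the inequality to $m_c > c$. Interpreting $0$ and $\infty$ in the lemma statement as the distinguished dominating leaves $P$ and $Q$ (whose positions are fixed and lie outside the scope of the involution), this is exactly the stated condition, and both directions of the equivalence are now immediate.

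I expect no real obstacle, as the entire argument is definition-chasing. The only moment of care is the strict upgrade $m_c \geq c \Rightarrow m_c > c$ coming from $c \notin \DR_c(T)$, which is automatic from the definition of the disconnected region; everything else falls out by negating the bad-vertex conditions clause by clause.
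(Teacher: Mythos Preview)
Your proposal is correct and matches the paper's approach: the paper simply declares the lemma ``immediate from the definition of $\varphi^\leftrightarrows$'' and gives no proof, and your argument is exactly the definition-unpacking that justifies this. The one check you note---that a merge or split genuinely changes the tree---is clear since a merge deletes a vertex and a split creates one.
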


We write $\Fix(\varphi^\leftrightarrows)$ for the set of fixed points of the left-right involution as in the lemma above.

\begin{lemma}\label{lem:fixed-point-bijection}
    There is a bijection $\Fix(\varphi^\leftrightarrows)\to \ACO(\Gamma)$.
\end{lemma}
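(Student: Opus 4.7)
The plan is to construct an explicit bijection $\Phi \colon \Fix(\varphi^\leftrightarrows) \to \ACO(\Gamma \smallsetminus \{P, Q\})$ together with an inverse $\Psi$. For $\Phi$, I would send a fixed-point caterpillar with leaves $(c_1, \ldots, c_k)$ (listed in order from $P$ to $Q$) to the orientation of $\Gamma \smallsetminus \{P, Q\}$ in which every edge $\{c_i, c_j\}$ with $i < j$ is directed $c_j \to c_i$ (right-to-left along the caterpillar); this is automatically acyclic since it respects the linear order on positions. For $\Psi$, I would greedily build the ordering left-to-right: given $\omega \in \ACO(\Gamma \smallsetminus \{P, Q\})$, iteratively choose $c_i$ to be the \emph{largest} (by vertex label) sink of $\omega$ restricted to $V_i := V \smallsetminus (\{P, Q\} \cup \{c_1, \ldots, c_{i-1}\})$. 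This is always well-defined since every acyclic orientation of a nonempty graph has a sink.

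The key observation powering every verification is that, for $l < i$, the vertex $c_i$ is a sink of the restriction of $\omega = \Phi(\sigma)$ to $V_l$ if and only if each of $c_l, c_{l+1}, \ldots, c_{i-1}$ is disconnected from $c_i$ in $\Gamma$ --- exactly the contiguous-block condition saying $c_l \in \DR_{c_i}$. Using this, I would first show that $\Psi(\omega)$ is a fixed point: for any $c_l \in \DR_{c_i}$, all edges from $c_i$ into $V_l \smallsetminus V_i$ are absent in $\Gamma$, so the sink property of $c_i$ in $V_i$ upgrades to $V_l$, and maximality of $c_l$ among sinks of $V_l$ forces $c_l > c_i$, hence $m_{c_i} > c_i$. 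Both compositions are then the identity by the same observation run in each direction: for $\Phi \circ \Psi$, at step $i$ of the greedy construction, $c_i$ is a sink in $V_i$ containing every $c_j$ with $j > i$, so $\omega$ already orients $c_j \to c_i$, matching $\Phi(\Psi(\omega))$; for $\Psi \circ \Phi$, starting from a fixed point $\sigma$, any candidate sink $c_i$ of $\Phi(\sigma)|_{V_l}$ with $i > l$ satisfies $c_l \in \DR_{c_i}$, and the fixed-point condition $m_{c_i} > c_i$ forces $c_l > c_i$, so $c_l$ itself is the largest sink.

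The main obstacle I expect is identifying the correct inverse rule --- the ``largest sink'' choice is specifically designed so that the fixed-point condition $m_{c_i} > c_i$ aligns precisely with maximality. Once the sink-characterization above is set up, the remaining bookkeeping is really the same argument phrased in the two directions; the only subtlety is cleanly separating, for a would-be sink $c_i \in V_l$, the edges of $c_i$ reaching $V_l \smallsetminus V_i$ (where disconnection is the crucial hypothesis) from those reaching $V_i$ (where sinkhood is automatic).
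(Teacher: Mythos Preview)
Your proposal is correct and follows essentially the same approach as the paper's proof: the paper orients edges left-to-right along the caterpillar and builds the inverse by iteratively extracting the largest \emph{source}, whereas you orient right-to-left and extract the largest \emph{sink}, which is the same construction composed with the edge-reversal involution on $\ACO(\Gamma)$. The verification that the greedy output is a fixed point, and that the two maps are mutually inverse, is structurally identical in both versions.
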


\begin{proof}
    We define $f:\ACO(\Gamma)\to \Fix(\varphi^\leftrightarrows)$ as follows.  Given an acyclic orientation $\sigma$ of $\Gamma\{0,\infty\}$, first list the largest source, then remove it and list the largest source among the remaining edges, and so on, and make these the singletons in order along the path from $P$ to $Q$ to form a tree $T=f(\sigma)$.  
    
    We check that this map is well defined by showing $T$ satisfies the property of Lemma \ref{lem:fixed-points}.   Suppose $a$ is the largest source at some stage, in the deleted graph $\Gamma'$ formed by deleting the sources encountered before $a$, and $a<c$ with $c$ a largest source at a later stage.  If $c$ is not connected to $a$ or to any $b$ between $a$ and $c$, then $c$ is a larger source than $a$ in $\Gamma'$, a contradiction.  Thus if $a<c$ and $c$ is encountered later in the process, $c$ must be connected in $\Gamma$ to some $b$ weakly right of $a$ and left of $c$.

    We now have a well-defined map $f$.  To construct its inverse, define $g:\Fix(\varphi^\leftrightarrows)\to \ACO(\Gamma)$ by orienting each edge according to the left to right order in $T$ (that is, if $a$ is left of $b$ in $T$ and $a{-}b$ in $\Gamma$ then we orient the edge $a\to b$).  Note that this results in an acyclic graph due to the ordering, so $g$ is well-defined.  Note also that $g\circ f = \mathrm{id}$ by construction.

    Finally, we show $f\circ g = \mathrm{id}$.  Let $T\in \Fix(\varphi^\leftrightarrows)$ and let $\sigma=g(T)$.  Let $a$ be the first singleton in $T$; we claim that $a$ is the largest source of $\sigma$.  Indeed, suppose for contradiction that there were a larger source $c$ than $a$.  Then $c$ occurs later in $T$ than $a$.   Suppose there is some $b$ left of $c$ in $T$ (possibly equal to $a$) that is connected to $c$ in $\Gamma$.  Then by the definition of $\sigma=g(T)$, that edge is oriented towards $c$, contradicting the assumption that $c$ is a source.  Thus there is no such edge, but then $a$ and $c$ contradict the property of Lemma \ref{lem:fixed-points} and $T$ is not a fixed point, a contradiction. 

    Now, removing $a$ from the tree $T$ and the graph $\sigma$ and repeating this process shows that the the next leaf in $T$ is the largest remaining source and so on.  This completes the proof.
\end{proof}

Lemmas \ref{lem:well-defined}, \ref{lem:involution}, \ref{lem:fixed-points}, and \ref{lem:fixed-point-bijection} now together imply Theorem \ref{thm:graphs}.

\begin{remark}
    There is also a non-bijective proof as follows. First, use the $\invheavy$ bijection to reduce to the set of $\Gamma$-millipedes. The sum remaining is
    \[f(\Gamma) := \sum_{(L_1, \ldots, L_r)} (-1)^{r-1}\]
    ranging over ordered set partitions of $V(\Gamma)$ into independent sets. Fixing an edge $e = (v{-}w)$ of $\Gamma$ and writing $\Gamma - e$ and $\Gamma/e$ for the deletion and contraction of $e$, we check that
    \[f(\Gamma - e) = f(\Gamma/e) + f(\Gamma),\]
    which, along with $f(\Gamma \sqcup \{\bullet\}) = -f(\Gamma)$, implies $f(\Gamma) = (-1)^{|\Gamma|-1}\ACO(\Gamma)$. To see the first relation, we view a sequence $(L_1, \ldots, L_r)$ of independent sets of $\Gamma - e$ as either a sequence of independent sets of $\Gamma/e$, if $v$ and $w$ are in the same $L_i$, or as a sequence of independent sets of $\Gamma$ if $v$ and $w$ are in different $L_i$'s.
\end{remark}

\begin{remark} \label{rmk:all-ones-product}
    A priori, the alternating sum of strata does not have a `geometric' meaning, apart from giving the Euler characteristic of the boundary complex. In fact, our up-down involution shows that for many spaces, it can be interpreted as an intersection product, as follows.
\end{remark}

In the following proposition, let $\mathcal{K}$ be a simplicial complex on $[n]=\{1,2,\ldots,n\}$ with $n\ge 1$.  Let $\mathcal{K}\cup\{P,Q,R\}$ denote the simplicial complex formed by adding three isolated points $P,Q,R$.  We similarly write $\mathcal{K}\cup\{P,Q\}$, and so on. Let $\tree(\mathcal{K} \cup \{P, Q\})$ denote the set of $(\mathcal{K} \cup \{P, Q\})$-stable trees. For $T \in \tree(\mathcal{K} \cup \{P, Q\})$, let $i(T)$ be the number of internal edges of $T$. 

We now relate the alternating sum of strata on $\Mbar_{0,\mathcal{K}\cup\{P,Q\}}$ to the product of psi classes $\psi_1\psi_2\cdots\psi_n$ on $\Mbar_{0,\mathcal{K}\cup \{P,Q,R\}}$. 

\begin{prop} \label{prop:all-ones-product}
We have
\[
(-1)^{n-1} \sum_{T \in \tree(\mathcal{K} \cup \{P, Q\})} (-1)^{i(T)}
=
\int_{\Mbar_{0, \mathcal{K} \cup \{P, Q, R\}}}
\psi_1\psi_2\cdots\psi_n.
\]
\end{prop}
Note that, for graphical moduli spaces, the left-hand side is nonnegative by Theorem \ref{thm:graphs}.

\begin{proof}
Let $\OSP([n]; \mathcal{K})$ be the set of ordered set partitions of $[n]$ for which each block is a face of $\mathcal{K}$. Let $\mathfrak{P}_\mathcal{K}$ be the set of unordered set partitions of $\mathcal{K} \cup \{P, Q, R\}$ for which each block is a face of $\mathcal{K}$ (note that $P, Q$ and $R$ each form singleton blocks).

By applying the up-down involution on $\Mbar_{0, \mathcal{K}}$ (see Remark \ref{rmk:up-down-simplicial}), the left-hand side reduces to
\[
(-1)^{n-1}
\sum_{(L_1, \ldots, L_r) \in \OSP([n];\mathcal{K})} (-1)^{r-1}.
\]

On the other hand, the analog of Theorem \ref{thm:psi-formula-graph} for simplicially-stable spaces is
\[
\int_{\Mbar_{0, \mathcal{K}}} \prod_{i=1}^n \psi_i^{k_i} = 
\sum_{\mathcal{P} \in \mathfrak{P}_\mathcal{K}} (-1)^{n+\ell(\mathcal{P})} \int_{\Mbar_{0, \ell(\mathcal{P})}} \prod_{j = 1}^{\ell(\mathcal{P})} \psi_j^{k_{P_j} - |P_j| + 1}.
\]
Specializing $k_i = 0$ for $i = P, Q, R$ and $k_i = 1$ for each other $i$, we have $k_{P_j} = |P_j|$ for all blocks except the three singletons $P, Q, R$. Each of the inner integrals is then of the form
\[
\int_{\Mbar_{0, \ell(\mathcal{P})}} \prod_{j=1}^{\ell(\mathcal{P})-3} \psi_j = (\ell(\mathcal{P})-3)!\ .
\]
Ignoring the singletons $\{P\}, \{Q\}, \{R\}$, this is the number of ways to order the other blocks of $\mathcal{P}$, so
\begin{align*}
\int_{\Mbar_{0, \mathcal{K}}} \prod_{i=1}^n \psi_i^{k_i} 
&= 
\sum_{\mathcal{P} \in \mathfrak{P}_\mathcal{K}} (-1)^{n+\ell(\mathcal{P})} 
(\ell(\mathcal{P})-3)!{} \\
&=\sum_{(L_1, \ldots, L_r) \in \OSP([n];\mathcal{K})} (-1)^{n+r},
\end{align*}
and the claim follows.
\end{proof}
It would be interesting to find an evaluation of the alternating sum of strata for $\Mbar_{0, \mathcal{K}}$ in terms of properties of the simplicial complex $\mathcal{K}$ generalizing the count of acyclic orientations. (See Question \ref{question:simplicial}.)

\section{An SRI for a general multicolored \texorpdfstring{$\psi$}{} class intersection}\label{sec:SRI2}

In this section we give a sign reversing involution to give a positive formula for the general multicolored $\psi$ class intersection numbers.

   We start by introducing new notation for the formula in Lemma \ref{lem:multinomial-form} that enables us to take into account Remark \ref{rmk:coefficients-nonzero} and simplify the sum.  Suppose we are working in $\Mbar_{0,[r_1,\ldots,r_m]}$, so there are $r_1$ marked points of color $C^{(1)}$, $r_2$ marked points of color $C^{(2)}$, and so on, up to $r_m$ marked points of color $C^{(m)}$. Consider an arbitrary product of $\psi$ classes on $\Moduli_{0,[r_1,\dots,r_m]}$:
    \begin{equation}\label{eqn:psi-product}
        \int \left(\prod_{i=1}^{\ell_1}\psi_i^{k_i^{(1)}}\right)\left(\prod_{i=r_1+1}^{r_1+\ell_2}\psi_i^{k_i^{(2)}}\right)\cdots \left(\prod_{i=r_1+\cdots+r_{m-1}+1}^{r_1+\cdots+r_{m-1}+\ell_m}\psi_i^{k_i^{(m)}}\right).
    \end{equation} 
    Throughout this section we assume that for any $j$ we have $$0 < k_1^{(j)}\le k_2^{(j)}\le \cdots \le k_{\ell_j}^{(j)};$$ that is, the positive exponents on the $\psi$ classes in the product for each color are ordered to be increasing and associated with the first $\ell_j$ marked points of that color (which we may assume by symmetry of marked points within each color).
    We sometimes simply write $k_i$ rather than $k_i^{(j)}$, since the $j$ superscript merely indicates the color of $i$. For any $S\subseteq [n]$, we write $k_S := \sum_{i\in S} k_i$.

\begin{example}
    As a running example for this section, we consider the case where we have five colors of marked points, Red ($C^{(1)}=R$), Azure ($C^{(2)}=A$), Green ($C^{(3)}=G$), Yellow ($C^{(4)}=Y$), and Violet ($C^{(5)}=V$). Suppose the first six marked points are red (so $r_1=6$), then the next five are azure ($r_2=5$), the next four are green ($r_3=4$), the next two are yellow ($r_4=2$), and the next two are violet ($r_5=2$), for a total of $n=19$ marked points, forming the moduli space $\Mbar_{0,[6,5,4,2,2]}$. Written in the form of Equation \eqref{eqn:psi-product}, one $\psi$ class intersection product on this space is
    $$\int_{\Moduli_{0,[6,5,4,2,2]}} \psi_1^2\psi_2^3\psi_3^5 \cdot \psi_7^4 \cdot \psi_{18}\psi_{19}.$$
    We have $\ell_1=3$, $\ell_2=1$, $\ell_3=0$, $\ell_4=0$, $\ell_5=2$, since we have three $\psi$ classes on the red numbers, one on the azure, and two on the violet.  We also have $k_R=10$, $k_A=4$, $k_G=k_Y=0$, and $k_V=2$.
\end{example}

\begin{defn}\label{def:hue}
    Let $C$ be a color and consider a set partition of the marked points of
    color $C$ whose exponent in Equation \eqref{eqn:psi-product} is nonzero. We say each block $B$ of this partition is a {\bf hue} of $C$ and denote the hue $C_B$. We say $C_B$ {\bf matches} the color $C$ (and does not match any other color).
\end{defn}

\begin{defn}\label{def:decoration}
  A \textbf{decoration} for the product \eqref{eqn:psi-product} is a way of marking the numbers $1,2,\ldots,n$ with the following properties:
  \begin{itemize}
      \item The smallest number in each of the first three colors ($1, r_1+1$, and $r_1+r_2+1$) is crossed out. 
      The numbers $i$ with $k_i>0$ are boxed.  
      \item Fix a set partition $\mathcal{P}$ of the boxed numbers, with monochromatic blocks, which we call the {\bf hue partition}. 
      For each block $B$ of $\mathcal{P}$, assign the largest $|B|-1$ numbers of $B$ the hue $C_B$ and underline them.
      \item For each block $B$ of $\mathcal{P}$, assign the hue $C_B$ to exactly $k_B-(|B|-1)$ of the remaining numbers (that are not crossed out or assigned a hue). Thus a total of $k_B$ numbers have hue $C_B$.
      \item Underline some subset of the unboxed numbers whose hue matches their color.
  \end{itemize}
We write $u(D)$ for the count of underlined numbers of the decoration $D$, and we say the decoration has {\bf sign} $(-1)^{u(D)}$.  Letting $\mathbf{k}$ denote the sequence of exponents in \eqref{eqn:psi-product}, we write $\mathcal{D}(\mathbf{k})$ for the set of all decorations with parameters $\mathbf{k}$.
\end{defn}

Our sign-reversing involutions are defined on these decorations. Since the sign $(-1)^{u(D)}$ comes from the underlined entries, the involutions consist of modifying which numbers are underlined. The first involution deals with the \emph{unboxed} underlined numbers, i.e., those corresponding to marked points whose $\psi$ class does not appear in the product \eqref{eqn:psi-product}, while the second deals with the \emph{boxed} underlined numbers, which correspond to the $\psi$ classes appearing in \eqref{eqn:psi-product}. The fixed points, then, are a subset of the decorations with no underlinings. Note that this forces all of the hues in a fixed point to be singleton blocks.

When visualizing a decoration, we sort the numbers into rows by color, in ascending order, and write the hue assignments as subscripts, as in the following example.

\begin{example} \label{ex:decoration}
Consider our running example $\int \psi_1^2\psi_2^3\psi_3^5\cdot \psi_7^4\cdot \psi_{18}\psi_{19}$ on $\Mbar_{0,[6,5,4,2,2]}$. One valid decoration is shown below, with hue partition $\{1,3\},\{2\},\{7\},\{18,19\}$. The block $\{1,3\}$ forces us to assign $3$ the hue $R_{1,3}$ (and to underline $3$), and similarly for $19 \in \{18, 19\}$ (whose hue $V_{18,19}$ we write as $V$). In total, we must assign three numbers the hue $R_2$, seven the hue $R_{1,3}$ (including the forced subscript at $3$), four the hue $A_7$ (which we write below as $A$), and two the hue $V$ (including the forced subscript on $19$). A sample assignment is shown below. Finally, among the unboxed numbers whose hue matches their color (5, 6, 9, 11), we have selected some to underline.

\begin{center}
\begin{tabular}{cccccccc}
    $R$ & & $\boxed{\cancel{\phantom{.}{\color{red}1}\phantom{.}}}$ & $\boxed{\phantom{.}{\color{red}2}_{A}\phantom{.}}$ & $\boxed{\phantom{.}{{\color{red}\underline{3}}_{R_{1,3}}\phantom{.}}}$ & ${\color{red}4}_V$ & ${\color{red}5}_{R_{1,3}}$ & ${\color{red}\underline{6}}_{R_{2}}$ \\
    
    $A$ & & $\boxed{\cancel{\phantom{.}\color{blue}7\phantom{.}}}$ & ${\color{blue}8}_{R_{1,3}}$ & ${\color{blue}\underline{9}}_A$ & ${\color{blue}10}_{R_{1,3}}$ & ${\color{blue}\underline{11}}_A$ & \\
    
 $G$ & &    $\cancel{{\color{mygreen}12}}$ & ${\color{mygreen}13}_{R_2}$ & ${\color{mygreen}14}_A$ & ${\color{mygreen}15}_{R_{1,3}}$ & & \\
    
    $Y$ & & ${\color{y1}16}_{R_{2}}$ & ${\color{y1}17}_{R_{1,3}}$ & & & & \\
    
 $V$ & &  $\boxed{{\color{y2}18}_{R_{1,3}}}$ & $\boxed{{\color{y2}\underline{19}}_{V}}$ & & & &
\end{tabular}
\end{center}
\end{example}

We first show that decorations essentially directly encode Lemma \ref{lem:multinomial-form}, which we recall again here:

\begin{lemmnf}
Let $\Mbar_{0,[r_1,\dots,r_m]}$ be a multicolored space with $r_1 + \cdots + r_m = n$. We have
$$\int_{\Mbar_{0,[r_1,\ldots,r_m]}}\psi_1^{k_1}\psi_2^{k_2}\cdots \psi_n^{k_n}=\sum_{\mathcal{P}}(-1)^{\sum_{B\in \mathcal{P}}(|B|-1)}\binom{\ell(\mathcal{P})-3}{k_{B_1}-|B_1|+1,k_{B_2}-|B_2|+1,\ldots}$$    
where the sum is over all set partitions $\mathcal{P}$ of the marked points with monochromatic blocks. Recall that $k_B=\sum_{i\in B}k_i$, and we assume that when some $k_{B_j}-|B_j|+1$ is negative, the entire multinomial coefficient is defined to be $0$.
\end{lemmnf}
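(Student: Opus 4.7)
The plan is to deduce Lemma \ref{lem:multinomial-form} as a direct specialization of Theorem \ref{thm:psi-formula-graph}, combined with the classical top-degree $\psi$ class intersection formula on $\Mbar_{0,k}$.

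First, I would observe that since $\Mbar_{0,[r_1,\ldots,r_m]} = \Mbar_{0,\Gamma}$ where $\Gamma$ is the complete multipartite graph with parts $C^{(1)},\ldots,C^{(m)}$, the independent sets of $\Gamma$ are precisely the subsets of a single color class. Consequently, the set of admissible partitions $\mathfrak{P}_\Gamma$ appearing in Theorem \ref{thm:psi-formula-graph} is exactly the set of set partitions of $[n]$ whose blocks are each monochromatic, matching the index set of the sum in the lemma statement.

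Next I would invoke Theorem \ref{thm:psi-formula-graph} to write
\[
\int_{\Mbar_{0,[r_1,\ldots,r_m]}} \prod_{i=1}^n \psi_i^{k_i} = \sum_{\mathcal{P}} (-1)^{n+\ell(\mathcal{P})} \int_{\Mbar_{0,\ell(\mathcal{P})}} \prod_{j=1}^{\ell(\mathcal{P})} \psi_j^{k_{B_j} - |B_j| + 1},
\]
and then evaluate each inner integral. A quick dimension check confirms $\sum_j (k_{B_j} - |B_j|+1) = (n-3) - n + \ell(\mathcal{P}) = \ell(\mathcal{P})-3$, so the inner integral is a top-degree $\psi$ product on $\Mbar_{0,\ell(\mathcal{P})}$. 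By the well-known genus-zero formula (e.g., \cite{k:psi}), this equals the multinomial coefficient $\binom{\ell(\mathcal{P})-3}{k_{B_1}-|B_1|+1,\ldots,k_{B_{\ell(\mathcal{P})}}-|B_{\ell(\mathcal{P})}|+1}$ whenever all exponents are nonnegative, with the convention that the coefficient is zero otherwise (since then the integrand lies in degree exceeding the dimension of some forgetful fiber, or equivalently, a negative exponent $\psi$ class makes no sense and the corresponding term vanishes — matching the stated convention).

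Finally, I would rewrite the sign: since $n - \ell(\mathcal{P}) = \sum_{B \in \mathcal{P}} (|B|-1)$, we have
\[
(-1)^{n+\ell(\mathcal{P})} = (-1)^{n-\ell(\mathcal{P})} = (-1)^{\sum_{B\in \mathcal{P}}(|B|-1)},
\]
yielding exactly the formula in the lemma statement. There is no substantive obstacle here; the content of the lemma is essentially a repackaging of Theorem \ref{thm:psi-formula-graph} in the multicolored setting together with the genus-zero multinomial evaluation, so the only thing to verify carefully is the sign identity and the vanishing convention for multinomial coefficients with negative entries.
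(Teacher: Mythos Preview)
Your proposal is correct and follows essentially the same approach as the paper's proof: specialize Theorem~\ref{thm:psi-formula-graph} to the complete multipartite graph (so that admissible partitions are exactly those with monochromatic blocks), then evaluate each inner integral using the standard genus-zero multinomial formula. Your write-up is simply more detailed than the paper's, which compresses all of this into two sentences; the sign rewriting and dimension check you include are correct and helpful.
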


The set partitions in Lemma \ref{lem:multinomial-form} are related to hue partitions as follows.

\begin{defn}
    Given a decoration $D$, define the \textbf{extended hue partition} $\mathcal{P}_D$ of $\{1,2,\ldots,n\}$ as follows. For each block $B$ in the hue partition of the $\psi$ classes within a color $C^{(j)}$ of $D$, we associate the extended block $$B_D:=B\cup \{t: t\text{ has color } C^{(j)}, \text{ hue }C_B^{(j)},\text{ and is underlined in }D\},$$ and all other numbers in $\{1,2,\ldots,n\}$ are taken to be singleton blocks. 
\end{defn}

\begin{example}
    In the example above, the extended hue partition $\mathcal{P}_D$ has blocks $\{1,3\}$, $\{2,6\}$, $\{7,9,11\}$, $\{18,19\}$, and all other numbers are singleton blocks.
\end{example}

\begin{lemma}\label{lem:alternating-1}
    The $\psi$ class product \eqref{eqn:psi-product} is equal to $\sum_{D\in \mathcal{D}(\bf k)} (-1)^{u(D)}$.
\end{lemma}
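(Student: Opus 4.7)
The plan is to partition the sum $\sum_D (-1)^{u(D)}$ according to the extended hue partition map $D \mapsto \mathcal{P}_D$, and to show this recovers the sum in Lemma \ref{lem:multinomial-form} term-by-term.

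First I will verify the sign match: for any decoration $D$,
\[
u(D) = \sum_{B \in \mathcal{P}_D}(|B|-1).
\]
This follows from a local count in each block. Given a non-singleton $B \in \mathcal{P}_D$, write $\tilde{B} = B \cap \{\text{boxed}\}$. By construction of $\mathcal{P}_D$, the block $B$ arises from the hue block $\tilde{B}$ together with the unboxed matching-hue underlined elements of hue $C_{\tilde B}$ in $B\setminus \tilde B$. The automatic underlinings inside $\tilde{B}$ contribute $|\tilde{B}|-1$ and the forced unboxed underlinings contribute $|B|-|\tilde{B}|$, summing to $|B|-1$. Singleton blocks contribute $0$: a boxed singleton has no automatic underlinings, and an unboxed singleton of $\mathcal{P}_D$ cannot be underlined with matching hue without being absorbed into a larger block.

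Next I will fix a monochromatic $\mathcal{P}$ with $k_B - |B| + 1 \ge 0$ everywhere and count decorations $D$ with $\mathcal{P}_D = \mathcal{P}$. The hue partition of $D$ is forced to be $\{\tilde B : B \in \mathcal{P},\, \tilde B \ne \emptyset\}$, and inside each non-singleton $B \in \mathcal{P}$ every unboxed element must receive hue $C_{\tilde B}$ and be underlined. The only remaining data is the hue of each \emph{free} element, by which I mean the smallest element of each hue block (when not crossed out) together with the non-crossed-out unboxed singletons of $\mathcal{P}$; the underlining status of every free element is forced (a boxed free element receives no additional underlining by the rules, while an unboxed free element would break $\mathcal{P}_D = \mathcal{P}$ if it were underlined with matching hue). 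Since hue $C_{\tilde B}$ must appear on exactly $k_B - |B| + 1$ free elements, the fiber is counted by the multinomial $\binom{\ell(\mathcal{P})-3}{k_{B_1}-|B_1|+1, k_{B_2}-|B_2|+1,\ldots}$.

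The main bookkeeping point will be verifying that there are exactly $\ell(\mathcal{P}) - 3$ free elements. By Remark \ref{rmk:coefficients-nonzero}, any color with $\ell_j = 0$ has all of its blocks as singletons in every nonvanishing $\mathcal{P}$; in particular, the $3-t$ unboxed crossed-out elements are all singletons, where $t$ denotes the number of crossed-out elements that are boxed. Writing $r$ for the number of hue blocks and $s_{\text{unbox}}$ for the number of non-crossed-out unboxed singletons of $\mathcal{P}$, there are $r-t$ free elements of the first kind and $s_{\text{unbox}}$ of the second, while $\ell(\mathcal{P}) = r + s_{\text{unbox}} + (3-t)$; thus the total is $\ell(\mathcal{P})-3$, matching $\sum_B (k_B - |B| + 1)$. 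Combining the two steps with Lemma \ref{lem:multinomial-form} completes the proof.
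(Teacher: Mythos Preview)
Your proof is correct and follows essentially the same approach as the paper: both partition the decorations by the extended hue partition $\mathcal{P}_D$, observe that $u(D)$ depends only on $\mathcal{P}_D$, and then count the fiber over each $\mathcal{P}$ as the multinomial coefficient. Your writeup is more explicit about the bookkeeping (the local sign computation block-by-block, and the careful count of free elements split into boxed minima and unboxed singletons), whereas the paper compresses this to the observation that there is one hue-free number per block of $\mathcal{P}$ apart from the three crossed-out ones.
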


\begin{proof}
For each set partition $\mathcal{P}$ containing only monochromatic blocks $B$ for which $k_B-|B|+1\ge 0$ (which happens in any partition associated to a decoration by definition), we break the sum up according to the decorations $D$ for which $\mathcal{P}_D=\mathcal{P}$.  Write $\mathcal{D}(\mathcal{P})$ for the set of all such decorations.  Then the summation becomes \begin{align*}
    \sum_{D\in \mathcal{D}(\bf k)} (-1)^{u(D)}&=\sum_{\mathcal{P}}\sum_{D\in \mathcal{D}(\mathcal{P})} (-1)^{u(D)} \\
    &=\sum_{\mathcal{P}}\sum_{D\in\mathcal{D}(\mathcal{P})}(-1)^{\sum_{B\in \mathcal{P}} |B|-1} \\
    &=\sum_{\mathcal{P}}(-1)^{\sum_{B\in \mathcal{P}}|B|-1} |\mathcal{D}(\mathcal{P})|
\end{align*}
where the second equality follows from the definition of the underlined numbers and of $\mathcal{P}_D$, and the third equality is because the sign we are summing only depends on $\mathcal{P}$, not $D$.

Finally, we compute $|\mathcal{D}(\mathcal{P})|$.  The set partition $\mathcal{P}$ determines all the underlined numbers and their hues; there is therefore one number per block of $\mathcal{P}$ remaining with no hue, apart from the three crossed out numbers.  Thus there are $\ell(\mathcal{P})-3$ numbers remaining to assign a hue, and we have to assign $k_B-(|B|-1)$ more of each hue $C_B$. There are therefore $|\mathcal{D}(\mathcal{P})|=\binom{\ell(\mathcal{P})-3}{k_{B_1}-|B_1|+1,k_{B_2}-|B_2|+1,\ldots}$ ways to assign the remaining hues, so the summation matches the right hand side of Lemma \ref{lem:multinomial-form}.
\end{proof}

We now cancel terms in this alternating sum in two stages, with two sign reversing involutions. 

\subsection{First sign reversing involution \texorpdfstring{$\underline{\varphi}$}{}}

\begin{defn}
    For a fixed sequence of exponents $\mathbf{k}$, define an involution $\underline{\varphi}:\mathcal{D}(\mathbf{k}) \to \mathcal{D}(\mathbf{k})$ as follows.
    Let $m$ be the smallest unboxed number in $\{1,2,\dots,n\}$ whose hue matches its color. Then $\underline{\varphi}(D)$ is the decoration given by toggling whether $m$ is underlined or not in $D$. If no such $m$ exists, then $\underline{\varphi}(D) = D$.
\end{defn}

\begin{example}
    In the decoration of Example \ref{ex:decoration}, $\underline{\varphi}$ underlines the 5 and produces the output:
\begin{center}
    \begin{tabular}{cccccccc}
    $R$ & & $\boxed{\cancel{\phantom{.}{\color{red}1}\phantom{.}}}$ & $\boxed{\phantom{.}{\color{red}2}_{A}\phantom{.}}$ & $\boxed{{\phantom{.}{\color{red}\underline{3}}_{R_{1,3}}}\phantom{.}}$ & ${\color{red}4}_{V}$ & ${\color{red}\underline{5}}_{R_{1,3}}$ & ${\color{red}\underline{6}}_{R_{2}}$ \\
    
    $A$ & & $\boxed{\cancel{\phantom{.}\color{blue}7\phantom{.}}}$ & ${\color{blue}8}_{R_{1,3}}$ & ${\color{blue}\underline{9}}_A$ & ${\color{blue}10}_{R_{1,3}}$ & ${\color{blue}\underline{11}}_A$ & \\
    
 $G$ & &    $\cancel{{\color{mygreen}12}}$ & ${\color{mygreen}13}_{R_2}$ & ${\color{mygreen}14}_A$ & ${\color{mygreen}15}_{R_{1,3}}$ & & \\
    
    $Y$ & & ${\color{y1}16}_{R_{2}}$ & ${\color{y1}17}_{R_{1,3}}$ & & & & \\
    
 $V$ & &  $\boxed{{\color{y2}18}_{R_{1,3}}}$ & $\boxed{{\color{y2}\underline{19}}_{V}}$ & & & &
\end{tabular}
\end{center}
\end{example}

By definition, the fixed points of $\underline{\varphi}$ are precisely those in which every unboxed number has a hue that does not match its color.

\begin{defn}
    We say a decoration is \textbf{mismatched} if every unboxed number has a hue that does not match its color,
    and we write $\mathcal{MD}(\mathbf{k})$ for the set of all mismatched decorations for exponent sequence $\mathbf{k}$.
\end{defn}

Note that in a mismatched decoration, the unboxed numbers are never underlined, but the decoration may still have a negative sign because of underlinings among the boxed numbers. Note also that the extended hue partition for a mismatched decoration is equivalent to the hue partition (treating every unboxed number as an additional singleton block).

\begin{lemma}\label{lem:alternating-2}
       The $\psi$ class product \eqref{eqn:psi-product} is equal to the reduced summation $\sum_{D\in \mathcal{MD}(\mathbf{k})} (-1)^{u(D)}$.
\end{lemma}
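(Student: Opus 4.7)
The plan is to combine Lemma~\ref{lem:alternating-1} with the standard sign-reversing involution framework~\eqref{eq:alt-sum-X} applied to $\underline{\varphi}$. Since Lemma~\ref{lem:alternating-1} already gives
\[
\int_{\Mbar_{0,[r_1,\ldots,r_m]}} \left(\prod_{i=1}^{\ell_1}\psi_i^{k_i^{(1)}}\right)\cdots \left(\prod_{i=r_1+\cdots+r_{m-1}+1}^{r_1+\cdots+r_{m-1}+\ell_m}\psi_i^{k_i^{(m)}}\right) = \sum_{D \in \mathcal{D}(\mathbf{k})} (-1)^{u(D)},
\]
it suffices to show that $\underline{\varphi}$ is a well-defined sign-reversing involution on $\mathcal{D}(\mathbf{k})$ whose fixed-point set equals $\mathcal{MD}(\mathbf{k})$.

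First I would check that $\underline{\varphi}$ is well-defined, i.e.\ that the output is still a valid decoration. The only datum changed by $\underline{\varphi}$ is whether the distinguished unboxed number $m$ (the smallest unboxed number whose hue matches its color) is underlined. Since the decoration rules in Definition~\ref{def:decoration} allow any subset of the unboxed numbers whose hue matches their color to be underlined, toggling the underline status of $m$ produces another valid decoration. The hue partition, hue assignments, and boxes/crossings are left untouched.

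Next I would verify the involution and sign-reversing properties. Applying $\underline{\varphi}$ twice: the value of $m$ is determined solely by the set of unboxed numbers with matching hue (which is unchanged), so the second application identifies the same $m$ and toggles its underline back. Hence $\underline{\varphi}^2 = \mathrm{id}$. Since toggling a single underline changes $u(D)$ by exactly one, we have $(-1)^{u(\underline{\varphi}(D))} = -(-1)^{u(D)}$ whenever $\underline{\varphi}(D) \neq D$, so $\underline{\varphi}$ is sign-reversing.

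Finally, the fixed points of $\underline{\varphi}$ are exactly the decorations for which no such $m$ exists, i.e.\ those decorations in which every unboxed number has a hue that does not match its color. By definition these are precisely the mismatched decorations $\mathcal{MD}(\mathbf{k})$. Applying \eqref{eq:alt-sum-X}, all non-fixed terms cancel in pairs and we obtain
\[
\sum_{D \in \mathcal{D}(\mathbf{k})} (-1)^{u(D)} = \sum_{D \in \mathcal{MD}(\mathbf{k})} (-1)^{u(D)},
\]
proving the lemma. There is no real obstacle here: the argument is a direct application of the SRI paradigm once the definitions of decoration, underlining, and hue-matching are in place. The substantive combinatorial work is deferred to the second, harder involution that will act on the boxed underlinings of $\mathcal{MD}(\mathbf{k})$.
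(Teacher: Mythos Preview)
Your proposal is correct and follows essentially the same approach as the paper's own proof: start from Lemma~\ref{lem:alternating-1}, observe that $\underline{\varphi}$ is a sign-reversing involution (changing $u(D)$ by $\pm 1$), and identify its fixed points with $\mathcal{MD}(\mathbf{k})$. Your write-up is simply more explicit about well-definedness and the involution property than the paper's terse version.
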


\begin{proof}
    Starting from Lemma \ref{lem:alternating-1}, the product is equal to the sum $\sum_{D\in \mathcal{D}(\mathbf{k})} (-1)^{u(D)}$.  The involution $\underline{\varphi}$ is sign reversing because it changes the count of underlined numbers by $\pm 1$. It pairs up cancelling pairs of $+1$s and $-1$s and leaves only the summation over the fixed points of $\underline{\varphi}$, namely the mismatched decorations, completing the proof.
\end{proof}

\subsection{Second sign reversing involution \texorpdfstring{$\phihue$}{}}

We now define a sign reversing involution on mismatched decorations.  The idea is to toggle underlined letters among the boxed letters, which will involve splitting and merging hue blocks. 
In this subsection, we assume all decorations are mismatched.

\begin{notation}
    Throughout this section, we will often write a hue $C_B$ as $B$ when it is clear what color it is.
\end{notation}

\begin{defn}
    In a decoration $D$, we say that a singleton hue $\{t\}$ \textbf{wants to merge with} a hue $B_t$ of the same color if $r=\min(B_t)>t$ and $\{t\}$ is the hue of the number $r$. 
\end{defn}

 We now develop definitions that give conditions for when $\{t\}$ can merge with $B_t$.  To do so, we first define two sequences similar to cycle decompositions of permutations.  We use these orderings to define $\phihue$ in a similar manner to the crucial step of converting from a linear sequence to a product of cycles in Joyal's proof of Cayley's formula for tree enumeration \cite{Joyal}.
\begin{defn}
    Let $D$ be a mismatched decoration and $C$ a color. Let $I$ denote the set of boxed numbers of $C$ excluding any that is crossed out, underlined, or whose hue does not match its color. The {\bf number chain decomposition} in color $C$ in $D$ is the following permutation of $I$.

    Start by writing an open parenthesis followed by $i_1 := \min(I)$. For $k\geq 1$, if $B$ is the hue of $i_k$ with $b=\min(B)$, define
    \begin{equation}\label{eqn:number_chain_defn}
    i_{k+1} :=
    \begin{cases}
    b, & \text{if } b \in I \setminus \{i_1, \ldots, i_k\}, \\
    \min(I \setminus \{i_1, \ldots, i_k\}), & \text{otherwise.}
    \end{cases}
    \end{equation}
    In the first case, write down $i_{k+1}$ and continue extending the chain; in the second case, write a close parenthesis, then an open parenthesis, and write down $i_{k+1}$. Continue until all of $I$ has been listed, and end with a close parenthesis.
    
    The {\bf hue chain decomposition} in color $C$ is the sequence $B_1, \ldots, B_\ell$, where $B_k$ is the hue of $i_k$, with the same parentheses as in the number chain decomposition.  It is thus a permutation of the multiset of hues in $I$.  
\end{defn}

We note that this definition makes sense for arbitrary assignments of sets of numbers within a color to the boxed numbers of that color, not necessarily satisfying the axioms of mismatched decorations. In the proofs below, we will sometimes refer to number and hue chain decompositions of such assignments.

\begin{example}\label{ex:hue-chain}
    Consider the following mismatched decoration for the product $$\int_{\Moduli_{0,[9,4,4]}}\psi_{1}\psi_2\psi_3\psi_4\psi_5\psi_6\psi_7^2\psi_8^3\cdot \psi_{10}^3$$
    and hue partition given by $\{5, 6\}$ and all other hues singletons:
    \begin{center}
       \begin{tabular}{ccccccccccc}
    $R$ & & $\boxed{\cancel{\phantom{.}{\color{red}1}\phantom{.}}}$ & $\boxed{\phantom{.}{\color{red}2}_{R_{5,6}}\phantom{.}}$ & $\boxed{{\phantom{.}{\color{red}3}_{R_7}}\phantom{.}}$ & $\boxed{{\color{red}4}_{R_{8}}}$ & $\boxed{{\color{red}5}_{R_{1}}}$ & $\boxed{{\color{red}\underline{6}}_{R_{5,6}}}$ & $\boxed{{\color{red} 7}_{A_{10}}}$ & $\boxed{{\color{red} 8}_{R_{7}}}$ & ${\color{red} 9}_{A_{10}}$\\
    
    $A$ & & $\boxed{\cancel{\phantom{.}\color{blue}10\phantom{.}}}$  & ${\color{blue}11}_{R_8}$ &${\color{blue}12}_{R_3}$ &${\color{blue}13}_{R_8}$ & & \\
    
 $G$ & &    $\cancel{{\color{mygreen}14}}$  & ${\color{mygreen}15}_{R_2}$ & ${\color{mygreen}16}_{A_{10}}$ & ${\color{mygreen}17}_{R_4}$\\
\end{tabular}
\end{center}
The number chain decomposition for the red numbers is $$(2,5),(3),(4,8)$$
and the corresponding hue chain decomposition is 
$$(R_{5,6},R_1),(R_7),(R_8,R_7),$$
which we may also rewrite in terms of blocks as
$$(\{5,6\},\{1\}),(\{7\}),(\{8\},\{7\}).$$
\end{example}

We record two observations about number and hue chain decompositions.

\begin{lemma}\label{lem:hue-facts} Consider a mismatched decoration.
\begin{enumerate}
    \item In the number chain decomposition in a color $C$, if $i$ is the start of a new chain, then every element of $I$ less than $i$ occurs earlier in the sequence.

    \item In the hue chain decomposition, all occurrences a hue $B$ of color $C$ after the first are ends of chains.
\end{enumerate}
\end{lemma}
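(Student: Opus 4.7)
The plan is to argue directly from the recursive rule in equation~(\ref{eqn:number_chain_defn}). Both parts are essentially immediate once one unwinds how the rule fires, so I would handle them in order.

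For part (1), the claim is almost tautological. By construction, a new chain opens at position $k{+}1$ precisely when the \emph{otherwise} branch of (\ref{eqn:number_chain_defn}) is invoked at step $k$, and in that branch $i_{k+1}$ is defined to be $\min(I \setminus \{i_1, \ldots, i_k\})$. Any element of $I$ strictly less than $i_{k+1}$ is therefore already listed among $i_1, \ldots, i_k$, giving the claim.

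For part (2), the key is to track $b := \min(B)$ across the chain. Suppose $B$ is the hue of $i_k$ and appears again as the hue of $i_j$ for some $j > k$; I want to show that position $j$ ends its chain, i.e., that $b \notin I \setminus \{i_1, \ldots, i_j\}$. At step $k$, equation~(\ref{eqn:number_chain_defn}) branches on exactly this condition with $k$ in place of $j$: either $b \in I \setminus \{i_1, \ldots, i_k\}$, in which case $i_{k+1} = b$ and so $b$ is listed by position $k+1 \leq j$; or the test fails, in which case $b \notin I$ or $b \in \{i_1, \ldots, i_k\}$ already. In every case $b \notin I \setminus \{i_1, \ldots, i_j\}$, so the otherwise branch fires at step $j$ and the chain closes immediately after $i_j$.

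The only subtlety I would flag is that the comparison element at step $j$ is $\min(B_{i_j}) = \min(B) = b$, i.e., the same element $b$ analyzed at step $k$; this is precisely why the information from step $k$ propagates to step $j$. I do not anticipate a real obstacle here---the lemma is a direct consequence of the recursive definition---so the task is really just to lay out the case split cleanly.
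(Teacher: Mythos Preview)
Your argument is correct and is essentially the same as the paper's own proof, which likewise observes that part (1) is immediate from the second case of \eqref{eqn:number_chain_defn} and that part (2) follows because after the first occurrence of $B$, its minimum $b$ has either just been listed or was already absent from the remaining set. You have simply written out the case analysis in more detail than the paper does.
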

\begin{proof}
    Statement (1) corresponds to the second case in Equation \eqref{eqn:number_chain_defn}.
    For statement (2), the first time $B$ occurs in the hue chain decomposition, its minimum $b = \min(B)$ is next in the number chain, unless it has already been listed. Either way, any further occurrences of $B$ lead into the second case of Equation \eqref{eqn:number_chain_defn}.
\end{proof}

We now give conditions for merging hues.

\begin{defn}
    In a mismatched decoration, the \textbf{permission list} of a set of numbers $S$ whose hues all have color $C$ is the ordering of the elements of $S$ as follows: first list all elements of $S$ that are in color $C$ in number chain order, and then list the remaining elements of $S$ in ascending order.  We call this ordering of $S$ the \textbf{permission order}.
\end{defn}

\begin{example}
  In Example \ref{ex:hue-chain}, the permission list of the set of all numbers with hues $R_1$, $R_7$, or $R_8$ is $5,3,4,8,11,13$.    
\end{example}

\begin{defn}\label{def:permission}
    Suppose $\{t\}$ wants to merge with block $B_t$, with $r=\min(B_t)$.  Then $\{t\}$ \textbf{has permission} to merge with $B_t$ if the following conditions are satisfied:
    \begin{enumerate}
    \item[(i)] In the number chain decomposition, $r$ is the first number of hue $\{t\}$.
    
        \item[(ii)] If $r$ is not the start of its number chain, let $q$ be the number preceding it.  Consider the set $S$ of all non-underlined numbers  with hue $\{t\}$ or $B_t$ besides $q$ (if it exists) and besides $r$.  In the permission list of $S$, those with hue $B_t$ all precede those with hue $\{t\}$.
    \end{enumerate}
\end{defn}

\begin{defn}
    If $q$ exists in condition (ii) above, we say $\{t\},B_t$ are in \textbf{Situation Q}, and otherwise we say they are in \textbf{Situation \nQ}.
\end{defn}

\begin{remark}
    In Situation Q, the hue of $q$ is $B_t$ by the definition of the number chain.
\end{remark}

Condition (i) immediately implies the following lemma.

\begin{lemma}\label{lem:clinging-unique}
    If $\{t\}$ wants to merge with multiple blocks, it can only have permission to merge with at most one of them.
\end{lemma}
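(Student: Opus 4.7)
The plan is to derive the lemma almost directly from Condition (i) of Definition \ref{def:permission}. Suppose for contradiction that $\{t\}$ wants to merge with two distinct blocks $B$ and $B'$ of the same color $C$. By the definition of ``wants to merge,'' both $r := \min(B)$ and $r' := \min(B')$ satisfy $r, r' > t$, and $\{t\}$ is the hue of each of them. Since $B \ne B'$, we have $r \ne r'$.

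Now apply Condition (i) of the permission definition. For $\{t\}$ to have permission to merge with $B$, the number chain decomposition in color $C$ must have $r$ as the \emph{first} number of hue $\{t\}$; for $\{t\}$ to have permission to merge with $B'$, it must have $r'$ as the first number of hue $\{t\}$. These two statements are incompatible since $r \ne r'$, so $\{t\}$ can have permission to merge with at most one of $B, B'$.

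I expect no real obstacle here: the lemma is essentially a restatement of the uniqueness inherent in Condition (i), and the only thing to check is that the definitions pin down $r$ and $r'$ as distinct occurrences in the number chain. The other conditions (Condition (ii), Situation Q versus Situation \nQ) play no role in the argument, and indeed the proof does not require inspecting any of the finer structure of the decoration beyond the fact that $r$ and $r'$ are distinct numbers with the same hue $\{t\}$.
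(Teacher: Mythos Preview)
Your proof is correct and follows exactly the paper's approach: the paper simply states that ``Condition (i) immediately implies the following lemma,'' and your argument spells out precisely why---the first number of hue $\{t\}$ in the number chain decomposition is unique, so at most one of the distinct candidates $r, r'$ can satisfy condition (i).
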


\begin{example}
    In Example \ref{ex:hue-chain}, hue $\{1\}$ wants to merge with $\{5,6\}$ and has $q=2$, $r=5$; no other numbers with hue $\{1\}$ or $\{5,6\}$ occur, so $\{1\}$ has permission to merge with $\{5,6\}$. In addition, $\{7\}$ wants to merge with $\{8\}$; however, $8$ is not the first number with hue $\{7\}$ in the number chain decomposition, so $\{7\}$ does not have permission to merge with $\{8\}$. 
\end{example}

\begin{example}\label{ex:mod-hue-chain}
    If we modify Example \ref{ex:hue-chain} to the following:    \begin{center}
       \begin{tabular}{ccccccccccc}
    $R$ & & $\boxed{\cancel{\phantom{.}{\color{red}1}\phantom{.}}}$ & $\boxed{\phantom{.}{\color{red}2}_{R_{5,6}}\phantom{.}}$ & $\boxed{{\phantom{.}{\color{red}3}_{R_8}}\phantom{.}}$ & $\boxed{{\color{red}4}_{R_{8}}}$ & $\boxed{{\color{red}5}_{R_{1}}}$ & $\boxed{{\color{red}\underline{6}}_{R_{5,6}}}$ & $\boxed{{\color{red} 7}_{A_{10}}}$ & $\boxed{{\color{red} 8}_{R_{7}}}$ & ${\color{red} 9}_{A_{10}}$\\
    
    $A$ & & $\boxed{\cancel{\phantom{.}\color{blue}10\phantom{.}}}$  & ${\color{blue}11}_{R_8}$ &${\color{blue}12}_{R_3}$ &${\color{blue}13}_{R_7}$ & & \\
    
 $G$ & &    $\cancel{{\color{mygreen}14}}$  & ${\color{mygreen}15}_{R_2}$ & ${\color{mygreen}16}_{A_{10}}$ & ${\color{mygreen}17}_{R_4}$ \\
\end{tabular}
\end{center}
then the number chain decomposition becomes:
$$(2,5),(3,8),(4),$$
with hue chain decomposition $$(\{5,6\},\{1\}),(\{8\},\{7\}),(\{8\}).$$
The hue $\{7\}$ still wants to merge with $\{8\}$, and now $r=8$ is the first number in number chain decomposition order with hue $\{7\}$.  Furthermore, $q=3$ for the candidate hue $\{7\}$, and the remaining numbers with hue $\{7\}$ or $\{8\}$, in permission order, are $4,11,13$ with respective hues $\{8\},\{8\},\{7\}$. Thus $\{7\}$ has permission to merge with $\{8\}$.

The hue $\{7\}$ is not the minimal $\{t\}$; that is still $\{1\}$, which has permission to merge with $\{5,6\}$.  For $t=1$, we have $q=2$. 
\end{example}

\begin{defn}[The involution $\phihue$]\label{def:involution-2}
 Given $D\in \mathcal{MD}(\mathbf{k})$, let $B$ (if it exists) be the non-singleton hue with the smallest minimal element $m=\min(B)$.  Let $\{t\}$ (if it exists) be the smallest singleton hue with permission to merge with some (necessarily unique) block $B_t$.

    We define the map $\varphi^{\mathrm{hue}} :\mathcal{MD}(\mathbf{k})\to \mathcal{MD}(\mathbf{k})$ by the following three cases:
\begin{itemize}
    \item  \textbf{Split:} If $m<t$ or $t$ does not exist, split $B$ into two blocks, $\{m\}$ and $B'=B\setminus \{m\}$, and let $r=\min(B')$.  
   If there is a number chain starting with some $i<r$ containing a number of hue $B$, let $q$ be the first number of hue $B$ in number chain order.  Redecorate as follows:
    \begin{enumerate}
        \item Relabel the hue $B$ on $r=\min(B')$ as $\{m\}$ and remove its underline.
        \item Relabel the hues $B$ as $B'$ on the remaining numbers in $B'$.
        \item If $q$ exists, relabel its hue from $B$ to $B'$. 
        
        \item Let $S$ be the set of remaining numbers with hue $B$ in the decoration.  In the permission list order of $S$, relabel the hues first with $B'$ until there are a total of $k_{B'}$ numbers with hue $B'$ in the decoration, and then relabel the rest $\{m\}$.
    \end{enumerate}
     \item \textbf{Merge:} If $t<m$ or $m$ does not exist, merge the blocks $\{t\}$ and $B_t$ into a block $B=\{t\}\cup B_t$, relabel all hues $\{t\}$ and $B_t$ as $B$, and underline the minimum element of $B_t$. 
     \item \textbf{Do nothing:} If neither $t$ nor $m$ exists, do nothing.
\end{itemize}
We define the output to be $\varphi^{\mathrm{hue}}(D)$.
\end{defn}

\begin{remark}
    Python code implementing mismatched decorations and the hue involution is available at \url{https://github.com/jakelev/psi_decorations}.  As a sanity check, we have used this code to verify that $\phihue$ indeed acts as a sign-reversing involution on the mismatched decorations for every product on multicolored spaces with $\leq 11$ marked points (approx. 500 million mismatched decorations).
\end{remark}

\begin{remark}
    Given a decoration $D$, if the algorithm for computing $\phihue(D)$ follows the split procedure, we say $D$ is in the \textbf{split case}.  If it follows the merge procedure, we say $D$ is in the \textbf{merge case}.  Otherwise, we say $D$ is a \textbf{fixed point}.
\end{remark}

\begin{defn}
We say that a decoration in the split case is in \textbf{Situation Q} if $q$ exists as in Definition \ref{def:involution-2}, and in \textbf{Situation \nQ} if it does not.  
\end{defn}

We will see that the property of being in Situation Q or not is preserved by $\phihue$.  

\begin{example}
We examine the decoration in Example \ref{ex:mod-hue-chain}, which is a decoration for the $\psi$ class product $$\int_{\Moduli_{0,[9,4,4]}}\psi_{1}\psi_2\psi_3\psi_4\psi_5\psi_6\psi_7^2\psi_8^3\cdot \psi_{10}^3,$$
starting with 
\begin{center}
       \begin{tabular}{ccccccccccc}
    $R$ & & $\boxed{\cancel{\phantom{.}{\color{red}1}\phantom{.}}}$ & $\boxed{\phantom{.}{\color{red}2}_{R_{5,6}}\phantom{.}}$ & $\boxed{{\phantom{.}{\color{red}3}_{R_8}}\phantom{.}}$ & $\boxed{{\color{red}4}_{R_{8}}}$ & $\boxed{{\color{red}5}_{R_{1}}}$ & $\boxed{{\color{red}\underline{6}}_{R_{5,6}}}$ & $\boxed{{\color{red} 7}_{A_{10}}}$ & $\boxed{{\color{red} 8}_{R_{7}}}$ & ${\color{red} 9}_{A_{10}}$\\
    
    $A$ & & $\boxed{\cancel{\phantom{.}\color{blue}10\phantom{.}}}$  & ${\color{blue}11}_{R_8}$ &${\color{blue}12}_{R_3}$ &${\color{blue}13}_{R_7}$ & & \\
    
 $G$ & &    $\cancel{{\color{mygreen}14}}$  & ${\color{mygreen}15}_{R_2}$ & ${\color{mygreen}16}_{A_{10}}$& ${\color{mygreen}17}_{R_4}$\\
\end{tabular}
\end{center}
we have $m=5$ and $t=1$, so we are in the merge case, and we are in Situation Q with $q=2$. The output of $\phihue$ is

\begin{center}
       \begin{tabular}{ccccccccccc}
    $R$ & & $\boxed{\cancel{\phantom{.}{\color{red}1}\phantom{.}}}$ & $\boxed{\phantom{.}{\color{red}2}_{R_{1,5,6}}\phantom{.}}$ & $\boxed{{\phantom{.}{\color{red}3}_{R_8}}\phantom{.}}$ & $\boxed{{\color{red}4}_{R_{8}}}$ & $\boxed{{\color{red}\underline{5}}_{R_{1,5,6}}}$ & $\boxed{{\color{red}\underline{6}}_{R_{1,5,6}}}$ & $\boxed{{\color{red} 7}_{A_{10}}}$ & $\boxed{{\color{red} 8}_{R_{7}}}$ & ${\color{red} 9}_{A_{10}}$\\
    
    $A$ & & $\boxed{\cancel{\phantom{.}\color{blue}10\phantom{.}}}$  & ${\color{blue}11}_{R_8}$ &${\color{blue}12}_{R_3}$ &${\color{blue}13}_{R_7}$ & & \\
    
 $G$ & &    $\cancel{{\color{mygreen}14}}$  & ${\color{mygreen}15}_{R_2}$ & ${\color{mygreen}16}_{A_{10}}$ & ${\color{mygreen}17}_{R_4}$ \\
\end{tabular}
\end{center}
This new decoration is now in the split case with $m=1$.  The new number chain decomposition of the red row is $$(2),(3,8),(4),$$ which differs from the previous number chain decomposition by simply deleting the $5$.  It is furthermore in Situation Q because $r=5$ and $i=2<5$ starts a chain with a number $q$ of hue $\{1,5,6\}$, namely $q=2$.

Another application of $\phihue$ splits the block $\{1,5,6\}$ into two blocks $\{1\}$ and $B'=\{5,6\}$, labels $r=5$ as $\{1\}$ and removes its underline (Step 1), relabels $6$ as $\{5,6\}$ and keeps its underline (Step 2), and relabels $q=2$ as $\{5,6\}$ by Step 3. Step 4 is vacuous in this case, and we see that applying $\phihue$ twice returns us to the original decoration. 
\end{example}

\begin{example}
    We now give an example of the involution in Situation NQ.  Let $D$ be the decoration:
    
     \begin{center}
       \begin{tabular}{ccccccccccc}
    $R$ & & $\cancel{\phantom{.}{\color{red}1}\phantom{.}}$ & ${\color{red}2}_{\{7,8\}}$ & &  & &  & &  & \\
    
    $A$ & & $\boxed{\cancel{\phantom{.}\color{blue}3\phantom{.}}}$  & $\boxed{{\color{blue}4}_{\{6\}}}$ & $\boxed{{\color{blue}5}_{\{9\}}}$ & $\boxed{{\color{blue}6}_{\{9\}}}$ & $\boxed{{\color{blue}7}_{\{6\}}}$ & $\boxed{{\color{blue}\underline{8}}_{\{7,8\}}}$ & $\boxed{{\color{blue}9}_{\{5\}}}$ & $\boxed{{\color{blue}10}_{\{7,8\}}}$ \\
    
 $G$ & &    $\cancel{{\color{mygreen}11}}$  & ${\color{mygreen}12}_{\{5\}}$ & ${\color{mygreen}13}_{\{3\}}$ & ${\color{mygreen}14}_{\{10\}}$ & ${\color{mygreen}15}_{\{4\}}$ \\

 $V$ & & ${\color{violet} 16}_{\{7,8\}}$ &  ${\color{violet} 17}_{\{10\}}$ & ${\color{violet} 18}_{\{10\}}$ & ${\color{violet} 19}_{\{9\}}$ & ${\color{violet} 20}_{\{7,8\}}$
\end{tabular}
\end{center} 
    This is an example of a mismatched decoration for the product $$\int_{\Mbar_{0,[2,8,5,5]}} \psi_3 \psi_4\psi_5^2\psi_6^2\psi_7^2\psi_8^3\psi_9^3\psi_{10}^3.$$  Notice that the only $\psi$ classes are on Azure points (and the exponents are increasing accordingly), and so rather than writing hues like $A_{7,8}$ we simply write the hue as its block $\{7,8\}$ for the subscripts above.  
    
    Above, the hues that want to merge with another block are $\{5\}$, which wants to merge with $\{9\}$, and $\{6\}$, which wants to merge with $\{7\}$.  The number chain and hue chain decompositions of the non-underlined, non-crossed out azure boxed letters are:
    $$(\,\,4\,\,,\,\,6\,\,,\,\,9\,\,,\,\,5\,\,),(\,\,7\,\,),(\,\,10\,\,)$$
    $$(\{6\},\{9\},\{5\},\{9\}),(\{6\}),(\{7,8\})$$
    Since the number $7$ is not the first number with hue $\{6\}$ in number chain order, $\{6\}$ does not have permission to merge with $\{7,8\}$ by condition (i) for permission.  On the other hand, while $\{5\}$ and $\{9\}$ satisfy condition (i) for permission to merge, they do not satisfy condition (ii), because the permission list order for the remaining numbers in the entire decoration gives the ordering of hues $\{9\},\{5\},\{9\}$ in that order, since number 12 has hue $\{5\}$ and number 19 has hue $\{9\}$.  Therefore $\{5\}$ does not have permission to merge with $\{9\}$ either.  
    
    So $D$ is in the split case with its only nonsingleton block, $\{7,8\}$.  To split, for Step 1 we remove the underline on $8$ and rename its hue to $\{7\}$; in the chain decompositions, this inserts a number chain $(8)$, with hue $(\{7\})$, after the chain $(7)$, so we are indeed in Situation NQ.  Step 2 is vacuous since $\{8\}$ is a singleton hue now, and Step 3 does not occur since we are in Situation NQ.  For Step 4, we relabel the hue of $10$ as $\{8\}$ and then continue to relabel the hues of $2$, $16$, $20$ so that there are three numbers with hue $\{8\}$ and two with hue $\{7\}$ in the decoration.  The result is as follows:  

         \begin{center}
       \begin{tabular}{ccccccccccc}
    $R$ & & $\cancel{\phantom{.}{\color{red}1}\phantom{.}}$ & ${\color{red}2}_{\{8\}}$ & &  & &  & &  & \\
    
    $A$ & & $\boxed{\cancel{\phantom{.}\color{blue}3\phantom{.}}}$  & $\boxed{{\color{blue}4}_{\{6\}}}$ & $\boxed{{\color{blue}5}_{\{9\}}}$ & $\boxed{{\color{blue}6}_{\{9\}}}$ & $\boxed{{\color{blue}7}_{\{6\}}}$ & $\boxed{{\color{blue}{8}}_{\{7\}}}$ & $\boxed{{\color{blue}9}_{\{5\}}}$ & $\boxed{{\color{blue}10}_{\{8\}}}$ \\
    
 $G$ & &    $\cancel{{\color{mygreen}11}}$  & ${\color{mygreen}12}_{\{5\}}$ & ${\color{mygreen}13}_{\{3\}}$ & ${\color{mygreen}14}_{\{10\}}$ & ${\color{mygreen}15}_{\{4\}}$ \\

 $V$ & & ${\color{violet} 16}_{\{8\}}$ &  ${\color{violet} 17}_{\{10\}}$ & ${\color{violet} 18}_{\{10\}}$ & ${\color{violet} 19}_{\{9\}}$ & ${\color{violet} 20}_{\{7\}}$
\end{tabular}
\end{center} 
where the new number and hue chain decompositions are: 
    $$(\,\,4\,\,,\,\,6\,\,,\,\,9\,\,,\,\,5\,\,),(\,\,7\,\,),(\,\,8\,\,),(\,\,10\,\,)$$
    $$(\{6\},\{9\},\{5\},\{9\}),(\{6\}),(\{7\}),(\{8\})$$
    Note that the same reasoning as above shows that, in this new output, $\{6\}$ does not have permission to merge with $\{7\}$ and $\{5\}$ does not have permission to merge with $\{9\}$.  But $\{7\}$ now has permission to merge with $\{8\}$, and is Situation NQ for permission, with the remaining hues after number $r=8$ in permission list order being $\{8\},\{8\},\{8\},\{7\}$.  Thus applying $\phihue$ again returns us to the original decoration.
\end{example}

\begin{lemma}
    The map $\phihue$ is well-defined.
\end{lemma}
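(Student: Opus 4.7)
The plan is to verify, case by case, that $\phihue(D)$ is unambiguously defined and produces a mismatched decoration with parameters $\mathbf{k}$. First, the inputs are unambiguous: the non-singleton hue $B$ with smallest minimum $m$ is unique; the smallest singleton hue $\{t\}$ with permission to merge is unique by Lemma \ref{lem:clinging-unique}; and because $B$ is non-singleton while $\{t\}$ is singleton, they are distinct blocks of the hue partition, so $m \neq t$ and exactly one of $m<t$ and $t<m$ holds when both exist. The three cases (split, merge, and do-nothing) are therefore mutually exclusive and jointly exhaustive.

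For the merge case, the new block $B = \{t\} \cup B_t$ has minimum $t$, and its top $|B|-1 = |B_t|$ elements are exactly $B_t$. The procedure relabels all numbers of hue $\{t\}$ or $B_t$ as $B$, yielding total multiplicity $k_t + k_{B_t} = k_B$; it then underlines $r = \min B_t$ so that all of $B_t$ is now underlined, matching the required top-$(|B|-1)$ pattern. Any unboxed number previously carrying hue $\{t\}$ or $B_t$ (both of color $C$) still carries a hue of color $C$ after relabeling, so the mismatched condition is preserved. The new hue partition --- obtained from the old by replacing the blocks $\{t\}$ and $B_t$ with their union $B$ --- is again a monochromatic set partition of the boxed numbers.

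For the split case, let $B' = B \setminus \{m\}$ with $r = \min B'$; the top $|B|-2 = |B'|-1$ elements of $B$ are precisely the top elements of $B'$, and Step 2 relabels these from hue $B$ to $B'$ while preserving their underlines, while Step 1 correctly removes the (now-unwanted) underline from $r$. After Steps 1--3 the counts are $k_B - |B| + 1 - \epsilon$ of hue $B$, $|B| - 2 + \epsilon$ of hue $B'$, and $1$ of hue $\{m\}$, where $\epsilon \in \{0, 1\}$ indicates whether $q$ exists. To reach the target counts $0$, $k_{B'}$, and $k_m$ in Step 4, we require $k_{B'} \geq |B| - 2 + \epsilon$ and $k_m \geq 1$: the second holds because $m$ is boxed, and the first follows from $k_B \geq k_m + (|B|-1)$, which holds because every $i \in B$ is boxed and so contributes at least $1$ to $k_B$. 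The mismatched condition is preserved as in the merge case, since all relabelings occur within color $C$, and the new hue partition replaces $B$ with the disjoint blocks $\{m\}$ and $B'$. The main obstacle is this bookkeeping in the split case --- verifying both that Step 4 always has sufficiently many remaining numbers with hue $B$ to relabel, and that the surviving underlines land exactly on the top $|B'|-1$ positions of $B'$; both reduce to the inequality $k_B \geq k_m + |B| - 1$ above together with the observation that removing $r$ from the top $|B|-1$ of $B$ leaves precisely the top $|B'|-1$ of $B'$.
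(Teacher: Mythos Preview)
Your proof is correct and follows essentially the same approach as the paper's own proof: both verify the split and merge cases separately, checking that the hue multiplicities come out right (in particular using $k_m\ge 1$ and $k_{B'}\ge |B'|$ since all elements of $B$ are boxed) and that the mismatched condition is preserved because all relabelings stay within the same color. Your version is a bit more explicit in two places --- you note that $m\ne t$ so the three cases are genuinely disjoint, and you track the intermediate hue counts after Steps~1--3 with the $\epsilon$ bookkeeping --- but the substance is the same.
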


\begin{proof}
   Given a mismatched decoration $D$, it either has an $m$ or $t$ as in the definition or does not; if it does not, $\phihue(D) = D \in \mathcal{MD}(\mathbf{k})$. If $D$ has an $m$ or $t$, we consider two cases.

   \textbf{Split case:} If $D$ is in the split case, then it has an $m$ with $m<t$ for $\{t\}$ with permission to merge. We first show that all the steps of the split algorithm are well-defined.  For Step 1, note that we must have $k_m\ge 1$, because $m$ is boxed as a number, so we have a hue $m$ available for this step. We also only need to underline the largest $|B'|-1$ elements of $B'$ (and ensure that they have hue $B'$), so we may remove the underline on $r$ as well. Step 2 is clearly well-defined since there are at least $k_{B'}\ge |B'|$ copies of hue $B'$ available for the new decoration. For Step 3, again since $k_{B'}\ge |B'|$, and only $|B'|-1$ elements are labeled $B'$ so far (in Step 2), there is an extra $B'$ hue available for $q$. Then, Step 4 is clearly well-defined, and by its definition, after Step 4 we have no more hues labeled $B$.
   
   Moreover, Step 4 guarantees that we have the correct number of each hue as given by $\mathbf{k}$ with the new set partition. The result $\phihue(D)$ is also still mismatched because $D$ was mismatched, and we are only changing the hues on some blocks to different hues of the same color. Thus the output of the split case is a well-defined mismatched decoration for the same sequence $\mathbf{k}$.

   \textbf{Merge case:} If $D$ is in the merge case, first note that $B_t$ is uniquely determined by Lemma \ref{lem:clinging-unique}.  Then we have $k_t$ numbers with hue $\{t\}$ and $k_{B_t}$ numbers with hue $B_t$; in the merge, we now have $k_t+k_{B_t}=k_B$ numbers of hue $B$, and since we underlined $\min(B_t)$, the correct elements are underlined.  It is also clearly still mismatched.  Thus we have a well defined output.
\end{proof}

To prove that $\phihue$ is an involution, we need two key lemmas on how the number and hue chain decompositions change after applying a split or a merge. 

\begin{lemma}[Chain decompositions after a split]\label{rmk:aside1}
    Suppose $\phihue(D)=D'$ via the split case, splitting a block $B$ of color $C$ into $\{m\}$ and $B'=B\setminus \{m\}$, where $r=\min(B')$. Then the chain decompositions of $C$ in $D'$ can be obtained from that of $D$ as follows:
    \begin{itemize}
    \item[(Q)] In Situation Q, the number $r$ is inserted immediately after $q$ in the number chain decomposition, and the associated hue $B$ in the hue chain decomposition becomes $B',\{m\}$.
    \item[(\nQ)] In Situation \nQ, a new singleton number chain $(r)$, with associated hue chain $(\{m\})$, is inserted directly after the last number chain that starts with a number less than $r$. 
    \end{itemize}
    Either way, $q,r$ become the first numbers of hues $B'$ or $\{m\}$ in number chain order, and then all later occurrences of $B$ become either $B'$ or $\{m\}$ according to permission list order.
\end{lemma}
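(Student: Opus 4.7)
The plan is to run the chain algorithm on $D$ and $D'$ in parallel and track the differences. First I would observe that $I_{D'} = I_D \cup \{r\}$: in the split, $r$ loses its underline (Step 1), becoming the new block-minimum of $B'$, while $b_3, \ldots, b_s$ remain underlined as the largest $|B'| - 1$ elements of $B'$, so no element leaves $I$. Next, the only hues that change from $D$ to $D'$ are those of elements with hue $B$ in $D$, which become $B'$ or $\{m\}$; in particular, $m$'s hue remains some $B^* \neq B$, since $m = \min(B)$ is not among the largest $|B|-1$ elements of $B$ (and therefore was not among the underlined elements bearing hue $B$).

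In Situation Q, both chain algorithms start identically at $\min(I_D) = \min(I_{D'})$, and as long as no element processed so far has had hue $B$ in $D$, the hues in $D$ and $D'$ agree, so the processing is identical. Upon reaching $q$ (the first $B$-hued element in $D$'s chain), Step 3 has replaced $q$'s hue with $B'$, so $D'$'s next element is $\min(B') = r$ (unused, since $r \notin I_D$), and then $\min(\{m\}) = m$, either continuing the chain or ending it if $m$ was already used, exactly as in $D$. Thus $r$ is inserted immediately after $q$. From this point on, every subsequent $B$-hued element of $D$ has $D'$-hue $B'$ or $\{m\}$, so its successor is $r$ or $m$, both already used in $D'$, hence such elements end chains in both $D$ and $D'$ at the same positions; starts of new chains then coincide since the remaining unprocessed sets agree. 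Thus the chain of $D'$ is obtained from that of $D$ by inserting $r$ after $q$.

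In Situation NQ, let $(c_1), \ldots, (c_\ell)$ be the chains of $D$ with $c_1 < \cdots < c_\ell$ and let $p$ be maximal with $c_p < r$. The first $p$ chains contain no $B$-hued element by the definition of NQ, so they are unchanged in $D'$; then the minimum of the remaining set in $D'$ is $r$ (smaller than $c_{p+1}$, if it exists), so a new chain starts there, and since $r$'s hue $\{m\}$ has successor $m$ (already used, as $m < c_{p+1}$ and $m \in I_D$), this is the singleton chain $(r)$. The subsequent chains of $D'$ then match chains $(c_{p+1}), \ldots, (c_\ell)$ of $D$ by the same argument as in Q. Finally, the assertion about permission list order translates Step 4 of the split algorithm directly: the color-$C$ elements of $S$ are exactly the later $B$-hued elements of $I_D$ (ordered by $D$'s number chain order, which by the above is also their order of appearance in $D'$'s chain after the insertion of $r$), and Step 4 assigns them hue $B'$ until the count $k_{B'}$ is reached, then $\{m\}$. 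The main obstacle will be making the parallel-chain invariant rigorous across the subcases (in Q, whether $m$ was used before $q$; in NQ, whether $c_{p+1}$ exists) and carefully handling the positions where chains end at a $B$-hued element whose successor is forced to be an already-used element.
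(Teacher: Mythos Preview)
Your proposal is correct and follows essentially the same approach as the paper's proof: both arguments track the chain algorithm through Steps~1--4 of the split, showing in Situation~Q that $r$ slots in immediately after $q$ (since $q$'s new hue $B'$ points to $r$, and $r$'s hue $\{m\}$ then points to $m$ just as $q$'s old hue $B$ did), and in Situation~NQ that $r$ forms its own chain in the appropriate position, with all later $B$-hued elements remaining chain-enders after relabeling. One small slip: in your NQ case you justify the termination of $(r)$ by ``$m \in I_D$'', but $m$ need not lie in $I_D$ (it could be crossed out or have mismatched hue); what you actually need is $m \notin I_{D'} \setminus \{\text{already listed}\}$, which holds either because $m \notin I_{D'}$ or because $m < r$ forces $m$ to have been listed already---the paper's proof has the same harmless imprecision.
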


\begin{remark}
       Visually, the change in hue chain decomposition Situation Q looks something like: 
\begin{align*}
(a,b,c,\,{\bf q},\,m,e),\,(f,g),\,(h) &\to (a,\,b,\,c,\,{\bf q},\,{\bf r},\,m,\,e),\,(f,g),\,(h) \\
(X, Y, Z, {\bf B}, T, S), (A, B),(B)  &\to 
(X, Y, Z, {\bf B', \{m\}}, T, S), (A, B'),(\{m\})
\end{align*}
    and in Situation {\nQ}  like:
    \begin{align*}
    (a,b,c,d,e),(f,g),(h)&\to (a,b,c,d,e),({\bf r}),(f,g),(h)\\
        (X,Y, Z, T,S),(A, B),(B) &\to (X, Y, Z, T,S),{\bf(\{m\})},(A, B'),(\{m\})
    \end{align*}
    with $a < r < f$.
\end{remark}

\begin{proof}
We consider the two cases separately:
    
  \textbf{Situation Q.}  In this case, a (first) number $q$ of hue $B$ appears in $D$ in a number chain starting with $i<r$.   Among the new non-underlined letters, the changes that occurred in Steps 1--3 were that the $r$ was de-underlined and assigned hue $\{m\}$, and the hue of $q$ is changed from $B$ to $B'$.    Since $\min(B')=r$, the next number in the number chain is then our new de-underlined number $r$, which has hue $\{m\}$, and in particular the entries before $q$ in the number chain decomposition are unchanged.  Since we also have $m=\min(B)$, the rest of the number chain continues as before.  
  
  Finally, for Step 4, the remaining $B$ hues are changed to $B'$ or $\{m\}$ in permission list order, and note that these $B$ hues all occur at the ends of their chains since the number $m$ already occurred earlier.  So we change these to $B'$ first, then $m$ if we run out of $B'$ hues, and these are still at the ends of their chains because $r=\min(B')$ and $m$ have already occurred in number chain order in the new output.  Thus none of the other numbers or hues in the chain are affected, and we are done.

    \textbf{Situation \nQ.} Since $q$ does not exist, when we de-underline $r$ and consider the number chain decomposition of the intermediate hue labeling after Step 1, $r$ starts its own number chain, with hue $\{m\}$, after all numbers less than $r$ have already been listed.  Since $m<r$, the number $m$ has already been listed in the number chain and so the chain terminates as $(r)$.

    Since $m$ and $r$ has already been listed, any later $B$ hues in the number chain occur at the ends of their chains, and when we change them to $B'$ and $\{m\}$ according to Step 4 they remain at the ends of their chains and the rest of the number chain decomposition is unchanged.  This completes the proof.
\end{proof}

The opposite changes occur when we merge.

\begin{lemma}[Chain decompositions after a merge]\label{rmk:aside2}
    Suppose $\phihue(D')=D$ via the merge case, merging $\{t\}$ with a block $B_t$ (with $r=\min(B_t)$) into a block $B=\{t\}\cup B_t$ in color $C$. Then the chain decompositions of $C$ in $D$ can be obtained from that of $D'$ as follows:
\begin{itemize}
    \item[(Q)] In Situation Q, we have $q,r$ consecutively in a number chain with hues $B_t,\{t\}$, and we delete $r$ (and its hue $\{t\}$) and replace the hue $B_t$ of $q$ by $B$.
    \item[(\nQ)] In Situation \nQ, the chain $(r)$ terminates immediately, with hue chain $(\{t\})$; delete it.
\end{itemize}
In both cases, rename all other instances of hues $B_t$ and $\{t\}$ to $B$.
\end{lemma}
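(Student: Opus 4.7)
The plan is to directly analyze how the number chain decomposition of color $C$ in $D'$ transforms under the merge operation producing $D$. Since the merge only relabels hues within color $C$ (replacing $\{t\}$ and $B_t$ with $B = \{t\} \cup B_t$) and removes $r$ from $I$ via underlining, the chain decompositions of all other colors are unaffected, so only color $C$ needs attention.

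First I would locate $r$ in $D'$'s number chain decomposition. By permission condition (i) of Definition \ref{def:permission}, $r$ is the first number of hue $\{t\}$. If $r$ is not the start of its chain, then the preceding entry $q$ satisfies $i_{k+1}=r$ via the first case of Equation \eqref{eqn:number_chain_defn}, forcing the hue of $q$ to have minimum $r$, hence to be $B_t$; moreover $q$ is then the \emph{first} number of hue $B_t$, since an earlier $B_t$-hued entry would already have forced $r$ next. This is Situation Q. Otherwise $r$ starts a new chain, giving Situation \nQ. In both situations, no entry strictly before $r$ has hue $\{t\}$ or $B_t$, so the initial segment of the decomposition before $r$ is unaffected by the merge's hue renaming.

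Next I would handle the transition at $r$. In Situation \nQ, because $r$ begins a fresh chain, every element of $I_{D'}$ smaller than $r$ is already listed; in particular, if $t \in I_{D'}$ then $t$ is already listed, so the chain-building rule applied after $r$ returns $\min(I_{D'} \setminus \{\mathrm{listed}\})$ and starts a new chain --- hence the chain containing $r$ is literally $(r)$, and deleting it (with its hue) matches $D$'s decomposition on the remaining entries. In Situation Q, after the merge the hue of $q$ becomes $B$ with $\min B = t$, and $r \notin I_D$; I would verify that the next entry after $q$ in $D$'s chain equals the next entry after $r$ in $D'$'s, by observing that the listed sets differ exactly by $\{r\}$ and $r \notin I_D$, so the rule at $q$ in $D$ (seeking $t$ or $\min$ of unlisted) produces the same output as the rule at $r$ in $D'$.

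The main obstacle is confirming that the \emph{remainder} of $D$'s chain decomposition agrees with $D'$'s after deletion of $r$ and renaming, since a later $B$-hued entry in $D$ has $\min B = t$ and could in principle extend a chain by promoting $t$ where $D'$ did not. To address this, I would apply Lemma \ref{lem:hue-facts}(2) to conclude that every occurrence of $B_t$ in $D'$ after $q$, and every occurrence of $\{t\}$ after $r$, sits at a chain end; combined with the Step 3 analysis, once we pass position $k+1$ the element $t$ is guaranteed to be listed in both $D$ and $D'$ (if $t \in I$ at all). Consequently, whenever a later $B$-hued entry appears in $D$, the chain rule outputs $\min$ of the remaining unlisted, which matches the behavior in $D'$ (where both $r$ and $t$ are already listed, so the rule at late $B_t$- or $\{t\}$-hued entries also returns $\min$). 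The chain boundaries therefore coincide, and the remaining entries and hues in $D$ agree with those of $D'$ modulo the prescribed deletion of $r$ and renaming $\{t\}, B_t \mapsto B$.
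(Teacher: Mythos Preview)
Your argument is correct and follows essentially the same route as the paper's proof: locate $r$ as the first $\{t\}$-hued entry via permission condition (i), handle the transition at $q,r$ (Situation Q) or the singleton chain $(r)$ (Situation \nQ), and then use Lemma~\ref{lem:hue-facts}(2) to see that all later $B_t$- or $\{t\}$-hued entries terminate their chains in both $D'$ and $D$. One small expository slip: you write that ``no entry strictly before $r$ has hue $\{t\}$ or $B_t$'', but in Situation Q the entry $q$ immediately before $r$ has hue $B_t$; the correct statement (which your next paragraph uses anyway) is that no entry strictly before $q$ has either hue.
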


\begin{proof}
 Since $\{t\}$ has permission to merge, the number $r=\min(B_t)$ is the first number with hue $\{t\}$ in the number chain decomposition by condition (i) of permission. This $r$ becomes underlined after merging, so we must remove $r$ and its hue from the number and hue chain decompositions. 

\textbf{Situation Q:} In this situation, $r$ is preceded by a number $q$ with hue $B_t$ in a chain, and all numbers preceding $q$ in the number chain do not have hue $B_t$ or $\{t\}$, so the portion of the number chain preceding $q$ is unchanged after merging.   Then, $q$'s hue is changed to $B$, which has minimum element $t$.  Thus if number $t$ followed $r$ in its number chain in $D'$, then $t$ will now follow $q$ in $D$, so we have simply deleted $r$ and its hue without disturbing the rest of the chains.  

Any later $B_t$ or $\{t\}$ in hue chain order must have ended a chain since the numbers $r$ and $t$ were already listed, so changing them to $B$ results in $B$ hues that remain at the ends of their chain by the same reasoning.  Thus the rest of the chain is unchanged. 
 
 \textbf{Situation \nQ:} In this case $(r)$ is a singleton chain, and we remove it because it becomes underlined.  Moreover, since $(r)$ starts its own chain, the number $t<r$ must have preceded it in number chain order.  Thus, again, any later $B_t$ or $\{t\}$ in hue chain order must have ended its chain, so changing them to $B$ results in $B$ hues that remain at the ends of their chain, and we're done.  
\end{proof}

As another step towards showing that $\phihue$ is an involution, we now prove two technical lemmas about compatiblity of the split procedure with the conditions for permission to merge.

\begin{lemma}\label{lem:undo-split}
    Suppose $\phihue(D)=D'$ via the split case with block $B$, and $m=\min B$.   Then in $D'$, the hue $\{m\}$ has permission to merge with $B'=B\setminus \{m\}$.  
    
    Moreover, if $\{m\}$ is the smallest singleton hue with permission to merge in $D'$, then $\phihue(D')=D$.
\end{lemma}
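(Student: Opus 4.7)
The plan is to verify that $\{m\}$ has permission to merge with $B'$ in $D'$ (the first claim), and then to show that under the hypothesis of the second claim, applying $\phihue$ to $D'$ lands in the merge case and exactly reverses the split.

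For the first claim, I would first note that $\{m\}$ wants to merge with $B'$ in $D'$: Step 1 of the split assigned hue $\{m\}$ to $r = \min(B') > m$. Permission condition (i)---that $r$ is the first number with hue $\{m\}$ in $D'$'s number chain---follows directly from Lemma \ref{rmk:aside1}, which describes $r$ as either the second entry of the chain containing $q$ (Situation Q) or the lone entry of a newly inserted singleton chain (Situation \nQ); in both cases every other occurrence of hue $\{m\}$, coming from Step 4 relabelings, appears strictly later in number chain order. For permission condition (ii), I would identify the set $S_{D'}$ in Definition \ref{def:permission} (non-underlined numbers in $D'$ with hue $\{m\}$ or $B'$, excluding $q$ and $r$) with the set of numbers relabeled in Step 4 of the split, and invoke the fact that Step 4 processes these in permission list order, first as $B'$ until the total reaches $k_{B'}$, then as $\{m\}$. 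The main technical obstacle lies here: Step 4's ordering is computed using the \emph{intermediate} decoration (after Steps 1--3), while condition (ii) demands the permission list order of $D'$ itself. I would therefore argue that these two orderings agree on $S_{D'}$ by using Lemma \ref{rmk:aside1} to pin down how hue changes at $q$, $r$, and the Step 4 elements affect the number chain decomposition, noting in particular that color-$C$ elements outside $S$ keep their hues and that the branching structure of the chain determines relative positions of elements in $S$.

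For the second claim, under the hypothesis that $\{m\}$ is the smallest singleton hue with permission to merge in $D'$, we have $t' = m$ for $D'$. The non-singleton hues in $D'$ are exactly those of $D$ with $B$ replaced by $B'$ (or with $B$ deleted entirely if $|B| = 2$); in either case every non-singleton hue in $D'$ has minimum strictly greater than $m$, since $\min(B') = r > m$ and all other non-singleton blocks in $D$ already had minima exceeding $m$ (by minimality of $m$ in $D$). Therefore the $m'$-value of $D'$, if it exists, exceeds $t' = m$, placing $D'$ in the merge case with $B_{t'} = B'$. The merge then fuses $\{m\}$ with $B'$ to form $B$, relabels all such hues as $B$, and underlines $r$. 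Tracing through, this operation precisely reverses Steps 1--4 of the split: $r$ is re-underlined and rehued to $B$, the elements of $B' \setminus \{r\}$ keep their underlines and revert from $B'$ to $B$, $q$ (if present) reverts from $B'$ to $B$, and the Step 4 relabelings are undone wholesale, yielding exactly $D$.
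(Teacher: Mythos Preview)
Your proposal is correct and follows essentially the same approach as the paper's proof: both verify that $\{m\}$ wants to merge with $B'$ via Step~1, use Lemma~\ref{rmk:aside1} for condition~(i), observe that Step~4's permission-list relabeling yields condition~(ii), and then argue the second claim by noting that all non-singleton hues in $D'$ have minimum exceeding $m$. You are somewhat more careful than the paper in flagging the potential mismatch between the permission list order used in Step~4 (computed in the intermediate decoration) and the one required by condition~(ii) in $D'$; the paper's proof passes over this in the sentence ``the resulting hue order precisely corresponds to the condition~(ii) for permission,'' but the content of Lemma~\ref{rmk:aside1} (specifically, that Step~4 relabelings occur at chain ends and leave the number chain decomposition otherwise unchanged) does justify it, as you suggest.
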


\begin{proof}
    Since $\{m\}$ is the new hue of $r=\min(B')$ in $D'$ by Step 1 of the split procedure, $\{m\}$ wants to merge with $B'$ in $D'$.  By Lemma \ref{rmk:aside1} and the definition of $q$ for Situations Q and \nQ, the new number $r$ that appears in the number chain decomposition for $D'$ is the first number of hue $\{m\}$ in number chain order.  Thus condition (i) of permission is satisfied.

   For condition (ii), we consider Situation Q and Situation {\nQ} separately. In Situation Q, a number $q$ with hue $B'$ is labeled by Step 3, and directly precedes $r$ in the number chain of $D'$. In Step 4, we list the remaining numbers besides $q,r$ with hue $B$ in permission list order, and relabel them as $B'$ and then $\{m\}$.  In Situation \nQ, there is no $q$ and so we skip Step 3 and simply relabel the remaining numbers besides $r$ with hue $B$ in permission list order for Step 4.  Either way, the resulting hue order precisely corresponds to the condition (ii) for permission.
   
    Finally, if $\{m\}$ is the smallest singleton hue with permission, then $B'$ is the unique block it has permission to merge with by Lemma \ref{lem:clinging-unique}.  Moreover, there is no non-singleton block with minimum less than $m$ in $D'$, because $m$ was the smallest of any non-singleton block in $D$, so we do indeed merge $\{m\}$ with $B'$.  Merging $\{m\}$ with $B'$ re-underlines $r$ and restores all hues to the previous state in $D$, so $\phihue(D')=D$.
\end{proof}

\begin{lemma}\label{lem:undo-merge}
    Suppose $\phihue(D')=D$ via the merge case with $\{t\}$ merging with $B_{t}$. If $D$ is in the split case, then the block we split is the new block $B=\{t\}\cup B_{t}$ and $\phihue(D)=D'$.
\end{lemma}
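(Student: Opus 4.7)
The plan is to verify two claims: (a) the block chosen by the split procedure on $D$ is the merged block $B = \{t\} \cup B_t$, and (b) that this split reconstructs $D'$ exactly. For (a), observe that the merge only modifies the hue partition by fusing $\{t\}$ and $B_t$ into $B$, whose minimum is $t$. Since $D'$ was in the merge case, the smallest non-singleton block minimum in $D'$ was some $m > t$ (or no non-singleton block existed), so $B$ attains the smallest non-singleton block minimum in $D$. Hence if $D$ is in the split case, then the block that gets split is $B$.

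For (b), the main tool is Lemma \ref{rmk:aside2}, which describes how the chain decomposition in color $C$ changes under the merge: the entry $r = \min(B_t)$ is deleted, and hues $\{t\}, B_t$ are renamed $B$. I would first check that the merge's Situation Q/NQ matches the split's Situation Q/NQ when applied to $D$. In Situation Q, the predecessor $q$ of $r$ in $D'$ has hue $B_t$, which becomes $B$ in $D$, and becomes the first hue-$B$ entry in $D$'s chain; it lies in a chain starting at some $i \leq q < r$, so the split on $D$ detects the same $q$. In Situation NQ, the singleton chain $(r)$ is removed, and any remaining chains carrying hue $B$ in $D$ start at some $i > r$ (by Lemma \ref{lem:hue-facts}(1)), so the split is also in Situation NQ.

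With the situations matched, I would then verify that Steps 1--3 of the split exactly undo the explicit changes of the merge: Step 1 restores hue $\{t\}$ on $r$ and removes its underline; Step 2 restores hue $B_t$ on the $|B_t|-1$ largest elements of the hue-partition block $B_t$ (keeping their underlines); and, in Situation Q, Step 3 restores hue $B_t$ on $q$. Step 4 then relabels the remaining non-underlined hue-$B$ numbers (those other than $q, r$ and the Step~2 underlines). This set $S$ coincides exactly with the set appearing in permission condition (ii) for $\{t\}$ merging with $B_t$ in $D'$. By Lemma \ref{rmk:aside2}, the relative number-chain order of elements of $S$ is unchanged between $D'$ and $D$, so the permission list order on $S$ agrees in both decorations; condition (ii) then asserts precisely the $B_t$-before-$\{t\}$ pattern produced by Step 4.

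The principal obstacle is the Step 4 bookkeeping: confirming that the permission list order used in $D$ for the split matches the ordering that permission condition (ii) was verified against in $D'$. This is exactly where Lemma \ref{rmk:aside2} is essential, since it guarantees that deleting $r$ from the chain does not perturb the relative order of the other entries in $S$, and hence that Step 4 reproduces the original hue distribution of $D'$ rather than a rearrangement.
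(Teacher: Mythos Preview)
Your argument is correct and follows essentially the same route as the paper's proof: identify $B$ as the block to split by minimality of $t$, match Situations Q/NQ via Lemma \ref{rmk:aside2}, and then verify that Steps 1--4 of the split restore the hue labels of $D'$ using permission condition (ii) and the invariance of permission-list order on $S$. One minor slip: in Situation Q you assert $i \le q < r$, but $q$ need not be smaller than $r$; the needed inequality $i < r$ follows instead from the fact that $r$ lies in the chain beginning at $i$ but is not its first element (since $q$ precedes it).
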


\begin{proof}
    If $D$ is in the split case, we must split the block with the smallest minimum; this minimum is $t$ due to the merge definition. Thus the output $\phihue(D)$ is formed by splitting the new block $B$ back into $t$ and $B_{t}$.  
    
    Notice that $\phihue(D)$ agrees with $D'$ on its underlined letters and their hues by Step 2 of the split procedure, and the Step 1 rule of relabeling $r=\min(B_{t})$ as hue $\{t\}$ returns $r$ to its original label (because $t$ wanted to merge with $B_t$ in $D'$). 

    Notice also that $D$ is in Situation Q for the split procedure if and only if $D'$ and $\phihue(D)$ are both in Situation Q for permission, by Lemmas \ref{rmk:aside1} and \ref{rmk:aside2}.  Thus Step 3, if it occurs, also returns the number $q$ to its original hue.
    
    Finally, Step 4 of the split procedure restores the remaining hues to their permission list order in $D'$, because their original order had all $B_t$'s preceding all $\{t\}$'s by condition (ii) of permission to merge. Thus indeed $\phihue(D)=D'$.
\end{proof}

The next two propositions combined show that $\phihue$ is an involution.  Note that here is where we will use the chosen convention that the exponents $\mathbf{k}$ are weakly increasing on each color; this is actually an essential part of the setup in order for the map to be an involution.

\begin{prop}\label{prop:split}
Suppose $\phihue(D)=D'$ via the split case. Then $\phihue(D') = D$.
\end{prop}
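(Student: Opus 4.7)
The plan is to reduce to Lemma \ref{lem:undo-split}, which already provides that $\{m\}$ has permission to merge with $B' = B \setminus \{m\}$ in $D'$. To conclude $\phihue(D') = D$, it suffices to verify that applying $\phihue$ to $D'$ falls into the merge case and selects $\{m\}$ as the singleton to merge with $B'$. This amounts to showing (a) the smallest minimum of a non-singleton block in $D'$ exceeds $m$, and (b) $\{m\}$ is the smallest singleton hue in $D'$ with permission to merge. Given (a) and (b), the second clause of Lemma \ref{lem:undo-split} finishes the proof.

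Part (a) is immediate: the non-singleton blocks of $D'$ are the non-singleton blocks of $D$ other than $B$, together with $B'$ if $|B'| \geq 2$. The minimality of $B$'s minimum $m$ among non-singleton blocks of $D$ forces all surviving blocks to have minima greater than $m$, and $\min B' = r > m$ as well.

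Part (b) is the main task, which we will establish by contradiction. Suppose $\{t'\}$ with $t' < m$ has permission to merge with some block $B_{t'}$ in $D'$. Since $D$ is in the split case, the definition of the split case gives that no singleton hue $\{t'\}$ with $t' < m$ has permission to merge in $D$. We will show that $\{t'\}$ nonetheless has permission in $D$, producing the desired contradiction. There are three sub-cases for $B_{t'}$: a block inherited from $D$'s hue partition, the new block $B'$, or the new singleton $\{m\}$. The case $B_{t'} = B'$ is ruled out since $\min B' = r$ has hue $\{m\} \neq \{t'\}$ in $D'$ by Step 1 of the split. In the other two sub-cases, the corresponding block in $D$ is $B'' = B_{t'}$ or $B'' = B$, respectively. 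That $\{t'\}$ wants to merge with $B''$ in $D$ follows because Step 4 of the split only relabels hue $B$ as $B'$ or $\{m\}$, never as $\{t'\}$, so the hue of $\min(B'')$ is the same in $D$ and $D'$. Condition (i) of permission transfers from $D'$ to $D$ via Lemma \ref{rmk:aside1}: the hue-$\{t'\}$ assignments are unchanged by the split, and the only structural modification of the number chain is insertion of $r$, which has hue $\{m\}$ rather than $\{t'\}$, so the ordering of hue-$\{t'\}$ numbers in the chain is preserved.

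The main obstacle will be verifying condition (ii) of permission in $D$, particularly in the sub-case $B_{t'} = \{m\}$. There the set $S_{D'}$ of non-underlined hue-$\{t'\}$-or-$\{m\}$ numbers used in the permission-list comparison is a strict subset of the corresponding set $S_D$ of non-underlined hue-$\{t'\}$-or-$B$ numbers, since the former hue-$B$ numbers are split between $B'$ and $\{m\}$ in $D'$. The key technical step will be to use the explicit rule in Step 4 --- that the remaining hue-$B$ numbers are relabeled to $B'$ first and then to $\{m\}$ in permission-list order --- together with Lemma \ref{rmk:aside1}'s description of the local chain modification, to conclude that the permission-list order in $D'$ forces the corresponding hue-$B$-before-hue-$\{t'\}$ order in $D$, thereby producing the contradiction.
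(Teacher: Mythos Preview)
Your overall plan matches the paper's proof: reduce via Lemma~\ref{lem:undo-split} to showing that $\{m\}$ is the smallest singleton hue with permission to merge in $D'$, then argue by contradiction through a case analysis on $B_{t'}$. Your elimination of $B_{t'}=B'$ and the ``inherited'' case, and your reduction to the sub-case $B_{t'}=\{m\}$, are all essentially as in the paper.

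However, there is a genuine gap in your treatment of the sub-case $B_{t'} = \{m\}$. Your planned transitivity argument combines two facts: from Step~4, the remaining hue-$B$ numbers are relabeled $B'$ first and then $\{m\}$ in permission-list order; and from condition~(ii) in $D'$, the hue-$\{m\}$ numbers precede the hue-$\{t'\}$ numbers. To conclude that the hue-$B'$ numbers precede the hue-$\{t'\}$ numbers (and hence that condition~(ii) holds for $\{t'\},B$ in $D$), you need at least one hue-$\{m\}$ number in the relevant permission list to act as a pivot between the $B'$'s and the $\{t'\}$'s. This requires $k_m > 1$. When $k_m = 1$, the only hue-$\{m\}$ number in $D'$ is $r$, and condition~(ii) in $D'$ gives no information about where the $B'$-hue numbers sit relative to the $\{t'\}$-hue numbers; the transitivity step fails.

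The paper closes this gap using the standing convention that the exponents within each color are weakly increasing: since $t' < m$ are in the same color, $k_{t'} \le k_m$. If $k_m = 1$, then $k_{t'} = 1$, so the only hue-$\{t'\}$ number is $m$ itself, which is excluded from $S_D$; condition~(ii) for $\{t'\},B$ in $D$ is then vacuous. The paper explicitly flags this as the point where the ordering convention on $\mathbf{k}$ is essential for $\phihue$ to be an involution. Your plan never invokes this assumption, and without it the argument cannot be completed. Once the case $k_m=1$ is disposed of this way, the Situation~Q/NQ analysis you defer as ``the key technical step'' is carried out in the paper along the lines you sketch, but it genuinely needs this preliminary case split.
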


\begin{proof}
  Let $B$ be the block in $D$ that is split, with $m = \min(B)$ and $B'=B\setminus \{m\}$.   By Lemma \ref{lem:undo-split}, it remains to show that there is no $t<m$ that has permission to merge with some $B_{t}$ in $D'$.

  Assume for contradiction that some $t<m$ has permission to merge with a block $B_{t}$.  We consider several possibilities for $\{t\},B_{t}$ in terms of whether each is equal to either $\{m\}$ or $B'$.  If neither $\{t\}$ nor $B_{t}$ is equal to either $\{m\}$ or $B'$, then by Lemma \ref{rmk:aside1}, their positions in hue chain decomposition order are unchanged from $D$ to $D'$, and so $\{t\}$ also had permission to merge with $B_{t}$ in $D$ and $t<m$, contradicting the fact that $D$ is in the split case.   If $B_{t}=B'$, then $\{t\}$ and $\{m\}$ both want to merge with $B_{t}$, a contradiction.  If $\{t\}$ is equal to either $\{m\}$ or $B'$ then we have a contradiction because we assumed $t<m$.

  Thus, the only remaining possibility is that $B_{t}=\{m\}$.  Then $\{t\}$ wants to merge with $\{m\}$ and is the hue of number $m$.  Therefore $\{t\}$ wants to merge with $B$ in $D$.  Since $\{t\}$ has permission from $\{m\}$, the $\{t\}$ labeling $m$ is the first $\{t\}$ in hue chain order in $D'$, and so it also is first in $D$ by Lemma \ref{rmk:aside1}.  Thus $\{t\}$ satisfies condition (i) for permission to merge with $B$ in $D$.

  We will show that $\{t\}$ and $B$ also satisfy condition (ii) to merge in $D$, which is a contradiction because $D$ is in the split case.  If $k_m=1$, then $k_t=1$ as well by our assumed ordering of $\mathbf{k}$, and so there are no $\{t\}$ hues besides the first.  Thus condition (ii) for $\{t\}$ and $B$ is vacuously satisfied.  We therefore may assume that $k_m>1$, which means there is some number with hue $\{m\}$ in $D'$ besides the first in hue chain order.

  We now consider two cases, based on whether $\{t\},\{m\}$ are in Situation Q or NQ.

\textbf{Situation Q.}  If $\{t\},\{m\}$ are in Situation Q for merging, then the first $\{m\}$ and $\{t\}$ hue occur consecutively in a hue chain.  Thus the first number of hue $\{m\}$ does not form a singleton chain, and so $\{m\},B'$ are also in Situation Q, and the first $B'$ occurs just before the first $\{m\}$ in hue chain order.  Thus we have numbers $q,r,m$ consecutively in a hue chain with hues $B',\{m\},\{t\}$, and these are the first instances of each of these hues in hue chain order in $D'$.  In $D$, this hue chain correspondingly has consecutive entries $q,m$ with hues $B,\{t\}$ by Lemma \ref{rmk:aside1}.

Let $S$ be the set of all remaining numbers (besides the first three in number chain order) with hues $B'$, $\{m\}$, or $\{t\}$, and consider the permission list order of $S$.  Note that the permission list order for the remaining numbers of hue $B',\{m\}$ is an ordered sub-list of the full permission list, and same for $\{m\},\{t\}$.  Since $k_m>1$, there is at least one number with hue $\{m\}$ in $S$, and so by transitivity all the numbers with hue $\{t\}$ in the list occur after those with hue $B'$ as well.  Thus, in $D$, before splitting, the numbers with hue $\{t\}$ in the permission list for $\{t\}$ and $B$ occur after those with hue $B$, again by Lemma \ref{rmk:aside1}. It follows that $\{t\}$ has permission to merge with $B$ in $D$, a contradiction.

\textbf{Situation NQ.}  If $\{t\},\{m\}$ are in Situation NQ, then $(m)$ is a singleton number chain with hue chain $(\{t\})$, and this is the first $\{t\}$ in hue chain order.  Note that this $(\{t\})$ occurs before the first $\{m\}$ in hue chain order, because if a number with hue $\{m\}$ appeared earlier, it would point to number $m$ in the ordering next.  

Now, $\{m\},B'$ can either be in Situation Q or Situation NQ, and correspondingly the hue chain decomposition in $D'$ can look like either:
$$\cdots (\{t\})\cdots (\ldots,B',\{m\})\cdots (\ldots, B')\cdots (\ldots,B')\cdots(\ldots,\{m\})\cdots (\ldots,\{t\})\cdots $$
or
$$\cdots (\{t\})\cdots (\{m\})\cdots (\ldots, B')\cdots (\ldots,B')\cdots(\ldots,\{m\})\cdots (\ldots,\{t\})\cdots $$  
Note that since $\{t\},\{m\}$ satisfy condition (ii) for permission, all $\{t\}$ hues besides the first occur after all $\{m\}$ hues in permission list order, including the first $\{m\}$ hue and including necessarily at least one $\{m\}$ hue elsewhere in the decoration (since $k_m>1$).  But these other $\{m\}$ hues occur after all extra $B'$ hues in permission list order, which all occur to the right of the first $\{m\}$ shown above.  Thus all extra $\{t\}$ hues occur after any $B'$ or $\{m\}$ that were labeled $B$ in $D$, and so $\{t\},B$ satisfy condition (ii) for permission in $D$, again a contradiction.
\end{proof}

\begin{prop}\label{prop:merge}
    Suppose  $\phihue(D')=D$ via the merge case.  Then $\phihue(D)=D'$. 
\end{prop}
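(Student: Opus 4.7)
The plan is to apply Lemma \ref{lem:undo-merge}: once I verify that $D$ is in the split case, the lemma immediately yields $\phihue(D)=D'$. The non-singleton blocks of $D$ consist of the merged block $B=\{t\}\cup B_{t}$ together with all non-singleton blocks of $D'$ other than $B_{t}$. Since $D'$ was in the merge case, every non-singleton block of $D'$ has minimum strictly greater than $t$, so $B$ is the unique smallest non-singleton block of $D$, with $\min(B)=t$. Thus the remaining task is to show that no singleton hue $\{s\}$ with $s<t$ has permission to merge in $D$.

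I would assume for contradiction that such an $\{s\}$ exists, and let $B_{s}$ be the block $\{s\}$ would merge into. The argument divides according to whether $B_{s}=B$ or not. In Case A ($B_{s}\neq B$), the block $B_{s}$ is carried over unchanged from $D'$ and the hue of $\min(B_{s})$ agrees in $D$ and $D'$; moreover $\min(B_{s})\neq t$ (else $t\in B_{s}$ would force $B_{s}=B$, a contradiction), so the predecessor of $\min(B_{s})$ in the two chains agrees as well. By Lemma \ref{rmk:aside2}, the chain decomposition of $D$ is obtained from that of $D'$ by deleting $r$ (of hue $\{t\}$) and relabeling hues $\{t\},B_{t}$ to $B$, none of which interfere with hues $\{s\},B_{s}$. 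The permission list for $\{s\},B_{s}$ is therefore the same in $D$ and $D'$, so $\{s\}$ already had permission to merge in $D'$, contradicting the minimality of $t$ in $D'$.

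In Case B ($B_{s}=B$), the number $t=\min(B)$ has hue $\{s\}$ in $D$; since $\{s\}\neq B$, this hue was already $\{s\}$ in $D'$, so $\{s\}$ wants to merge with $\{t\}$ in $D'$. I would derive the contradiction by showing $\{s\}$ has permission to merge with $\{t\}$ in $D'$, again violating the minimality of $t$. Condition (i) is immediate from Lemma \ref{rmk:aside2}: the numbers of hue $\{s\}$ agree in $D$ and $D'$ and the deletion of $r$ (of hue $\{t\}\neq\{s\}$) preserves the relative order of hue-$\{s\}$ entries in the chain, so $t$ being first in $D$ makes it first in $D'$. For condition (ii), I would split according to Situations Q and NQ for $\{s\},B$ in $D$ and trace the correspondence, via Lemma \ref{rmk:aside2}, to the situation of $\{s\},\{t\}$ in $D'$, where the predecessor of $t$ in $D'$ is either $r$ itself or some other hue-$\{t\}$ number. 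In each subcase, the permission list for $\{s\},B$ in $D$ is related to the list for $\{s\},\{t\}$ in $D'$ by removing $r$ and adjoining the non-underlined hue-$B_{t}$ entries of $D'$ (which have hue $B$ in $D$); since the $D$-list has all hue-$B$ entries preceding all hue-$\{s\}$ entries, the hue-$\{t\}$ sub-list in $D'$ inherits the same property. The main technical obstacle is precisely this last step, which requires carefully tracking how $r$'s position in the permission list of $D'$ (as a color-$C$ element sitting after $t$ in the chain) is compatible with preceding every hue-$\{s\}$ entry in each Situation Q/NQ combination.
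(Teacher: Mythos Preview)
Your overall strategy is exactly the paper's: reduce to Lemma \ref{lem:undo-merge}, then rule out any smaller singleton $\{s\}$ having permission to merge in $D$, splitting into the cases $B_s\ne B$ and $B_s=B$. Your Case A argument is correct and matches the paper's (which is equally terse there); your observation that $\min(B_s)\ne t$ forces the predecessor of $\min(B_s)$ to agree in the two chains is exactly the point, since a predecessor has hue with minimum $\min(B_s)$, whereas $r$ has hue $\{t\}$.

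The gap is that you do not actually carry out Case B, condition (ii). You correctly flag this as the ``main technical obstacle'' and sketch a plan, but the inference ``the hue-$\{t\}$ sub-list in $D'$ inherits the same property'' is precisely the nontrivial content of the proposition. The permission list for $\{s\},\{t\}$ in $D'$ includes $r$ (which is underlined in $D$ and hence absent from the $\{s\},B$ list there) and is computed with respect to $D'$'s number chain rather than $D$'s; you must verify that $r$, and in general every hue-$\{t\}$ entry in $D'$, precedes every hue-$\{s\}$ entry. The paper fills this in with an explicit two-case analysis. In Situation Q for $\{s\},B$ in $D$, one observes that $D'$ was itself in Situation Q for the $\{t\},B_t$ merge, so that $q,r,t$ are consecutive in $D'$'s chain with hues $B_t,\{t\},\{s\}$; the remaining hue-$B$ entries in $D$ then correspond exactly to the remaining hue-$B_t$ and hue-$\{t\}$ entries in $D'$, and the permission ordering transfers directly. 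In Situation NQ, the paper writes out the two possible shapes of $D'$'s hue chain (according to whether $\{t\},B_t$ were themselves in Situation Q or NQ) and checks directly that every later hue-$\{s\}$ entry comes after all hue-$\{t\}$ entries, including the one at $r$. Without this analysis your argument is incomplete.
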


\begin{proof}
  Suppose $D'$ was in the merge case with $t'$ and $B_{t'}$, and set $B= \{t'\}\cup B_{t'}$. By Lemma \ref{lem:undo-merge}, it suffices to show that $D$ is in the split case.  Assume for contradiction that $D$ is in the merge case, with some pair $t,B_t$ with $t<t'$.  If $\{t\}$ has permission to merge with $B_t$ in $D$ and both are not equal to block $B$, then by Lemma \ref{rmk:aside2} all positions of hues $\{t\}$ and $B_t$ are unchanged in hue chain order between $D$ and $D'$.  Thus they were a pair with permission in $D'$ as well, a contradiction to the minimality of $t'$ for the merge case.  Note that $\{t\}$ cannot be $B$ either since $B$ is not a singleton.

    So, suppose $B_t=B$.  Then $\{t\}$ wants to merge with $B$ in $D$, so it is the hue of $t'=\min(B)$, and we wish to show it had permission to merge with $\{t'\}$ in $D'$, contradicting the minimality of $t'$ in $D'$.  Since $\{t\}$ has permission to merge with $B$ in $D$, the number $t'$ is the first number of hue $\{t\}$ in number chain order for $D$, and by Lemma \ref{rmk:aside2}, it is also the first $\{t\}$ in hue chain order for $D'$.  Thus condition (i) for permission is satisfied.

    We consider the two cases of whether the pair $\{t\},B$ in $D$ is in Situation Q or \nQ\ separately to show that condition (ii) is satisfied for $\{t\}$ and $\{t'\}$.

    \textbf{Situation Q.}  In this case, there is a (first) number $q$ with hue $B$ preceding $t'$ (with hue $\{t\}$) in number chain order.  Since $q$ does not form its own singleton chain, $D'$ must have been in Situation Q for permission as well, and so in $D'$ we have $q,r,t'$ consecutively in a number chain, with hues $B_{t'},\{t'\},\{t\}$, and these are the first instances of each of these three hues in number chain order.  

    Among the remaining hues $B,\{t\}$ in permission list order in $D$ (after numbers $q,t'$), those of hue $B$ precede those of hue $\{t\}$ by condition (ii) for permission.  By Lemma \ref{rmk:aside2}, in $D'$, those of hue $B$ came from relabeling the hues $B_{t'}$ and $\{t'\}$ in those positions, and so indeed, in permission list order, those of hue $\{t'\}$ come before those of hue $\{t\}$ in $D'$ as well.  Thus condition (ii) is satisfied for $\{t\},\{t'\}$ in $D'$.

    \textbf{Situation \nQ.}  In this case, $(t')$ is its own number chain, with hue chain $(\{t\})$.  
    Thus it is its own number chain in $D'$ as well by Lemma \ref{rmk:aside2}. Writing $r = \min(B_{t'})$, since $r$ has hue $\{t'\}$, it would point to number $t'$ if $r$ occurred before $t'$ in number chain order, a contradiction.  So $r$ occurs later in number chain order than $t'$ in $D'$.  Thus the hue chain decomposition of $D$ has one of the following two forms:
    \begin{equation*}
        \ldots(\{t\})\ldots (\{t'\})\ldots (\cdots B_{t'})\ldots (\cdots B_{t'})\ldots (\cdots \{t'\})\ldots
    \end{equation*} 
    or
    \begin{equation*}
        \ldots(\{t\})\ldots (\cdots B_{t'},\{t'\}\cdots)\ldots (\cdots B_{t'})\ldots (\cdots B_{t'})\ldots (\cdots \{t'\})\ldots
    \end{equation*} 
We claim that, excluding the first $\{t\}$ hue, all $\{t'\}$ hues precede all $\{t\}$ hues.  

We compare with the corresponding chains in $D$. By Lemma \ref{rmk:aside2}, the hue chain decomposition in $D$ respectively looks like either:
    $$\ldots(\{t\})\ldots ({\color{red}\times})\ldots (\cdots B)\ldots (\cdots B)\ldots (\cdots B)\ldots $$ or
    $$\ldots(\{t\})\ldots (\cdots B, {\color{red}\times}\cdots )\ldots (\cdots B)\ldots (\cdots B)\ldots (\cdots B)\ldots $$
where the ${\color{red} \times}$ indicates that we deleted that entry.

Note that any other $\{t\}$ hues must come after all of these later $B$ hues in permission list order because $\{t\}$ has permission from $B$, and so in particular they occur after the deleted $\{t'\}$ hue in $D'$ as well, and after all $\{t'\}$ hues in permission list order in $D'$.  Thus condition (ii) is satisfied in this case as well. 
\end{proof}

Propositions \ref{prop:split} and \ref{prop:merge} above combine to give our main result of this section.

\begin{thm}
    The map $\varphi^{\mathrm{hue}}$ is a sign reversing involution on $\mathcal{MD}(\mathbf{k})$.
\end{thm}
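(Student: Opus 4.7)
My plan is to combine the preceding propositions with a short verification of the sign statistic. I would split the argument into two parts: the involution property and the sign-reversing property.

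For the involution property, every $D \in \mathcal{MD}(\mathbf{k})$ falls into exactly one of three mutually exclusive cases by Definition \ref{def:involution-2}: split, merge, or fixed. Fixed points trivially satisfy $\phihue^2(D) = D$. For split and merge cases, Propositions \ref{prop:split} and \ref{prop:merge} respectively give $\phihue^2(D) = D$. Combined with the well-definedness lemma proved just above, this establishes that $\phihue$ is an involution on $\mathcal{MD}(\mathbf{k})$.

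For the sign-reversing property, I would first observe that in a mismatched decoration no unboxed number can be underlined: by Definition \ref{def:decoration}, underlining an unboxed number requires its hue to match its color, which is exactly what the mismatched condition forbids. Hence $u(D) = \sum_{B \in \mathcal{P}}(|B|-1)$, where $\mathcal{P}$ is the hue partition of $D$. It then suffices to check that this sum changes by $\pm 1$ in the split and merge cases. Splitting a block $B$ into $\{m\}$ and $B'$ replaces $|B|-1$ by $0 + (|B'|-1) = |B|-2$, a decrease of one matching Step 1's removal of the underline on $r$ (Steps 2--4 only relabel hues of numbers that are not underlined in $D$, as those numbers are either the minimum of a block, an unboxed number with mismatched hue, or the number $q$ appearing in the number chain, none of which are underlined). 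Merging $\{t\}$ and $B_t$ into $B = \{t\} \cup B_t$ replaces $0 + (|B_t|-1)$ by $|B|-1 = |B_t|$, an increase of one matching the explicit underlining of $\min(B_t)$. In both non-fixed cases $(-1)^{u(D)}$ therefore flips sign.

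The main difficulty has been absorbed entirely by the preceding propositions and the technical chain-decomposition lemmas, which handle the delicate interaction between permission conditions and the split/merge procedures. This final theorem is essentially a bookkeeping wrap-up; I do not expect significant obstacles in writing it up, provided the above accounting of underlined numbers is stated carefully.
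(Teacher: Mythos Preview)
Your approach is essentially the same as the paper's, which simply states that Propositions \ref{prop:split} and \ref{prop:merge} combine to give the result and does not write out a separate proof; you are just more explicit about the sign-reversing half, which the paper leaves implicit.

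One small inaccuracy in your bookkeeping: Step 2 of the split procedure relabels the hues on the elements of $B' \smallsetminus \{r\}$, and those elements \emph{are} underlined in $D$ (they were among the largest $|B|-1$ elements of $B$). So your parenthetical ``Steps 2--4 only relabel hues of numbers that are not underlined in $D$'' is not literally true. The conclusion is unaffected, though, since Step 2 merely relabels those hues from $C_B$ to $C_{B'}$ without altering their underline status; the only change to underlining in the split case is the single removal in Step 1. If you tighten that sentence to say that Steps 2--4 never add or remove an underline (rather than that they only touch non-underlined numbers), the argument is clean.
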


We now describe the fixed points of the involution, which all have positive sign.

\begin{defn} \label{def:fixedpoint-decoration}
    A mismatched decoration $D$ is a \textbf{fixed point} if it contains no $m$ or $t$ as in Definition \ref{def:involution-2}.  That is, its hue partition is the all-singleton partition, and no hue has permission to merge with any other.
\end{defn}

It is clear that these are precisely the fixed points of $\varphi^{\mathrm{hue}}$. Note also that a fixed point has no remaining underlined numbers (no unboxed underlined numbers since it is mismatched, and no boxed underlined numbers since $\mathcal{P}_D$ is all singletons). A fixed point decoration is thus equivalent to the data of a mapping from $\{1, \ldots, n\} \setminus \{1, r_1+1, r_1+r_2+1\}$ to the boxed numbers, each considered as a singleton hue, satisfying the conditions above.

\begin{example} \label{ex:special-fixed-points}
    As a special case of interest, consider a mismatched decoration $D$ whose hue partition is all singletons. If every boxed number has either its own hue, or a hue that does not match its color, $D$ is automatically a fixed point. (In particular, the number $r$ in Definition \ref{def:fixedpoint-decoration} cannot exist.)
\end{example}

\begin{example}
  Going back to the product in Example \ref{ex:decoration},  $\int \psi_1^2\psi_2^3\psi_3^5\cdot \psi_7^4\cdot \psi_{18}\psi_{19}$, below is an example of a fixed point.
\begin{center}
    \begin{tabular}{cccccccc}
    $R$ & & $\boxed{\cancel{\phantom{.}{\color{red}1}\phantom{.}}}$ & $\boxed{\phantom{.}{\color{red}2}_{R_2}\phantom{.}}$ & $\boxed{{\phantom{.}{\color{red}3}_{R_{1}}}\phantom{.}}$ & ${\color{red}4}_{V_{18}}$ & ${\color{red}5}_{A_{7}}$ & ${\color{red}6}_{A_{7}}$ \\
    
    $A$ & & $\boxed{\cancel{\phantom{.}\color{blue}7\phantom{.}}}$ & ${\color{blue}8}_{R_{3}}$ & ${\color{blue}9}_{R_2}$ & ${\color{blue}10}_{R_{3}}$ & ${\color{blue}11}_{R_1}$ & \\
    
 $G$ & &    $\cancel{{\color{mygreen}12}}$ & ${\color{mygreen}13}_{R_2}$ & ${\color{mygreen}14}_{A_7}$ & ${\color{mygreen}15}_{R_{3}}$ & & \\
    
    $Y$ & & ${\color{y1}16}_{R_{3}}$ & ${\color{y1}17}_{R_{3}}$ & & & & \\
    
 $V$ & &  $\boxed{{\color{y2}18}_{A_{7}}}$ & $\boxed{{\color{y2}19}_{V_{19}}}$ & & & &
\end{tabular}
\end{center}

Notice that $\{1\}$ wants to merge with $\{3\}$, but it cannot, because there is an $R_1$ before an $R_3$ on the non-red letters in ascending order.  
\end{example}

Combining the sign reversing involution of Definition \ref{def:involution-2} with Lemma \ref{lem:alternating-2}, we obtain the following theorem.

\begin{thm}\label{thm:main2}
    The intersection product \eqref{eqn:psi-product} is equal to the number of fixed point decorations with parameters given by the exponents. In particular, the product is nonzero if and only if a fixed point decoration exists.
\end{thm}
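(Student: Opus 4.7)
The plan is to apply the two sign-reversing involutions $\underline{\varphi}$ and $\phihue$ in sequence, reducing the intersection product to a positive count. The first reduction is already handled by Lemma \ref{lem:alternating-2}, which gives
\[
\int \prod_i \psi_i^{k_i} = \sum_{D \in \mathcal{MD}(\mathbf{k})} (-1)^{u(D)}.
\]
From here, the strategy is to invoke the fact that $\phihue$ is an involution on $\mathcal{MD}(\mathbf{k})$ (Propositions \ref{prop:split} and \ref{prop:merge}) and verify two remaining ingredients: that $\phihue$ is sign-reversing on non-fixed points, and that every fixed point satisfies $(-1)^{u(D)} = +1$.

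For the sign-reversing property, I would read off the effect of Definition \ref{def:involution-2} on $u(D)$ directly. In the merge case, exactly one new underline is added (on $\min(B_t)$); in the split case, Step 1 removes exactly one underline (on $r = \min(B')$), while Steps 2--4 only relabel hues and do not touch which numbers are underlined. In both cases $u(D)$ changes by exactly $\pm 1$, so the sign $(-1)^{u(D)}$ flips.

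For the sign of fixed points, I would invoke Definition \ref{def:fixedpoint-decoration}: the nonexistence of an $m$ forces the hue partition to be the all-singleton partition, so no boxed number carries an underline, since by Definition \ref{def:decoration} such underlines arise only from blocks of size at least $2$. The mismatched condition (inherited from being a fixed point of $\underline{\varphi}$) forbids underlines on unboxed numbers as well. Thus $u(D) = 0$ for every fixed point, and the sum collapses to $|\Fix(\phihue)|$, which is precisely the number of fixed point decorations with parameters $\mathbf{k}$. Since this quantity is a nonnegative integer, it is nonzero if and only if at least one fixed point decoration exists, proving the second assertion.

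The main obstacle has already been overcome in the preceding subsection: showing that $\phihue$ is a well-defined involution required the delicate chain-decomposition analysis of Lemmas \ref{rmk:aside1} and \ref{rmk:aside2} and the case work in Propositions \ref{prop:split} and \ref{prop:merge}, where the subtle interplay between Situations Q and \nQ\ and the increasing assumption on $\mathbf{k}$ all needed to be tracked carefully. With those results in hand, Theorem \ref{thm:main2} follows immediately by the standard cancellation paradigm of \eqref{eq:alt-sum-X}.
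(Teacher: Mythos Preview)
Your proposal is correct and follows essentially the same approach as the paper: reduce via Lemma \ref{lem:alternating-2}, then apply the sign-reversing involution $\phihue$ (established by Propositions \ref{prop:split} and \ref{prop:merge}) and observe that fixed points have no underlines, hence contribute $+1$. You spell out the sign-reversing check and the $u(D)=0$ argument more explicitly than the paper does, but the argument is the same.
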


\begin{proof}
    The map $\varphi^{\mathrm{hue}}$ cancels the remaining terms, and since none of the fixed point decorations have any underlined numbers, they all contribute $+1$ to the sum.
\end{proof}

\section{Applications} \label{sec:examples}

We now give a number of special cases and consequences of Theorem \ref{thm:main2}.

\subsection{A concrete calculation}

Consider 10 marked points in three colors with $r_1 = 5, r_2 = 4, r_3 = 1$. We calculate $\int \psi_1^2 \cdot \psi_6^2\psi_7^3$. Some of the hue assignments are determined, and the others are denoted below with question marks:

 \begin{center}
    \begin{tabular}{cccccccc}
    $R$ &
    & $\boxed{\cancel{\phantom{.}{\color{red}1}\phantom{.}}}$
    & ${\color{red}2}_{A_{?}}$
    & ${\color{red}3}_{A_{?}}$
    & ${\color{red}4}_{A_{?}}$
    & ${\color{red}5}_{A_{?}}$ \\
    
    $A$ &
    & $\boxed{\cancel{\phantom{.}\color{blue}6\phantom{.}}}$
    & $\boxed{{\phantom{.}{\color{blue}7}_{A_{?}}\phantom{.}}}$
    & ${\color{blue}8}_{R_1}$
    & ${\color{blue}9}_{R_1}$ \\

    $G$ &
    & $\cancel{{\color{mygreen}10}}$
    
\end{tabular}
\end{center}
It remains to determine the assignments of hues $A_6$ and $A_7$ to $2, 3, 4, 5$ and $7$.

If $7$ has hue $A_7$, the decoration is a fixed point by Example \ref{ex:special-fixed-points}, and there are $\binom{4}{2}=6$ ways to choose which of $2,3,4,5$ has hue $A_7$ and which of the other two have hue $A_6$.  If instead $7$ has hue $A_6$, then by Definition \ref{def:involution-2} with $t=6$, the decoration is non-fixed if and only if all other occurrences of hue $A_7$ precede all occurrences of $A_6$ (excluding at $7$ itself).  Thus there are $4-1=3$ fixed points in this case, for a total of $9$ fixed point decorations.  It follows that 
\[
\int_{\overline{M}_{0, [5, 4, 1]}} \psi_1^2 \cdot \psi_6^2 \psi_7^3 = 9.
\]

\subsection{Losev-Manin case} \label{sec:losev-manin}

We apply Theorem \ref{thm:main2} to recover the computation of all integrals \eqref{eqn:psi-product} over the Losev-Manin space $\mathrm{LM}_{2\vert n}$, which we view as having three colors -- red, azure, and green -- with one red marked point $0$, one green marked point $\infty$, and $n$ azure marked points $1,2,\ldots,n$. 

\begin{example} \label{ex:Losev-Manin-fixed-points}
    Suppose we wish to compute $\int \psi_0\cdot \psi_1^2\psi_2\cdot \psi_\infty^2$ on $\mathrm{LM}_{2|7}$. Then the fixed point decorations would have to label a diagram that looks like: 

\begin{center}
    \begin{tabular}{ccccccccc}
        $R$ & & $\boxed{\cancel{\phantom{.}0\phantom{.}}}$ \\
        $A$ & & $\boxed{\cancel{\phantom{.}1\phantom{.}}}$ & $\boxed{2}$ & 3 & 4 & 5 & 6 & 7 \\
        $G$ & & $\boxed{\cancel{\infty}}$
    \end{tabular}
\end{center}
By counting, there is not enough room to place the three azure colors without assigning an unboxed azure number an azure hue. In particular, the set $\mathcal{MD}(\mathbf{k})$ of mismatched decorations is empty. Similarly, $\mathcal{MD}(\mathbf{k})$ is empty, and the integral vanishes, whenever there is a $\psi$ class on the azure numbers. In fact, it is known that those $\psi$ classes are zero in $A^1(\mathrm{LM}_{2|7})$ (see \cite{cavalieri2016}).

    In contrast, if we only have $\psi$ classes on the marked points $0$ and $\infty$, we can place the colors on the azure numbers in any manner to obtain a fixed point, such as the following fixed point decoration for $\int \psi_0^4\psi_\infty^2$:

\begin{center}
    \begin{tabular}{ccccccccc}
        $R$ & & $\boxed{\cancel{\phantom{.}0\phantom{.}}}$ \\
        $A$ & & $\cancel{\phantom{.}1\phantom{.}}$ & $2_R$ & $3_G$ & $4_G$ & $5_R$ & $6_R$ & $7_G$ \\
        $G$ & & $\boxed{\cancel{\infty}}$
    \end{tabular}
\end{center}
\end{example}
The above two examples generalize as follows.

\begin{prop}[{{\cite[Exercise 52]{cavalieri2016}}}]\label{prop:losev-manin}
    The product $\int \psi_0^a\cdot \psi_1^{k_1}\cdots \psi_\ell^{k_\ell} \cdot \psi_\infty^b$ on $\mathrm{LM}_{2|n}$ (with $a+b+\sum k_i=n-1$) is equal to $0$ if any $k_i>0$, and if all $k_i=0$ then we have $$\int_{\mathrm{LM}_{2|n}} \psi_0^a\psi_\infty^b=\binom{n-1}{a}.$$ 

\end{prop}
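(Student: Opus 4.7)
The plan is to apply Theorem \ref{thm:main2} and count fixed point decorations on $\mathrm{LM}_{2|n} = \overline{M}_{0,[1,n,1]}$ directly. By the symmetry of marked points within each color class, we may assume without loss of generality that the positive azure exponents occur at the first $\ell$ azure points $1, \ldots, \ell$, satisfying the convention of Section \ref{sec:SRI2} that $1 \leq k_1 \leq \cdots \leq k_\ell$. The three crossed-out points are $0$, $1$, and $\infty$ (the smallest element of each color class).

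For the case when all $k_i = 0$: the only hues present are $R_0$ (with $a$ copies) and $G_\infty$ (with $b$ copies), totaling $a+b = n-1$ copies. These must be placed on the $n-1$ non-crossed-out azure points $\{2, \ldots, n\}$, with each point receiving exactly one hue. Both hues are non-azure, so every unboxed azure automatically satisfies the mismatched condition; and since no two hues share a color, no singleton hue can want to merge with another. Every such placement is therefore automatically a fixed point decoration, giving the count $\binom{n-1}{a}$.

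For the case when some $k_i > 0$, hence $\ell \geq 1$, the aim is to show $\mathcal{MD}(\mathbf{k}) = \emptyset$ via a capacity argument, which by Theorem \ref{thm:main2} yields zero. Fix any hue partition of the boxed azures $\{1,\ldots,\ell\}$ into $s$ blocks. The underlining step places $\ell - s$ azure hue copies on boxed azures (the $|B|-1$ largest elements of each block $B$). The remaining $s$ block-minima are unassigned, but since the crossed-out point $1$ must lie in one of these blocks and cannot receive a hue, only $s-1$ boxed azures remain available for further azure placements. As unboxed azures cannot receive azure hues without violating the mismatched condition, this yields a total capacity of $(\ell - s) + (s - 1) = \ell - 1$ slots for azure hues, whereas $\sum_{i=1}^{\ell} k_i \geq \ell$ by the convention. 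Hence no mismatched decoration exists.

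The main subtlety is the capacity argument above: the key observation is that the crossed-out point $1$ effectively wastes one of the $s$ smallest-of-block slots, reducing the azure capacity uniformly from $\ell$ to $\ell - 1$ across all hue partitions. This off-by-one deficit is exactly what forces the integral to vanish whenever any light $\psi$ class appears in the product.
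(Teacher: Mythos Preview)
Your proof is correct and follows essentially the same approach as the paper, which presents the argument informally via Example \ref{ex:Losev-Manin-fixed-points} and then states that the examples generalize. Your version is in fact more careful: the paper's example only illustrates the counting obstruction for a specific product, whereas you verify the capacity bound $\ell - 1 < \sum k_i$ uniformly over all hue partitions by tracking the crossed-out block minimum $1$.
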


\subsection{Compatibility with color merge}\label{subsec:compatibility-with-color-merge}

Whenever $m \geq 3$, there is a reduction map corresponding to merging colors,
\[
\rho : \Mbar_{0, [r_1, \ldots, r_m, r_{m+1}]} \to
\Mbar_{0, [r_1, \ldots, r_m + r_{m+1}]}.
\]
If $k_i = 0$ for all $i$ in the merged colors, we have
\[
\int_{\Mbar_{0, [r_1, \ldots, r_m, r_{m+1}]}} \psi_1^{k_1} \cdots \psi_n^{k_n}
=
\int_{\Mbar_{0, [r_1, \ldots, r_m + r_{m+1}]}}
\psi_1^{k_1} \cdots \psi_n^{k_n}
\]
by Corollary \ref{cor:color-clumping}. This identity is also immediate using decorations: there are no hues of colors $C^{(m)}$ or $C^{(m+1)}$ to assign to other numbers, and it makes no difference to the mismatching condition or the actions of $\underline{\varphi}$ and $\phihue$ if the numbers from $C^{(m)}$ and $C^{(m+1)}$ are placed in two rows or just one.

\subsection{One \texorpdfstring{$\psi$}{} class in each color and reduction to \texorpdfstring{$\Mbar_{0,n}$}{}}

Suppose we have a $\psi$ class product with at most one $\psi$ class with a nonzero exponent in each color.
Then the mismatched decorations must all have one singleton hue of each color, and so have no $m$ or $t$ as in Definition \ref{def:involution-2}, and so all mismatched decorations are automatically fixed point decorations.  Note that this means that we attain both the lower and upper bounds of Corollaries \ref{cor:lower-bound} and \ref{cor:upper-bound} in Section \ref{sec:cerberus} below, which coincide in this case.

As a special case, we can recover the known $\psi$ class intersection formula on $\Mbar_{0,n}$, namely, $$\int_{\Mbar_{0,n}}\psi_1^{k_1}\cdots \psi_n^{k_n} = \binom{n-3}{k_1, \ldots, k_n}$$ with $\sum k_i=n-3$.  In the language of decorations, there is one point of each color, and any number for which $k_i > 0$ has no unboxed numbers in its row; thus every decoration is mismatched, and as above they are all fixed points.  These are enumerated by distributing the $\psi$ class colors over the $n-3$ numbers that are not crossed out, which recovers the formula above.

\subsection{All \texorpdfstring{$\psi$}{} classes in a single color and heavy-light spaces}
\label{subsec:single-color-heavy-light}

Consider the case where all the $\psi$ classes in the intersection product \eqref{eqn:psi-product} are of a single color, without loss of generality $C^{(1)}$. As in Section \ref{subsec:compatibility-with-color-merge}, the distribution of the remaining colors then makes no difference to the mismatching condition or the actions of $\underline{\varphi}$ and $\varphi^{\mathrm{hue}}$. There is thus no loss of generality in taking the space to have three colors $\overline{M}_{[c_1, c_2, 1]}$ or to be a heavy-light space $\overline{M}_{[c_1, 1, \ldots, 1]}$.

If some number in color $C^{(1)}$ is unboxed, then by counting, there are no mismatched decorations and the sum yields $0$ after applying $\underline{\varphi}$. If instead every number in color $C^{(1)}$ is boxed, the mismatched condition is automatically satisfied; in this case, $\underline{\varphi}$ does nothing, and any cancellations are carried out by $\varphi^{\mathrm{hue}}$.

In the latter case, it is straightforward to see that the integral is always nonzero, by assigning each number $r$ in row $1$ itself as its hue (or a hue $C_{\{t\}}$ with $r \leq t$) and assigning all other hues arbitrarily. By Example \ref{ex:special-fixed-points}, every such assignment is a fixed point, leading to bounds
\[
\binom{n-3-r_1}{k_1-1, k_2-1, \ldots, k_{r_1}-1}
\leq
\int_{\overline{M}_{[r_1, \ldots]}} \psi_1^{k_1} \cdots \psi_{r_1}^{k_{r_1}}
\leq
\binom{n-3}{k_1, \ldots, k_r}.
\]
We do not know of any further simplifications to the count of fixed-point decorations in this case, since the requirement that no hue have permission to merge is delicate to count. It is perhaps surprising that integrals of `light' $\psi$ classes on heavy-light spaces exhibit this complexity.

\subsection{Border strip case}
The product
    \[        \int_{\overline{M}_{[r,1^{r+3}]}} (\psi_1 \cdots \psi_r)^2 = 1, 5, 61, 1379, 49946, \ldots \text{ for } r = 1, 2, \ldots \]
is known to be the number of tilings of a $2r \times r$ rectangle by length-$r$ border strips (OEIS \href{https://oeis.org/A115047}{A115047}). We initially observed this coincidence of numbers via fixed point decorations by computer calculation (up to $r=5$), and we thank Ian Cavey and Deniz Genlik for explaining this to us, as follows: by \cite{pandharipande_kappa}, we may reformulate the integral as $\int_{\overline{M}_{0, r+3}} \kappa_1^r$, which is equivalent to the normalized Weil--Petersson volume of $\overline{M}_{0,r+3}$ by \cite{kaufmann1996higher}, and which was first computed by Zograf
\cite{Zograf}. Then, by \cite{alexandersson-jordan}, this integral equals the number of border strip decompositions above.

It would be interesting to find a bijection between border strip tilings and fixed point decorations in this case.

\section{Condition for the existence of fixed points}
\label{sec:cerberus}

We now give necessary and sufficient conditions for the product \eqref{eqn:psi-product} of $\psi$ classes on a multicolored space to be nonzero. As above, let $\ell_j$ be the number of distinct $\psi$ classes of color $C^{(j)}$ appearing in the product, and let $k_{C^{(j)}}$ be the total of the exponents of color $C^{(j)}$, with $k_{C^{(1)}}+\cdots+k_{C^{(m)}}=n-3$. 

\begin{thm}\label{thm:nonzero}
The intersection product \eqref{eqn:psi-product} is nonzero if and only if the inequalities 
\begin{equation}\label{eqn:inequalities}
k_{C^{(j)}}\le \ell_j+n -3 - r_j,
\end{equation}
hold for all $j$ such that $k_{C^{(j)}} \ne 0$.
\end{thm}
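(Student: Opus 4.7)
The plan is to combine Theorem \ref{thm:main2} with Hall's Marriage Lemma. By Theorem \ref{thm:main2}, the integral is nonzero if and only if a fixed point decoration exists, so it suffices to characterize the existence of fixed points in terms of the inequalities \eqref{eqn:inequalities}.

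Necessity is a direct counting argument. A fixed point has the all-singleton hue partition, so its $k_{C^{(j)}}$ hues of color $C^{(j)}$ occupy $k_{C^{(j)}}$ distinct non-crossed-out slots. By the mismatched condition, an unboxed slot of color $C^{(j)}$ cannot accept a hue of color $C^{(j)}$; there are exactly $r_j - \ell_j$ such slots (for $j \leq 3$ with $\ell_j \geq 1$ the crossed-out number of color $C^{(j)}$ is itself boxed, which keeps the formula uniform), so $k_{C^{(j)}} \leq (n-3) - (r_j - \ell_j) = \ell_j + n - 3 - r_j$.

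For sufficiency, I plan to apply Hall's Marriage Lemma to the bipartite graph $G$ whose left side has one hue $C_{\{i\}}^{(c(i))}$ for each boxed number $i$ (crossed out or not), appearing with multiplicity $k_i$, and whose right side is the set of $n-3$ non-crossed-out slots. The edges are defined by: a hue $C_{\{i\}}^{(c(i))}$ is adjacent to a slot $s$ iff either $s$ is boxed with $s = i$, or $c(s) \ne c(i)$. A perfect matching in $G$ determines a mismatched decoration with all-singleton hues in which every boxed $r$ receives either $\{r\}$ itself or a hue of a different color, and by Example \ref{ex:special-fixed-points} such a decoration is automatically a fixed point, since the permission-to-merge obstruction requires a boxed number to carry a hue of its own color with a strictly smaller index.

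To verify Hall's condition, I take a subset $T$ of the left side and let $T'$ denote the underlying set of boxed numbers. If $T'$ spans at least two distinct colors, then every right-hand slot is adjacent to some hue of $T$ (the mismatched restriction excludes at most one color per slot), so $|N(T)| = n-3 \geq |T|$, using $|T| \leq \sum_i k_i = n - 3$. If $T'$ lies in a single color $C^{(j)}$, then $|N(T)|$ counts the non-crossed-out slots of color $\ne j$ together with the boxed non-crossed-out slots of color $j$ that belong to $T'$; a short calculation using $\sum_{i \in T'} k_i \leq k_{C^{(j)}} - (\ell_j - |T'|)$ (since each $k_i \geq 1$) together with the hypothesis $k_{C^{(j)}} \leq \ell_j + n - 3 - r_j$ yields Hall's inequality, with a minor case split on whether $T'$ contains the crossed-out boxed number of color $C^{(j)}$ when $j \leq 3$. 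The main obstacle is this single-color calculation, since this is where the theorem's inequality is applied at its tight extreme $T' = \{i : i \text{ boxed of color } C^{(j)}\}$; the asymmetry between $j \leq 3$ and $j > 3$ is bookkeeping but is where one must be careful with the count of crossed-out slots.
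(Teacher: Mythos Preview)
Your proof is correct and follows essentially the same approach as the paper: reduce via Theorem~\ref{thm:main2} to the existence of a fixed point, then use Hall's Marriage Lemma together with Example~\ref{ex:special-fixed-points} to characterize when one exists. The only organizational difference is that the paper first passes through the coarser notion of a \emph{mismatched coloring} (Lemma~\ref{lem:surjection-colorings}) and applies Hall to a graph whose right side carries colors with multiplicity, whereas you build the same restriction directly into a hue-level graph; the Hall condition in both setups reduces to the same single-color inequality~\eqref{eqn:inequalities}.
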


\begin{remark}[Inequalities for all $j$ and Losev-Manin space]
    If $k_{C^{(j)}} = 0$, then necessarily $\ell_j = 0$, and so the inequality \eqref{eqn:inequalities} becomes
    \[
    0 \leq n-3-r_j,
    \]
    which is nearly always satisfied, so there is (almost) no harm in taking \eqref{eqn:inequalities} for all $j$. The exceptional case is Losev-Manin space, where $r_j = n-2$ for the `light' color and the inequality \eqref{eqn:inequalities} is $k_{C^{(j)}} \leq \ell_j - 1$ and is never satisfied. But fixed point decorations do in fact exist if (and only if) $k_{C^{(j)}} = 0 = \ell_j$; see Example \ref{ex:Losev-Manin-fixed-points}. A reformulation that holds in all cases is: for all $j$,
    \begin{equation}\label{eqn:inequalities-valid-for-LM}
k_{C^{(j)}}\le \max\left(\ell_j+n -3 - r_j, 0\right).
\end{equation}
\end{remark}

These inequalities are closely related to the existence of certain simplified assignments of colors (rather than hues) to numbers, similar to fixed point decorations.

\begin{defn}
    A {\bf mismatched coloring} of type $\mathbf{k}$ is an assignment of colors to the non-crossed-out numbers, such that a total of $k_{C^{(j)}}$ numbers are assigned the color $C^{(j)}$, for each $j$, and no unboxed number is assigned its own color.
\end{defn}

\begin{lemma} \label{lem:surjection-colorings}
    There is a surjection from fixed point decorations to mismatched colorings. In particular, a fixed point decoration exists if and only if a mismatched coloring exists.
\end{lemma}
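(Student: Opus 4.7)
The plan is to take $\Psi$ to be the forgetful map sending a fixed point decoration $D$ to the coloring $(N_j)_j$, where $N_j$ consists of those non-crossed-out numbers whose hue in $D$ is of color $C^{(j)}$. I would first check that $\Psi$ is well-defined: writing $B_j$ for the set of boxed numbers of color $C^{(j)}$, the hue multiplicities give $|N_j| = \sum_{i \in B_j} k_i^{(j)} = k_{C^{(j)}}$, and the mismatched condition transfers because any unboxed number of color $C^{(j)}$ has in $D$ a hue not of color $C^{(j)}$, hence lies in $N_{j'}$ for some $j' \neq j$.

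For surjectivity I would construct a canonical preimage using the sufficient condition noted in Example \ref{ex:special-fixed-points}. Given a mismatched coloring $(N_j)_j$, for each color $C^{(j)}$ and every $b \in B_j \cap N_j$, I assign the number $b$ the singleton hue $\{b\}$. The remaining multiplicities to fill are $k_b^{(j)} - 1$ for $b \in B_j \cap N_j$ and $k_b^{(j)}$ for $b \in B_j \setminus N_j$; they sum to $k_{C^{(j)}} - |N_j \cap B_j| = |N_j \setminus B_j|$, so these hues can be distributed arbitrarily among the numbers in $N_j \setminus B_j$. The resulting decoration has underlying coloring $(N_j)_j$ and correct hue multiplicities, and its mismatched property is inherited directly from that of $(N_j)_j$.

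The key step is to verify that this decoration is a fixed point. By construction the hue partition is all singletons, so by Definition \ref{def:fixedpoint-decoration} it suffices to show that no singleton hue $\{t\}$ has permission to merge with another singleton hue $\{r\}$. Such a configuration would require a boxed $r$ of color $C^{(j)}$ to carry a hue $\{t\}$ with $t \in B_j$ and $t < r$; however, in the proposed construction, the only boxed $r \in B_j$ assigned a hue of its own color $C^{(j)}$ is an element of $N_j \cap B_j$, and such an $r$ is given the hue $\{r\}$ itself. Thus $t < r$ never occurs, the ``wants to merge'' scenario does not arise in the first place, and the permission-to-merge condition holds vacuously. The ``in particular'' statement of the lemma is then an immediate corollary of surjectivity.

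I do not expect any significant obstacle here: by engineering each boxed number that remains in its own color to receive itself as hue, the entire number-chain and permission-list machinery of Section \ref{sec:SRI2} is bypassed. The only subtle point I would flag explicitly is the arithmetic check that when boxed numbers $b \in B_j$ are pushed into other colors by the mismatched coloring, the full multiplicity $k_b^{(j)}$ of the hue $\{b\}$ still fits within $N_j \setminus B_j$, which is precisely the equality $k_{C^{(j)}} - |N_j \cap B_j| = |N_j \setminus B_j|$ used above.
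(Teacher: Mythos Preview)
Your proposal is correct and follows essentially the same approach as the paper: both define the forgetful map from hues to colors, then construct a section by assigning each boxed number landing in its own color its own singleton hue and distributing the remaining hues arbitrarily, invoking Example~\ref{ex:special-fixed-points} to conclude the result is a fixed point. Your write-up is slightly more explicit about the arithmetic check and the verification that no ``wants to merge'' situation arises, but the underlying construction is identical.
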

\begin{cor} \label{cor:lower-bound}
    The integral \eqref{eqn:psi-product} is bounded below by the number of mismatched colorings.
\end{cor}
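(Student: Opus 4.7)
The plan is to derive the corollary directly from Theorem \ref{thm:main2} and Lemma \ref{lem:surjection-colorings}. By Theorem \ref{thm:main2}, the intersection product \eqref{eqn:psi-product} equals the number of fixed point decorations of type $\mathbf{k}$. By Lemma \ref{lem:surjection-colorings}, there is a surjection from the set of fixed point decorations onto the set of mismatched colorings of type $\mathbf{k}$. Since any surjection between finite sets implies that the domain has cardinality at least that of the codomain, we conclude that the integral is bounded below by the number of mismatched colorings.

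There is essentially no further obstacle, since both inputs have already been established. The combinatorial content is absorbed into Theorem \ref{thm:main2} (via the two sign-reversing involutions $\underline{\varphi}$ and $\phihue$ of Section \ref{sec:SRI2}) and into the construction of the surjection asserted in Lemma \ref{lem:surjection-colorings}. In the actual write-up, one simply cites these two results and records the resulting cardinality inequality; the same framework then yields the companion Corollary \ref{cor:upper-bound} from the opposite direction, presumably via an injection or analogous construction producing a superset of decorations that still surject onto the fixed points.

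As a sanity check on the shape of the surjection that Lemma \ref{lem:surjection-colorings} must produce: the natural candidate is the forgetful map that records, for each non-crossed-out number, only the underlying color $C^{(j)}$ of its hue $C^{(j)}_{\{t\}}$, discarding the singleton label $\{t\}$. The mismatched condition on decorations (unboxed numbers carrying hues of a color different from their own) projects directly onto the mismatched condition on colorings, and the color multiplicities match because $\sum_i k_i = k_{C^{(j)}}$ where $i$ ranges over boxed numbers of color $C^{(j)}$. Surjectivity then follows from an explicit lift, such as the ones highlighted in Example \ref{ex:special-fixed-points}, in which every boxed number carries its own singleton hue and the no-permission-to-merge condition is satisfied automatically. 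With such a lift in hand, the corollary is immediate.
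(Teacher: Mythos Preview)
Your proposal is correct and matches the paper's approach exactly: the paper states Corollary~\ref{cor:lower-bound} immediately after Lemma~\ref{lem:surjection-colorings} with no separate proof, treating it as the obvious consequence of the surjection plus Theorem~\ref{thm:main2}. Your sanity-check description of the surjection (forget hues to colors; lift by assigning each boxed number its own hue as in Example~\ref{ex:special-fixed-points}) is precisely the content of the paper's proof of Lemma~\ref{lem:surjection-colorings}.
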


\begin{proof}[Proof of Lemma \ref{lem:surjection-colorings}]
    Let $D$ be a fixed point decoration. Forgetting the distinctions between hues and remembering only the underlying colors gives a mismatched coloring.

    Conversely, consider an arbitrary mismatched coloring. We define a decoration by specializing the color assignments into hues, as follows. We differentiate the colors into singleton hues and, whenever a (boxed) number $i$ was assigned its own color, we now assign it its own hue. We specialize all other colors to hues arbitrarily, such that each hue $C_i^{(j)}$ occurs $k_i^{(j)}$ times. Any such assignment yields a fixed point decoration, since every boxed number is assigned either its own hue, or a hue that does not match its color (see Example \ref{ex:special-fixed-points}).
\end{proof}

\begin{proof}[Proof of Theorem \ref{thm:nonzero}]
    By Lemma \ref{lem:surjection-colorings}, a fixed point decoration exists if and only if a mismatched coloring exists. To characterize the existence of a mismatched coloring, we use Hall's Marriage Lemma. We consider the bipartite graph with the non-crossed out numbers as its left set and the available colors (with multiplicity) as its right set, and where each number is connected by an edge to all the colors it can be assigned: any color if it is boxed, any color besides itself if it is unboxed. Hall's Marriage Lemma states that there is a perfect matching, and hence a mismatched coloring, if and only if the neighborhood of any set $S$ on the right has at least as many elements as $S$. 
    
    If $S$ contains vertices of two or more colors, its neighborhood is the entire left-hand side of the graph, so the inequality is trivially satisfied. Suppose now that $S$ has vertices of a single color $C^{(j)}$ appearing with $k_{C^{(j)}} \ne 0$. We have $|S| \leq k_{C^{(j)}}$. For the first three colors (those with a crossed-out number), the neighborhood has size $(\ell_j - 1) + (n - 2 - r_j)$. For the other colors, the neighborhood has size $\ell_j + (n-3-r_j)$ (i.e. the same size). Thus, a matching exists if and only if, whenever $k_{C^{(j)}} \ne 0$, the inequality $k_{C^{(j)}} \leq \ell_j + n - 3 - r_j$ holds.
\end{proof}

\begin{cor}\label{cor:upper-bound}
    Let $G$ be the bipartite graph from the proof of Theorem \ref{thm:nonzero}. Then the integral \eqref{eqn:psi-product} is bounded above by the number of matchings on $G$.
\end{cor}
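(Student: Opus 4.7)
The plan is to establish the chain of inequalities
\[
\text{(integral \eqref{eqn:psi-product})} \;=\; |\{\text{fixed point decorations}\}| \;\leq\; |\{\text{mismatched decorations with singleton hues}\}| \;\leq\; |\{\text{perfect matchings of } G\}|,
\]
with the first equality coming from Theorem \ref{thm:main2}. Interpreting ``matchings'' as perfect matchings (the natural output of the Hall-type argument in the proof of Theorem \ref{thm:nonzero}, which produces a bijection between the two vertex sets, each of size $n-3$ in the nonzero case), the corollary then follows from the two inequalities.

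For the first inequality, I would observe that by Definition \ref{def:fixedpoint-decoration} every fixed point decoration has hue partition consisting entirely of singletons and is therefore in particular a mismatched decoration with singleton hues. This is an inclusion of sets, hence the corresponding inequality of cardinalities.

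For the second inequality, I would organize both sides according to the underlying mismatched coloring. A mismatched decoration with singleton hues restricts to a mismatched coloring by forgetting the distinction between hues of the same color; likewise, a perfect matching of $G$ restricts to a mismatched coloring by forgetting which copy of each color each number is matched to. For a fixed mismatched coloring $c$, the preimage in each case has cardinality
\[
|M(c)| = \prod_{j=1}^{m} \binom{k_{C^{(j)}}}{k_1^{(j)}, k_2^{(j)}, \ldots, k_{\ell_j}^{(j)}}
\qquad \text{and} \qquad
|P(c)| = \prod_{j=1}^{m} k_{C^{(j)}}!,
\]
since $|M(c)|$ counts distributions of the $k_{C^{(j)}}$ numbers colored $C^{(j)}$ into hue classes of the prescribed sizes $k_i^{(j)}$, while $|P(c)|$ counts the ways to assign those same numbers bijectively to the $k_{C^{(j)}}$ distinguishable copies of $C^{(j)}$ on the right side of $G$. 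Since each multinomial coefficient is bounded above by $k_{C^{(j)}}!$, we have $|M(c)| \leq |P(c)|$ for every $c$; summing over $c$ yields the inequality.

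I do not anticipate any substantive obstacle here: the argument is essentially bookkeeping. The only mild subtlety is confirming that the right-hand side of $G$ consists of distinguishable copies of each color with multiplicity $k_{C^{(j)}}$, so that $|P(c)| = \prod_j k_{C^{(j)}}!$ as claimed; this is built into the construction of $G$ in the proof of Theorem \ref{thm:nonzero}.
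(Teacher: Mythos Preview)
Your proof is correct and follows essentially the same route as the paper: both arguments embed fixed point decorations into the set of mismatched decorations with singleton hues, and then compare these to perfect matchings of $G$. Your version is actually more careful than the paper's two-sentence proof, since you make explicit (via the fiber-counting over mismatched colorings and the inequality $\binom{k_{C^{(j)}}}{k_1^{(j)},\ldots,k_{\ell_j}^{(j)}}\le k_{C^{(j)}}!$) why passing from hue assignments to matchings can only increase the count---a point the paper's phrase ``view the set of fixed point decorations as a subset of the set of matchings'' glosses over.
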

\begin{proof}
    We may identify the vertices of the right-hand set of $G$ with the boxed, non-crossed-out numbers from $\{1, \ldots, n\}$, i.e. as singleton hues. We may thus view the set of fixed point decorations as a subset of the set of matchings on $G$, namely those satisfying the final condition of Definition \ref{def:fixedpoint-decoration}. 
\end{proof}

Note that, in our bipartite graph, if we treat vertices of the same color as indistinguishable in $G$ and consider the corresponding matchings equivalent, we instead obtain the lower bound of Corollary \ref{cor:lower-bound}.  In other words, the number of fixed points is bounded below by the number of mismatched \textit{colorings}, and bounded above by the number of mismatched \textit{decorations} with all singleton blocks.

\begin{example}
    Consider an intersection product \eqref{eqn:psi-product} in which $\ell_j \leq 1$ for every $j$, that is, at most one $\psi$ class occurs in each color. Then the lower and upper bounds of Corollaries \ref{cor:lower-bound} and \ref{cor:upper-bound} coincide, so the integral is the number of mismatched colorings (equivalently, matchings of $G$).
\end{example}

\begin{remark}[Comparison to Kapranov degrees and cross-ratio degrees] \label{rmk:comparison-to-BELL-SIL}
    It is interesting to compare Theorem \ref{thm:nonzero} and Corollaries \ref{cor:lower-bound}--\ref{cor:upper-bound} to \cite[Theorem 1.1]{silversmith2021crossratio} and \cite[Theorem C]{brakensiek}, which give upper bounds for certain products of pullbacks of $\psi$ classes in terms of matchings of an associated bipartite graph, just as we find both upper and lower bounds in terms of matchings. Our inequalities \eqref{eqn:inequalities} are likewise similar in spirit to the Cerberus condition \cite[Theorem A]{brakensiek} for positivity of products of pullbacks of $\psi$ classes on $\overline{M}_{0, n}$.
\end{remark}

\bibliography{refs}

@book{acgh2013,
  title={Geometry of Algebraic Curves},
  author={Arbarello, E. and Cornalba, M. and Griffiths, P. and Harris, J.D.},
  number={v. 1},
  isbn={9781475753233},
  series={Grundlehren der mathematischen Wissenschaften},
  url={https://books.google.com/books?id=LanxBwAAQBAJ},
  year={2013},
  publisher={Springer New York}
}

@article{alexandersson-jordan,
	title="{Enumeration of Border-Strip Decompositions and Weil-Petersson Volumes}",
	author={Alexandersson, Per and Jordan, Linus},
	journal={Journal of Integer Sequences},
	volume={22},
	number={19.4.5},
	year={2019},
}

@article{alexeevguypsi,
	title={Moduli of weighted stable maps and their gravitational descendants},
	author={Alexeev, Valery and Guy, G. Michael},
	journal={Journal of the Institute of Mathematics of Jussieu},
	volume={7},
	number={3},
	pages={425--456},
	year={2008},
	note={MR 2427420}
}

@article{bc_hassett,
	author = {Blankers, Vance and Cavalieri, Renzo},
    title = "{Wall-Crossings for Hassett Descendant Potentials}",
    journal = {International Mathematics Research Notices},
    year = {2020},
    month = {05},
    issn = {1073-7928},
    pages = {898–927},
    doi = {10.1093/imrn/rnaa077},
    url = {https://doi.org/10.1093/imrn/rnaa077},
    note = {rnaa077},
    eprint = {https://academic.oup.com/imrn/advance-article-pdf/doi/10.1093/imrn/rnaa077/33178312/rnaa077.pdf},
}

@article{C,
  title={The Cox ring of {$\mgbar_{0,6}$}},
  author={Castravet, Ana-Maria},
  journal={Transactions of the American Mathematical Society},
  volume={361},
  number={7},
  pages={3851--3878},
  year={2009}
}

@unpublished{cavalieri2016,
  title={Moduli spaces of pointed rational curves},
  author={Renzo Cavalieri},
  url={http://www.mast.queensu.ca/~ggsmith/Fields/cavalieriExercises.pdf},
  note={Combinatorial Algebraic Geometry summer school},
  year={2016}
}

@article{CGM,
title = {Projective embeddings of $\overline{M}_{0,n}$ and parking functions},
journal = {Journal of Combinatorial Theory, Series A},
volume = {182},
pages = {105471},
year = {2021},
author = {Renzo Cavalieri and Maria Gillespie and Leonid Monin},
keywords = {Parking functions, Moduli of curves, Multidegree, Intersection theory}
}

@article{DeConciniProcesi,
	title={Wonderful models of subspace arrangements},
	author={De Concini, Corrado and Procesi, Claudio},
	journal={Selecta Mathematica},
	volume={1},
	pages={459--494},
	year={1995}
}

@article{hassett2003,
  title={Moduli spaces of weighted pointed stable curves},
  author={Brendan Hassett},
  journal={Advances in Mathematics},
  volume={173},
  number={2},
  year={2003},
  pages={316--352}
}

@book{hm1998,
  title={Moduli of curves},
  author={Joe Harris and Ian Morrison},
  series={Graduate Texts in Mathematics},
  volume={187},
  publisher={Springer},
  year={1998}
}

@inproceedings{Ka2,
  title={Chow quotients of {G}rassmannians. {I}},
  author={Kapranov, Mikhail M},
  booktitle={IM Gel'fand Seminar},
  volume={16},
  pages={29--110},
  year={1993}
}

@article{losevmanin2000,
author = "Losev, A. and Manin, Y.",
doi = "10.1307/mmj/1030132728",
fjournal = "The Michigan Mathematical Journal",
journal = "Michigan Math. J.",
number = "1",
pages = "443--472",
publisher = "University of Michigan, Department of Mathematics",
title = "New moduli spaces of pointed curves and pencils of flat connections.",
url = "http://dx.doi.org/10.1307/mmj/1030132728",
volume = "48",
year = "2000"
}

@article{smyth_zstable,
   author = {Smyth, David},
   title = {Towards a classification of modular compactifications of $\mathcal{M}_{g,n}$},
   journal = {Inventiones Mathematicae},
   volume = {192},
   number = {2},
   year = {2013},
   month={May},
   pages={459-503}
}

@unpublished{k:psi,
	Author = {Joachim Kock},
	Note = {Notes. http://mat.uab.es/$\sim $kock/GW/notes/psi-notes.pdf},
	Title = {Notes on psi classes},
	Year = {2001}}

@article {GGL-hyperplanes,
    AUTHOR = {Gillespie, Maria and Griffin, Sean T. and Levinson, Jake},
     TITLE = {Degenerations and multiplicity-free formulas for products of
              {$\psi $} and {$\omega $} classes on {$\overline{M}_{0,n}$}},
   JOURNAL = {Math. Z.},
  FJOURNAL = {Mathematische Zeitschrift},
    VOLUME = {304},
      YEAR = {2023},
    NUMBER = {4},
     PAGES = {56},
      ISSN = {0025-5874,1432-1823},
   MRCLASS = {14H10 (05A05 05C05 14C17 14N10)},
  MRNUMBER = {4613449},
       DOI = {10.1007/s00209-023-03313-7},
       URL = {https://doi.org/10.1007/s00209-023-03313-7},
}

@article {GGL-tournaments,
    AUTHOR = {Gillespie, Maria and Griffin, Sean T. and Levinson, Jake},
     TITLE = {Lazy tournaments and multidegrees of a projective embedding of
              {$\overline M_{0,n}$}},
   JOURNAL = {Comb. Theory},
  FJOURNAL = {Combinatorial Theory},
    VOLUME = {3},
      YEAR = {2023},
    NUMBER = {1},
     PAGES = {Paper No. 3, 26},
      ISSN = {2766-1334},
   MRCLASS = {05E14 (05A19 05C05 05C85 14H10 14N10)},
  MRNUMBER = {4565290},
       DOI = {10.5070/c63160416},
       URL = {https://doi.org/10.5070/c63160416},
}

@article {silversmith2021crossratio,
    AUTHOR = {Silversmith, Rob},
     TITLE = {Cross-ratio degrees and perfect matchings},
   JOURNAL = {Proc. Amer. Math. Soc.},
  FJOURNAL = {Proceedings of the American Mathematical Society},
    VOLUME = {150},
      YEAR = {2022},
    NUMBER = {12},
     PAGES = {5057--5072},
      ISSN = {0002-9939,1088-6826},
   MRCLASS = {14N10 (05C30 14H10 14N35 14T15)},
  MRNUMBER = {4494586},
       DOI = {10.1090/proc/16016},
       URL = {https://doi.org/10.1090/proc/16016},
}

@article {pandharipande_kappa,
    AUTHOR = {Pandharipande, Rahul},
     TITLE = {The {$\varkappa$} ring of the moduli of curves of compact
              type},
   JOURNAL = {Acta Math.},
  FJOURNAL = {Acta Mathematica},
    VOLUME = {208},
      YEAR = {2012},
    NUMBER = {2},
     PAGES = {335--388},
      ISSN = {0001-5962},
   MRCLASS = {14H10},
  MRNUMBER = {2931383},
MRREVIEWER = {Hsian-Hua Tseng},
       DOI = {10.1007/s11511-012-0078-2},
       URL = {https://doi.org/10.1007/s11511-012-0078-2},
}

@article{kaufmann1996higher,
  title={Higher Weil-Petersson volumes of moduli spaces of stable n-pointed curves},
  author={Kaufmann, Ralph and Manin, Yu and Zagier, D},
  journal={Communications in mathematical physics},
  volume={181},
  number={3},
  pages={763--787},
  year={1996},
  publisher={Springer}
}

@article{fry2019tropical,
      title={Tropical Moduli Space of Rational Graphically Stable Curves}, 
      author={Andy Fry},
      year={2023},
      primaryClass={math.CO},
      journal = {Electronic Journal of Combinatorics},
      volume ={30},
      issue  = {4},
      pages = {P4.44},
      doi = {https://doi.org/10.37236/11337}
}

@article{vakil08,
	title = {The moduli space of curves and Gromov-Witten theory},
	author = {Ravi Vakil},
	journal = {Enumerative invariants in algebraic geometry and string theory, Lecture Notes in Math},
	volume = {1947},
	issue = {25},
	pages = {143--198},
	numpages = {4},
	year = {2008},
	publisher = {Springer, Berlin},
	doi = {MR 2493586 (2010c:14065},
}

@misc{BlankersBozlee,
      title={On compactifications of $\mathcal{M}_{g,n}$ with colliding markings}, 
      author={Vance Blankers and Sebastian Bozlee},
      year={2022},
      eprint={2208.09745},
      archivePrefix={arXiv},
      primaryClass={math.AG},
      url={https://arxiv.org/abs/2208.09745}, 
}

@article{brakensiek,
    author = {Brakensiek, Joshua and Eur, Christopher and Larson, Matt and Li, Shiyue},
    title = {Kapranov Degrees},
    journal = {International Mathematics Research Notices},
    volume = {2025},
    number = {20},
    pages = {rnaf306},
    year = {2025},
    month = {10},
    issn = {1073-7928},
    doi = {10.1093/imrn/rnaf306},
    url = {https://doi.org/10.1093/imrn/rnaf306},
    eprint = {https://academic.oup.com/imrn/article-pdf/2025/20/rnaf306/64612349/rnaf306.pdf},
}

@article{weighted, 
title={Moduli spaces of rational weighted stable curves and tropical geometry}, 
volume={4}, DOI={10.1017/fms.2016.7}, 
journal={Forum of Mathematics, Sigma}, 
author={Cavalieri, Renzo and Hampe, Simon and Markwig, Hannah and Ranganathan, Dhruv}, year={2016}, 
pages={e9}
}

@book{SaganBook,
  title={Combinatorics: The Art of Counting},
  author={Sagan, B.E.},
  isbn={9781470462802},
  series={Graduate studies in mathematics},
  url={https://books.google.com/books?id=DDAjzgEACAAJ},
  year={2020},
  publisher={American Mathematical Society}
}

@misc{Reimer-Berg,
      title={Insertion algorithms and pattern avoidance on trees arising in the Kapranov embedding of $\overline{M}_{0,n+3}$}, 
      author={Andrew Reimer-Berg},
      year={2025},
      eprint={2504.17098},
      archivePrefix={arXiv},
      primaryClass={math.CO},
      url={https://arxiv.org/abs/2504.17098}, 
}

@misc{dastidar2022matroidpsiclasses,
      title={Matroid psi classes}, 
      author={Jeshu Dastidar and Dustin Ross},
      year={2022},
      eprint={2102.08425},
      archivePrefix={arXiv},
      primaryClass={math.CO},
      url={https://arxiv.org/abs/2102.08425}, 
}

@article{Joyal,
title = {Une théorie combinatoire des séries formelles},
journal = {Advances in Mathematics},
volume = {42},
number = {1},
pages = {1-82},
year = {1981},
issn = {0001-8708},
doi = {https://doi.org/10.1016/0001-8708(81)90052-9},
url = {https://www.sciencedirect.com/science/article/pii/0001870881900529},
author = {André Joyal}
}

@article{KKL21,
title = {Chow rings of heavy/light Hassett spaces via tropical geometry},
journal = {Journal of Combinatorial Theory, Series A},
volume = {178},
pages = {105348},
year = {2021},
issn = {0097-3165},
doi = {https://doi.org/10.1016/j.jcta.2020.105348},
url = {https://www.sciencedirect.com/science/article/pii/S0097316520301400},
author = {Siddarth Kannan and Dagan Karp and Shiyue Li},
keywords = {Tropical geometry, Moduli spaces of curves, Matroid, Chow ring},
}

@article{bernardi,
title = {Combinatorial reciprocity for the chromatic polynomial and the chromatic symmetric function},
journal = {Discrete Mathematics},
volume = {343},
number = {10},
pages = {111989},
year = {2020},
issn = {0012-365X},
doi = {https://doi.org/10.1016/j.disc.2020.111989},
url = {https://www.sciencedirect.com/science/article/pii/S0012365X20301758},
author = {Olivier Bernardi and Philippe Nadeau}
}

@inproceedings{Zograf,
    author = {Peter Zograf},
    title = {The {W}eil--{P}etersson volume of the moduli space of punctured spheres},
    booktitle = {Mapping class groups and moduli spaces of Riemann surfaces},
    series = {Contemp. Math.},
    volume = {150},
    publisher = {Amer. Math. Soc.},
    location = {Göttingen, 1991/Seattle, WA, 1991)},
    pages = {367--372},
    year = {1993},
    DOI = {10.1090/conm/150/01300}
}
\bibliographystyle{amsalpha}

\end{document}